\DeclareSymbolFont{cyrletters}{OT2}{wncyr}{m}{n}
\DeclareMathSymbol{\Sha}{\mathalpha}{cyrletters}{"58}
\newtheorem{theoA}{Theorem}
\newtheorem*{coro*}{Corollary}
\newtheorem*{conj*}{Conjecture}
\newtheorem*{lemm*}{Lemma}
\newcommand{\smallmat}[4]{\bigl(\begin{smallmatrix}#1&#2\\#3&#4\end{smallmatrix}\bigr)}
\providecommand{\twomat}[4]{\left(\begin{array}{cc}#1&#2\\#3&#4\end{array}\right)}
\providecommand{\smalltwomat}[4]{\left(\begin{smallmatrix}#1&#2\\#3&#4\end{smallmatrix}\right)}
\theoremstyle{definition}
\theoremstyle{remark}
\newtheorem{remark*}{Remark}
\numberwithin{equation}{subsection}
\numberwithin{table}{subsection}
\newcommand{\Div}{\mathrm{Div}}
\newcommand{\ot}{\otimes}
\newcommand{\ts}{\times}
\newcommand{\beq}{\begin{equation}\begin{aligned}}
\newcommand{\eeq}{\end{aligned}\end{equation}}
\newcommand{\beqq}{\begin{equation*}\begin{aligned}}
\newcommand{\eeqq}{\end{aligned}\end{equation*}}
\newcommand{\lb}[1]{\label{#1}}
\newcommand{\one}{\mathbf{1}}
\newcommand{\Q}{\mathbf{Q}}
\newcommand{\qqq}{\mathbf{q}}
\newcommand{\GL}{\mathrm{GL}}
\renewcommand{\H}{\mathrm{H}}
\newcommand{\X}{\mathscr{X}}
\newcommand{\cE}{\mathscr{E}}
\newcommand{\cV}{\mathscr{V}}
\newcommand{\cP}{\mathscr{P}}
\newcommand{\cL}{\mathscr{L}}
\newcommand{\R}{\mathbf{R}}
\newcommand{\Z}{\mathbf{Z}}
\newcommand{\frakp}{\mathfrak{p}}
\newcommand{\ad}{\mathrm{ad}}
\newcommand{\into}{\hookrightarrow}
\newcommand{\Y}{\mathscr{Y}}
\newcommand{\la}{\langle}
\newcommand{\ra}{\rangle}
\newcommand{\Nm}{\mathrm{N}}
\newcommand{\rN}{\mathrm{N}}
\newcommand{\rT}{\mathrm{T}}
\newcommand{\rj}{\mathrm{j}}
\newcommand{\al}{\alpha}
\newcommand{\tht}{\theta}
\newcommand{\lm}{\lambda}
\newcommand{\sg}{\sigma}
\newcommand{\Sg}{\Sigma}
\newcommand{\calI}{\mathscr{I}}
\newcommand{\cI}{\mathscr{I}}
\newcommand{{\calG}}{\mathscr{G}}
\newcommand{\cR}{\mathscr{R}}
\newcommand{\calS}{\mathscr{S}}
\newcommand{\cM}{\mathscr{M}}
\newcommand{\cD}{\mathscr{D}}
\newcommand{\bcalS}{\baar{\mathscr{S}}}
\newcommand{\lf}{\ell_{\vphi^{p},\alpha}}
\newcommand{\bC}{\mathbf{C}}
\newcommand{\N}{\mathbf{N}}
\newcommand{\OO}{\mathscr{O}}
\newcommand{\A}{\mathbf{A}}
\newcommand{\bks}{\backslash}
\newcommand{\baar}{\overline}
\newcommand{\cd}{\cdot}
\newcommand{\eps}{\varepsilon}
\newcommand{\vphi}{\varphi}
\newcommand{\vpi}{\varpi}
\newcommand{\Up}{\mathrm{U}}
\newcommand{\wtil}{\widetilde}
\newcommand{\B}{\mathbf{B}}
\newcommand{\vol}{\mathrm{vol}}
\newcommand{\Tr}{\mathrm{Tr}}
\newcommand{\Gal}{\mathrm{Gal}}
\newcommand{\Ker}{\mathrm{Ker}\,}
\newcommand{\Hom}{\mathrm{Hom}\,}
\newcommand{\End}{\mathrm{End}\,}
\newcommand{\llb}{\llbracket}
\newcommand{\rrb}{\rrbracket}
\newcommand{\Spec}{\mathrm{Spec}\,}
\newcommand{\tZ}{\wtil{Z}}
\newsavebox\tempbox
\let\svwidetilde\widetilde
\renewcommand\widetilde[1]{\sbox\tempbox{$#1$}\svwidetilde{\usebox{\tempbox}}}
\def\Xint#1{\mathchoice
      {\XXint\displaystyle\textstyle{#1}}%
      {\XXint\textstyle\scriptstyle{#1}}%
      {\XXint\scriptstyle\scriptscriptstyle{#1}}%
      {\XXint\scriptscriptstyle\scriptscriptstyle{#1}}%
      \!\int}
   \def\XXint#1#2#3{{\setbox0=\hbox{$#1{#2#3}{\int}$}
        \vcenter{\hbox{$#2#3$}}\kern-.5\wd0}}
   \def\dashint{\Xint-}
\title[The $p$-adic Gross--Zagier formula at nonsplit primes]{The $p$-adic Gross--Zagier formula on Shimura curves, II:\\nonsplit primes}
\author{Daniel Disegni} 
\address{Department of Mathematics, Ben-Gurion University of the Negev, Be'er Sheva 84105, Israel}
\email{disegni@bgu.ac.il}
\begin{document}
\begin{abstract}
The formula of the title relates $p$-adic heights of Heegner points and derivatives of $p$-adic $L$-functions. It was originally proved by Perrin-Riou for $p$-ordinary elliptic curves over the rationals,  under the assumption that $p$ splits in the relevant quadratic  extension.
We remove  this assumption,  in the more general setting of  Hilbert-modular abelian varieties.
\end{abstract}
\thanks{Research supported by ISF grant 1963/20}

\maketitle
\tableofcontents

\section{Introduction and statement of the main result}
The $p$-adic Gross--Zagier formula of Perrin-Riou relates $p$-adic heights of Heegner points and derivatives of $p$-adic $L$-functions. In its original form \cite{PR}, it concerns (modular) elliptic curves over $\Q$, and it is proved under two main assumptions: first, that the elliptic curve  is $p$-ordinary; second, that $p$ splits in the field $E$ of complex multiplications of the Heegner points. The  formula has  applications to both the  $p$-adic and the classical Birch and Swinnerton-Dyer conjecture.

{\begin{figure}[h]
$$\includegraphics[width=.37\textwidth]{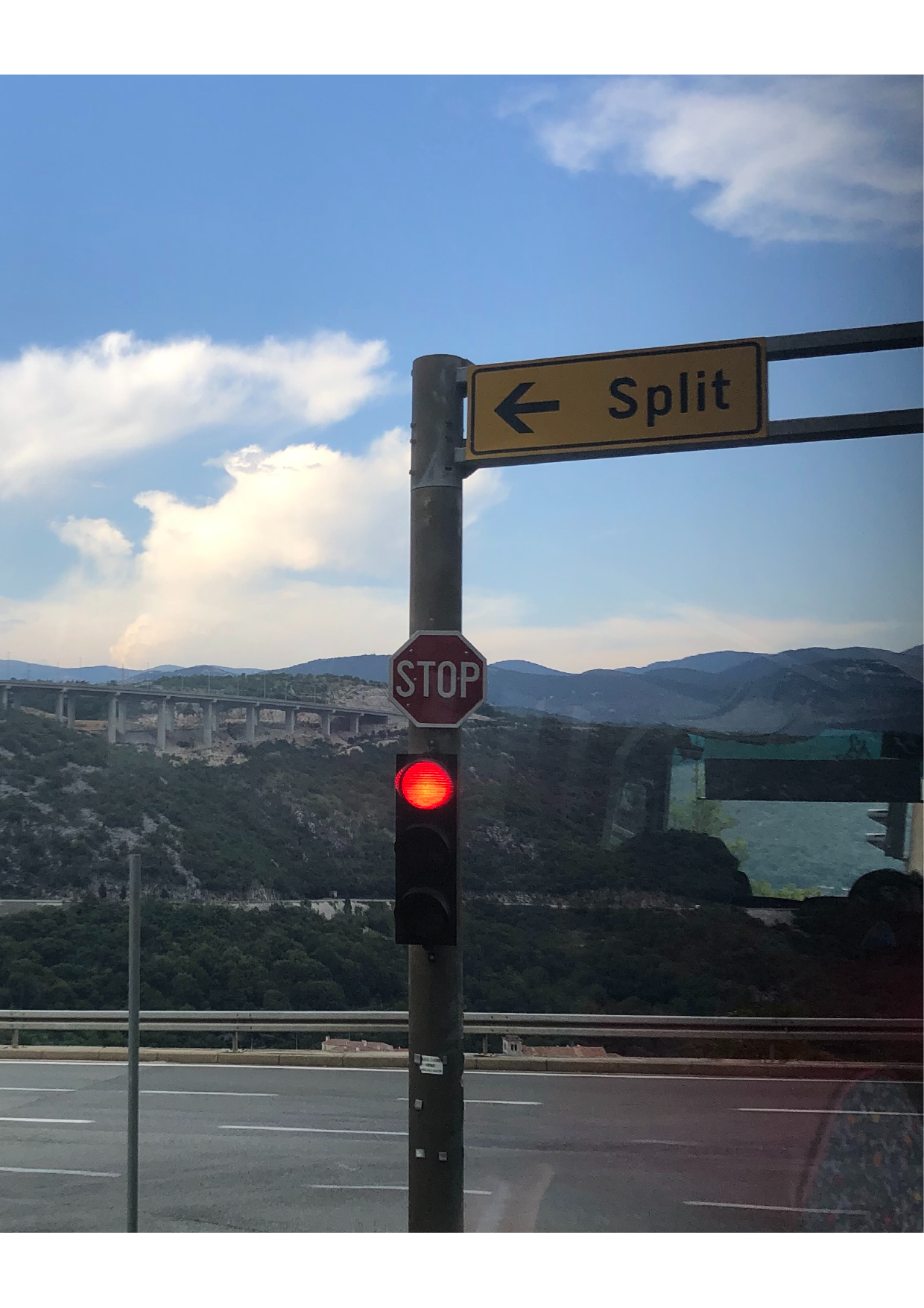} 
$$
\caption{A road sign in Croatia.}
\end{figure} }

The first assumption was removed by Kobayashi \cite{kobayashi} (see also \cite{BPS}). The purpose of this work is to remove the second assumption.

We work in the context of \cite{1}, which will enable us in \cite{univ} to deduce, from the  formula presented here, the analogous one for   higher-weight (Hilbert-) modular motives, as well as a version in the universal ordinary family with some new applications. Nevertheless, the new idea  we introduce  is essentially orthogonal to previous innovations, including those of \cite{1} (and in fact it can be applied, at least in principle, to the non-ordinary case as well). For this reason 
we start in \S~\ref{11} by informally discussing it in the simplest classical case of elliptic curves over $\Q$.
The general form of our results is presented in \S~\ref{statement}.

\subsection{The main ideas in a classical context} \lb{11}
Classically, Heegner points on the elliptic curve $A_{/\Q}$  are images of CM points (or divisors) on a modular curve $X$, under a parametrisation $f\colon X\to A$. More precisely, choosing an imaginary quadratic field $E$, for each ring class character $\chi\colon \Gal(\baar{E}/E)\to \baar{\Q}^{\ts}$, one can construct  a point $P(f, \chi)\in A_{E}(\chi)$, the $\chi$-isotypic part of $A(\baar{E})_{\baar \Q}$.
The landmark formula of Gross--Zagier \cite{gz} relates the height of  $P(f,\chi)$ to the derivative $L'(A_{E}\ot \chi, 1)$ of the $L$-function of a twisted base-change of $A$.  The analogous formula in $p$-adic coefficients\footnote{We denote by `$\doteq$' equality up to a less important nonzero factor.}
\beq
\lb{pr f}\la P(f,\chi), P(f,{\chi}^{-1})\ra\doteq {d\over ds}_{|s=0} L_{p}(A_{E}, \chi\cd\chi_{\rm cyc}^{s})
\eeq
relates cyclotomic derivatives of  $p$-adic $L$-functions to $p$-adic height pairings $\la \ , \ \ra$.   We outline its proof for $p$-ordinary elliptic curves.

\paragraph{Review of Perrin-Riou's proof} Our basic strategy is still Perrin-Riou's variant of  the one of Gross--Zagier; we briefly and informally review it, ignoring, for simplicity of exposition, the role of the character~$\chi$.
Throughout the following discussion, we include pointers to  corresponding statements in the main body of the paper, as guideposts meant to assist the reader's navigation through the more technical framework used there.

Denoting by $\vphi$  the ordinary eigenform attached to $A$, each side of \eqref{pr f} is expressed as the image under a functional  ``$p$-adic Petersson product with $\vphi$'', denoted by $\ell_{\vphi} =\eqref{lfeq}$, of a certain kernel function (a  $p$-adic modular form).  

For the left-hand side of \eqref{pr f}, the form in question is the generating series (cf. \eqref{the series})
\beq \lb {i d z} Z = \sum_{m\geq 1} \la P^{0}, T_{m}P^{0}\ra_{X} {\bf q}^{m} =\sum_{v} Z_{v},\eeq
where $P^{0}\in \Div^{0}(X)$ is a degree-zero modification  of the CM point $P\in X$,   $\la \ , \ \ra_{X}$ is a $p$-adic height pairing on $X$ compatible with the one on $A$, and the decomposition \eqref{i d z} into a sum running over all the finite places of $\Q$ (cf. \eqref{eqdec}) follows from a general   decomposition of the global height pairing into a sum of local ones. More precisely, global height pairings  are valued in the completed tensor product $H^{\ts}\bks{H}_{\A^{\infty}}^{\ts}\hat{\ot} L$ of   the finite id\`eles of  the Hilbert class field $H$ of $E$, and of a suitable finite extension $L$ of $\Q_{p}$. The series  $Z_{v}$ collects the local pairings at $w\vert v$, each valued in $H_{w}^{\ts}\hat{\ot}L$.

The analytic kernel $\cI'$ giving the right-hand side of \eqref{pr f} is the derivative of a   $p$-adic family of  mixed theta-Eisenstein series (cf. \eqref{eqderan}). It also enjoys a  decomposition $$\cI'= \sum_{v\neq p} \cI'_{v}$$ where, unlike \eqref{i d z}, the sum  runs over the finite places of $\Q$ \emph{different from $p$} (cf. \eqref{dec an}). Once established that $Z_{v}\doteq \cI_{v}'$ for $v\neq p$ by computations similar to those of Gross--Zagier (cf. Theorem \ref{theo local comp}), it remains to  show that the $p$-adic modular form $Z_{p}$ is annihilated by $\ell_{\vphi}$ (cf. Proposition \ref{kill lp}).

In order to achieve this, one aims at showing that, after acting on $Z_{p}$ by a Hecke operator to replace $P^{0}$ by $P^{[\vphi]}$  (a lift of the component of its image in the $\vphi$-part of ${\rm Jac}(X)$),  the resulting form $Z_{p}^{[\vphi]}$ is \emph{$p$-critical} (cf. Proposition \ref{is crit}). That is, its coefficients 
$$a_{mp^{s}}:=\la P^{[\vphi]}, T_{mp^{s}}P^{0} \ra_{X,p}$$
 decay $p$-adically no slower than   a constant multiple of $p^{s}$.  The $p$-shift of Fourier coefficients  extends the action on modular forms of the operator $\Up_{p}$  -- which in contrast  acts by a $p$-adic unit on the ordinary form $\vphi$: this implies that $p$-critical forms are annihilated by $\ell_{\vphi}$.

To study the terms $a_{mp^{s}}$, one  constructs a sequence of points  $P_{s}\in X_{H}$ whose fields of definitions  are  the layers $H_{s}$ of the anticyclotomic $p^{\infty}$-extension of $E$. The relations they satisfy allow to express 
 $$a_{mp^{s}} = \la P^{[\vphi]}, D_{m,s}\ra_{X,p},$$
 where $D_{m,s}$ is a degree-zero divisor supported at Hecke-translates of $P_{s}$, which are all essentially   CM points of conductor $p^{s}$ defined over $H_{s}$ (cf. Proposition \ref{pseudo}). By a projection formula for $X_{H_{s}}\to X_{H}$, the height  $a_{mp^{s}}$ is then a sum, over primes $w$ of $H$ above $p$, of the images $$N_{s,w}(h_{m,s,w})$$
  of  heights $h_{m,s,w}$ computed  on $X_{H_{s,w}}$, under the norm map $N_{s,w}\colon H_{s, w}^{\ts}\hat{\ot}L \to H_{w}^{\ts} \hat{\ot} L$.  Moreover it can be  shown that the $L$-denominators  of  $h_{m,s,w}\in H_{s, w}^{\ts}\hat{\ot}L$  are uniformly  bounded (cf. Proposition \ref{di bounded}), so that here we may ignore them and think of $h_{m,s,w}\in H_{s, w}^{\ts}\hat{\ot}\OO_{L}$.
 
A simple observation from  \cite{1} is  that the valuation $w(h_{m,s,w})$ equals 
\beq\lb{int int}
m_{X_{H_{w}}}( P^{[\vphi]} ,D_{m,s}),
\eeq 
the intersection multiplicity of the \emph{flat extensions} (\S~\ref{flex})  of those divisors to some regular integral model  $\X$ of $X_{H_{w}}$. In the split case, it is almost immediate to see that this intersection multiplicity vanishes. This implies that 
\beq\lb{nos}
N_{s,w}(h_{m,s} )\in N_{s,w}(\OO_{H_{s,w}}^{\ts})\hat{\ot}\OO_{L}\subset H_{w}^{\ts}\hat{\ot}\OO_{L}.\eeq 
Since the extension $H_{s,w}/H_{w}$ is totally ramified of degree $p^{s}$, the subset in \eqref{nos}  is  $  p^{s} (\OO_{H_{w}}^{\ts}\hat{\ot}\OO_{L})\subset p^{s} (H_{w}^{\ts}\hat{\ot}\OO_{L}) $, as desired.

\paragraph{The nonsplit case} In the nonsplit case, the $p$-adic intersection multiplicity  has no reason to vanish. However, the  above argument  will still go through if we more modestly show that  \eqref{int int} itself decays at least  like a multiple of $p^{s}$ (cf. Lemma \ref{val cool}). The idea to prove  this  is very simple: we show that if $s$ is large then, for the purposes of computing  intersection multiplicities with other divisors $\cD$ on $\X$, the Zariski closure  of a CM point of conductor at least $p^{s}$  can \emph{almost} be approximated by some  irreducible component  $V$ of the special fibre of $\X$; hence the multiplicity will be zero if $\cD$ arises as a flat extension of its generic fibre.  The  qualifier `almost' means  that the above holds  \emph{except} if $|\cD|$ contains  $V$ itself, which will  be responsible for a multiplicity error term  equal to a constant multiple of $ p^{s}$.

 The approximation result, Proposition \ref{approx}, is precisely   formulated in   an (ultra)metric space of irreducible divisors on the local ring of a regular arithmetic surface, which we introduce following a recent work of  Garc\'ia Barroso, Gonz\'alez P\'erez and Popescu-Pampu \cite{ultrametric}.  The proof of the result is also rather simple (albeit not effective), relying on Gross's theory of quasicanonical liftings \cite{gross}.  The problem of effectively identifying the  approximating divisor $V$ is treated in \cite{equi}.

\paragraph{Subtleties} The above description ignores several difficulties of a relatively more technical nature, most of which we deal with by the representation-theoretic approach of \cite{1} (in turn adapted from Yuan--Zhang--Zhang \cite{yzz}). Namely, we allow for arbitrary modular parametrisations $f$, resulting into an extra parameter $\phi$ in the kernels $Z$ and $\cI'$. By representation-theoretic results, one is free to some extent to \emph{choose} the  parameter $\phi$ to work with  without losing generality. A fine choice (or rather a pair of choices) for its $p$-adic component is dictated by the goal of interpolation, while imposing suitable conditions on its other components allows to circumvent many obstacles in the proof.

\subsection{Statement}\lb{statement}  We now describe our result in the general context  in which we prove it -- which is the same as that of \cite{1} (and \cite{yzz}), to which we refer for a less terse  discussion of the background. (At some points, we find some   slightly different  formulations or normalisations from  those of   \cite{1} to be more natural: see \S~\ref{sec:rev} for the equivalence.)

\paragraph{Abelian varieties parametrised by Shimura curves}
Let $F$ be a totally real field and let $A_{/F}$ be a simple abelian variety of $\GL_{2}$-type. Assume that $L(A,s)$ is modular (this is known in many case if $A$ is an elliptic curve).
Let $\B$ be a quaternion algebra over the ad\`eles $\A=\A_{F}$ of $F$, whose ramification set $\Sg_{\B}$ has odd cardinality and contains all the infinite places. To $\B$ is attached a tower of Shimura curves $(X_{U/F})_{U\subset \B^{\infty\ts}}$, with respective  Albanese varieties $J_{U}$. It carries a canonical system of divisor classes $\xi_{U}\in {\rm Cl}(X_{U})_{\Q}$ of degree $1$, providing a system $\iota_{\xi}$ of maps $\iota_{\xi, U}\in \Hom_{F}(X_{U}, J_{U})_{\Q}$ defined by $P\mapsto  P - \deg(P) \xi_{U}$.

 The space 
$$\pi=\pi_{A, \B}=\varinjlim_{U} \Hom^{0}(J_{U}, A)$$
is either zero or a smooth irreducible representation of $\B^{\ts}$ (trivial at the infinite places), with coefficients in the number field $M:=\End^{0}(A)$. We assume we are in the case $\pi=\pi_{A , \B}\neq 0$, which under the modularity assumption and the condition \eqref{root n} below can be arranged by suitably choosing $\B$. Then  for all places $v\nmid \infty$,  $L_{v}(A, s)=L_{v}(s-1/2 , \pi)$ in $M\ot \bC$.
We denote by $\omega\colon F^{\ts}\bks \A^{\ts} \to M^{\ts}$ the central character of $\pi$.
We have a canonical isomorphism $\pi_{A^{\vee}, \B}\cong \pi_{A,\B}^{\vee}$, see \cite[\S 1.2.2]{yzz}, and we denote by $( \ , \ )_{\pi}\colon \pi_{A, \B}\ot \pi_{A^{\vee}, \B}\to M $ the duality pairing.   

\paragraph{Heegner points}  Let $E/F$ be a CM quadratic extension with associated quadratic character $\eta$, and assume that $E$ admits  an $\A$-embedding $E_{\A^{}}\into \B^{}$, which we fix. Then $E^{\ts }$ acts on the right on $X=\varprojlim_{U}X_{U}$. The fixed-points subscheme $X^{E^{\ts}}\subset X$ is $F$-isomorphic to  $\Spec E^{\rm ab}$, and we fix a point $P\in X^{E^{\ts}}(E^{\rm ab})$. 
Let $$\chi\colon E^{\ts}\bks E_{\A^{\infty}}^{\ts} \cong \Gal(E^{\rm ab}/E) \to L(\chi)^{\ts}$$ be a character valued in a field extension of $L(\chi)\supset M$, satisfying 
$$\omega\cd\chi_{|\A^{\infty\ts}}=\one, $$
and let $$A_{E}(\chi):=(A(E^{\rm ab})\otimes_{M}L(\chi)_{\chi})^{\Gal(E^{\rm ab}/E)},$$ where $L(\chi)_{\chi}$ is an $L(\chi)$-line with Galois action by $\chi$.

Then we have a \emph{Heegner point functional} 
\beq 
f\quad\mapsto\quad  P(f, \chi):=\dashint_{\Gal(E^{\rm ab}/E)}f(\iota_{\xi}(P)^{\tau})\otimes\chi(\tau)\, d\tau\ \   \in A_{E}(\chi)\eeq
(integration for the Haar measure of volume $1$)  in the space of invariant linear functionals 
$$\H(\pi_{A, \B},\chi)\ot_{L(\chi)}A_{E}(\chi) ,\qquad \H(\pi,\chi):= \Hom_{E_{\A^{}}^{\ts}}(\pi\ot\chi, L(\chi))$$
where $E_{\A^{}}^{\ts}$ acts diagonally. There is a product decomposition $\H(\pi, \chi)=\bigotimes_{v}\H(\pi_{v}, \chi_{v})$, where similarly 
$\H(\pi_{v}, \chi_{v}):=\Hom_{E_{v}^{\ts}}(\pi_{v}\ot\chi_{v}, L(\chi))$.
\paragraph{A local unit of measure for invariant functionals} By foundational local results of Waldspurger, Tunnell, and Saito,   the dimension of $\H(\pi,\chi)$ (for any representation $\pi $ of $\B^{\ts}$) is either  $0$ or $1$.  If $A$ is modular and  the global root number
\beq\lb{root n}
\eps(A_{E}\ot\chi)=-1\eeq
then the set of local root numbers determines  a unique quaternion algebra $\B$ over $\A$, satisfying the conditions required above and containing $E_{\A}$, such that $\pi_{A, \B}\neq 0$ and   $\dim_{L(\chi)} \H(\pi_{A, \B},\chi) =1$.\footnote{If $\eps(A_{E}\ot\chi)=+1$ there is no such quaternion algebra and all Heegner points automatically vanish.} 
 We place ourselves in this case; then  there is a canonical factorisable generator 
$$Q_{(, ), dt} =\prod_{v}Q_{(, )_{v}, dt_{v}} \in \H(\pi,\chi) \ot \H(\pi^{\vee},\chi^{-1})$$
depending on the choice of a pairing $(,)=\prod_{v}(, )_{v}\colon \pi\ot \pi^{\vee}\to L(\chi)$ and a measure $dt=\prod_{v} dt_{v}$ on 
$E_{\A^{}}^{\ts}/\A^{\ts}$. It is defined locally as follows. Let us use symbols $V_{(A, \chi)}$ and $V_{(A, \chi), v}$, which we informally think of as denoting (up to abelian factors) the `virtual motive over $F$ with coefficients in $L(\chi)$'
$$\text{``$V_{(A, \chi)}={\rm Res}_{E/F}(h_{1}(A_{E})\ot\chi)\ominus \ad(h_{1}(A)(1))$''}$$ 
 and  its local components (the associated local Galois representation or, if $v$ is archimedean,  Hodge structure). Then we  let, for each place $v$ of $F$ and any auxiliary $\iota\colon L(\chi)\into \bC$,\footnote{Explicitly, if $v$ is archimedean  we have $\cL(V_{(A, \chi), v}, 0)=2$ and $Q_{(,)_{v}, dt_{v}}(f_{1,v}, f_{2,v})= 2^{-1}\vol(\bC^{\ts}/\R^{\ts}, dt_{v}) (f_{1,v}, f_{2,v})$.}
\begin{align}
\lb{calLv}
\cL(\iota V_{(A, \chi), v},s)&:= {\zeta_{F,v}(2) L(1/2+s,\iota \pi_{E,v}\ot \iota\chi_{v}) \over {L(1, \eta_{v})L(1,\iota \pi_{v}, \ad)}}
\cdot  \begin{cases} 1 \quad &\text{if $v$ is finite}\\ \pi^{-1}  &\text{if
 $v\vert \infty$}\end{cases}
\quad \in \iota L(\chi),\\
\lb{Qv1}
 Q_{(, )_{v}, dt_{v}}(f_{1, v}, f_{2,v}, \chi_{v})&:=
\iota^{-1}\cL(\iota V_{(A, \chi),v},0)^{-1}\
 \int_{E_{v}^{\ts}/F_{v}^{\ts}}\chi(t_{v}) (\pi_{v}(t_{v})f_{1,v}, f_{2, v})_{v}\, dt_{v} .
\end{align}

We make the situation more canonical by choosing $dt=\prod_{v} dt_{v} $ to satisfy 
$$\vol(E^{\ts}\bks E_{\A}^{\ts}/\A^{\ts}, dt)=1$$
and by defining, for any $f_{3}\in \pi$, $f_{4}\in \pi^{\vee}$ such that $(f_{3}, f_{4})\neq 0$,
\beq\label{Qv intro} 
Q\left( {  f_{1} \ot  f_{2}\over f_{3}\ot f_{4}};\chi\right) := {Q_{( , ), dt}(f_{1}, f_{2},\chi)\over ( f_{3}, f_{4})}.
\eeq

\paragraph{$p$-adic heights}  Let us fix a prime $\frakp$ of $M$ and denote by $p$ the underlying rational  prime. Suppose from now on that for each $v\vert p$,  
$A_{F_{v}}$ has  $\frakp$-ordinary (potentially good or semistable) reduction. That is, that for a sufficiently large finite extension $L\supset M_{\frakp}$, the 
 rational $\frakp$-Tate module $W_{v}:=V_{p}A\ot_{M} L$
 is a   reducible   $2$-dimensional representation of $\Gal(\baar{F}_{v}/F_{v})$:
\beq\lb{splitting}
0 \to W_{v}^{+} \to W_{v}\to W_{v}^{-} \to 0.\eeq
Fix  such a coefficient field $L$, 
 and for  each $v\vert p$ let
  $\alpha_{v}\colon F_{v}^{\ts}\cong \Gal(F_{v}^{\rm ab}/F_{v})\to L^{\ts}$ be the character giving the action on the  twist $W_{v}^{+}(-1) $. The field $L(\chi)$ considered above will from now on be assumed to be an extension of $L$.
Under those conditions there is a canonical $p$-adic \emph{height pairing} 
$$\la \ , \ \ra\colon A_{E}(\chi) \ot A_{E}^{\vee}(\chi^{-1}) \to \Gamma_{F}\hat{\ot}L(\chi), $$
where $\Gamma_{F}:= \A^{\ts}/\baar{F^{\ts}\widehat{\OO}^{p , \ts}_{F}}$  (the bar denotes Zariski closure). It is normalised `over $F$' as in \cite[\S4.1]{1}. 

For $f_{1}, f_{3}\in \pi$, $f_{2}, f_{4}\in \pi^{\vee}$, and $P^{\vee} \colon \pi^{\vee}\ot \chi\to A_{E}^{\vee}(\chi^{-1})$ the  Heegner point functional of  the dual, our result will measure the ratio $\la P(f_{1}, \chi), P^{\vee}(f_{2}, \chi^{-1})\ra/( f_{3}, f_{4})_{\pi} $,  against the value at the $f_{i}$ of the `unit' $Q$.  The size will be given by the derivative of the  $p$-adic $L$-function that we now define.

\paragraph{The $p$-adic $L$-function} 
We continue to assume that $A$ is $\frakp$-ordinary, and review the definition of the $p$-adic $L$-function from \cite[Theorem A]{1} (in an equivalent form). We start by defining the space on which it lives. Write $\Gamma_{F}=\varprojlim_{n}\Gamma_{F,n}$ as the limit of an inverse system of finite groups, and let 
\beq \lb{YF}
\Y_{F}^{\rm l.c.} :=\bigcup_{n}\Spec L[\Gamma_{F,n}] \quad \subset \quad 
\Y_{F} := \Spec \OO_{L}\llb \Gamma_{F}\rrb\ot_{\OO_{L}}L.\eeq
Then $\Y_{F}$ is a space of continuous characters on $\Gamma_{F}$, and the $0$-dimensional ind-scheme $\Y_{F}^{\rm l.c.}$ is its subspace of locally constant (finite-order) characters.

For a character   $\chi' \colon  \Gal(E^{\rm ab}/E)\to L'^{\ts}$ together with an embedding $\iota \colon L'\to  \bC$,
we shall interpolate the ratio of  complete $L$-functions
$$\cL( \iota V_{(A, \chi')}, s):= \prod_{v} \cL( \iota V_{(A, \chi'),v}, s),\qquad  \cL(\iota V_{(A, \chi'),v}, s)=\eqref{calLv}, \qquad \Re(s)\gg0$$
where the product runs over all places of $F$.

We now define the $p$-interpolation factors for the $p$-adic $L$-function. First, recall that the (inverse) Deligne--Langlands gamma factor of a  Weil--Deligne representation $W'$ of $\Gal(\baar{F}_{v}/F_{v})$ over a $p$-adic field $L'$, with respect to a nontrivial character $\psi_{v}\colon F_{v}\to\bC^{\ts}$ and  an embedding $\iota\colon L'\into \bC$,  is defined as\footnote{The terms $L$ and $\eps$ are normalised as in \cite{tate-nt}.}
$$\gamma(W',\psi_{v})^{-1}:= {L(W')\over \eps(W', \psi_{v}) L(W^{*}(1))}. $$
 Let $\psi=\prod_{v}\psi_{v}\colon F\bks \A \to \bC^{\ts}$ be the standard additive character such that $\psi_{\infty}(\cd) = e^{2\pi i \Tr_{F_{\infty}/\R}(\cd)}$; let $\psi_{E}=\prod_{w}\psi_{E, w}=\psi\circ \Tr_{\A_{E}/\A}$. For  a place $v\vert p $ of $F$, let $d_{v}$ be a generator of the different ideal of $F_{v}$. For a character $\chi'\colon \Gal(\baar{E}/E)\to \bC^{\ts}$, we  define
\beq \lb{eVv}
e_{v}(V_{(A, \chi')})= |d_{v}|^{-1/2}
 {\prod_{w\vert v}\gamma(\iota{\rm WD}(W^{+}_{v|\calG_{E, w}} \ot \chi'_{w}), \psi_{E,w})^{-1}
 \over \gamma(\iota{\rm WD}(\ad(W_{v})(1))^{++}, \psi_{v})^{-1}}
 \cd \cL( V_{(A, \chi'),v})^{-1},\eeq
 where  where $ \ad(W_{v})(1)^{++}:= \Hom(W^{-}_{v}, W^{+}_{ v})(1)= \omega_{v}^{-1}\al_{v}^{2}|\ |_{v}^{2}$,
 and $\iota {\rm WD}$ is the functor from potentially semistable Galois representations to complex Weil--Deligne representations of \cite{fontaine}.

\begin{theoA}\lb{A}
There is a function 
$$\cL_{p}(V_{(A, \chi)})\in \OO(\Y_{F})$$
characterised by the following property.  For  each complex geometric point $s=\chi_{F}\in \Y_{F}^{\rm l.c.}(\bC) $, with underlying embedding $\iota\colon L(\chi_{F})\into \bC$, 
$$\cL_{p}(V_{(A, \chi)},s)=  \iota {e}_{p}({ V}_{(A, \chi')}) \cd \cL(\iota V_{(A, \chi')}, 0), \qquad\chi':= \chi\cd \chi_{F|\Gal(\baar{E}/E)},$$
where $  {e}_{p}(V_{(A, \chi')}):= \prod_{v\vert p} e_{v}(V_{(A, \chi')})$. 
\end{theoA}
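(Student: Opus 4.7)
The statement repackages the $p$-adic $L$-function of \cite[Theorem A]{1} as a single section of the sheaf $\OO(\Y_{F}\ts\Psi_{p}, \eta_{p}\chi_{F, {\rm univ}, p}^{2})$, i.e.\ as a function on $\Y_{F}\ts\Psi_{p}$ transforming in a prescribed way under the $F_{p}^{\ts}$-action on $\Psi_{p}$. My plan is threefold: (i)~inherit the existence and $p$-adic analyticity from \cite{1}; (ii)~verify the $F_{p}^{\ts}$-equivariance claimed; and (iii)~match the interpolation formula at every classical point of $\Y_{F}^{\rm l.c.}\ts \Psi_{p}(\C)$. Density of classical points in $\Y_{F}$ then characterises $\cL_{p}(V_{(A, \chi)})$ uniquely.

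\textbf{Existence and equivariance.} A choice of additive character $\psi_{p}=\prod_{v\vert p}\psi_{v}$ of $F_{p}$ trivialises the $F_{p}^{\ts}$-torsor $\Psi_{p}$ and identifies $\OO(\Y_{F}\ts\Psi_{p},\eta_{p}\chi_{F,{\rm univ},p}^{2})$ with $\OO(\Y_{F})$; \cite[Theorem A]{1} then supplies the desired element of the latter. For the equivariance, it suffices to show that at each classical point $\chi_{F}$, with $\chi':=\chi\cd\chi_{F|\Gal(\baar{E}/E)}$,
\[
e_{p}(V_{(A,\chi')}, a.\psi_{p}) = \eta_{p}(a)\,\chi_{F,p}^{2}(a)\cd e_{p}(V_{(A,\chi')},\psi_{p}) \qquad (a\in F_{p}^{\ts}).
\]
This follows from the standard $\psi$-dependence of Deligne--Langlands $\eps$-factors applied to the virtual local Galois representation $V_{(A,\chi'),v}^{+}$: a direct computation of determinant and virtual dimension, using the definition of $\ad(W_{v})(1)^{++}$ and the relation $\chi'_{|F_{p}^{\ts}}=\chi_{F,p}\cd\omega_{p}^{-1}$ forced by the condition $\omega\cd\chi_{|\A^{\infty\ts}}=\one$, yields precisely the factor $\eta_{v}\chi_{F,v}^{2}$ at each $v\vert p$ with no $|a|_{v}$ contribution.

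\textbf{Interpolation.} At a classical point, the factors at $v\nmid p\infty$ and the archimedean factors in \cite[Thm.~A]{1} agree by inspection with the product $\prod_{v\nmid p}\cL(\iota V_{(A, \chi'),v},0)$. It remains to identify, at each $v\vert p$,
\[
\cL(\iota V_{(A, \chi'),v},0)\cd \gamma(\iota V_{(A,\chi'),v}^{+},\psi_{v})^{-1}
\]
with the modified local Euler factor appearing in the interpolation formula of \cite{1}. I would unfold $\gamma^{-1}$ as a ratio of $L$- and $\eps$-factors of Weil--Deligne representations, apply it to the two pieces of $V_{(A,\chi'),v}^{+}= W_{v}^{+}\ot ({\rm Ind}_{E}^{F}\chi')_{v} \ominus \ad(W_{v})(1)^{++}$, and use the explicit formulas for $L$- and $\eps$-factors of ordinary local Galois representations to reduce the match to a place-by-place identity involving the ordinary eigenvalue $\al_{v}$, the conductor of $\chi'_{v}$, and a local Gauss sum.

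\textbf{Main obstacle.} The delicate case is ramified $\chi'_{v}$ at $v\vert p$, which occurs densely in $\Y_{F}$. Here both formulations carry conductor-dependent Gauss-sum factors, and matching them requires careful tracking of the additive characters $\psi_{v}$ and of the duality $W_{v}^{+}\leftrightarrow W_{v}^{-}$ implicit in the adjoint correction $\ad(W_v)(1)^{++}=\Hom(W_v^{-},W_v^{+})(1)$; subtle sign and normalisation issues (consistent with the conventions of \cite{tate-nt} used for $L$ and $\eps$) will need to be tracked throughout. Once the equality is established on a Zariski-dense set of classical points, rigidity of $\OO(\Y_{F})$ together with continuity of the gamma-factor formula in $\psi_{p}$ forces it on all of $\Y_{F}\ts\Psi_{p}$, completing the identification.
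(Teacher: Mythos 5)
Your structural decomposition (inherit existence from \cite[Theorem A]{1}, verify $F_{p}^{\ts}$-equivariance, match the interpolation) is essentially the same outline as the paper's proof: indeed the paper defines $\cL_{p}(V_{(A,\chi)})$ as $j_{\chi}^{*}L_{p,\alpha}(\sg_{E})$ multiplied by a correction factor and a constant, exactly as you propose. Your equivariance argument via the $\psi$-dependence of $\eps$-factors and the determinant of $V^{+}_{(A,\chi'),v}$ is also the paper's argument (Lemma \ref{gdet} together with the determinant computation in the proof of Proposition \ref{propcratio}).

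Where you diverge is in step (iii). You propose to directly ``unfold $\gamma^{-1}$ as a ratio of $L$- and $\eps$-factors'' and match it against the modified Euler factor of \cite{1} by explicit formulas, flagging the ramified case at $v\vert p$ as the main obstacle because of Gauss-sum bookkeeping. The paper's Proposition \ref{propcratio} avoids this entirely by a change of viewpoint: rather than proving $e_{p} = (\text{const})\cd\prod_{v\vert p}Z_{v}^{\circ}$ by computing each side, one defines the ratio $C(\chi_{p}',\psi_{p}) = e_{p}/\prod Z_{v}^{\circ}$ and proves directly that it is \emph{independent of $\chi'_{p}$}. The engine is twofold. First, Lemma \ref{int gamma} (the Tate $\GL_{1}$ local functional equation, read backwards) converts the integral representation of $Z_{v}^{\circ}$ from \cite[Lemma A.1.1]{1} into an inverse gamma factor, so that $C$ becomes a ratio of gamma factors $C_{v}'(W_{v}^{+},\psi_{v})/\gamma(\ad(W_{v})(1)^{++},\psi_{v})$. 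Second, Lemma \ref{www}, which is a purely structural statement (via Deligne--Henniart inductivity in degree zero, following \cite{tate-nt}), shows that the inductivity-defect constant $C_{v}'$ depends only on $\psi_{v}$ and $\dim W$, not on the representation itself, and in particular not on $\chi'_{v}$. The ``careful tracking of Gauss sums'' you anticipate simply does not occur: it is replaced by the observation that gamma-factor inductivity defects are universal. Your plan would likely succeed with effort, but you should be aware that the paper's argument is cleaner precisely because it proves only constancy of the ratio, not an identity between two explicitly computed quantities.

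One further small correction: your first sentence suggests that choosing a single $\psi_{p}$ trivialises $\Psi_{p}$ and hence lets you identify the twisted sheaf with $\OO(\Y_{F})$, deferring equivariance to a later check. The paper's route is slightly different and avoids this: $L_{p,\alpha}(\sg_{E})$ from \cite[Theorem A]{1} (as corrected in Appendix \ref{app err}) already lives on $\Y'\ts\Psi_{p}^{\circ}$ with a prescribed equivariance, and $C\in\OO(\Psi_{p},\omega_{p}\alpha_{p}^{-2}|\cd|_{p}^{-2})$ has a complementary one; their product is then a section of the correct twisted sheaf on $\Y_{F}\ts\Psi_{p}^{\circ}$, and the extension from $\Psi_{p}^{\circ}$ to $\Psi_{p}$ is forced by the equivariance itself. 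This is how the paper handles the passage from a fixed-level scheme of characters to the full ind-scheme $\Psi_{p}$, a point your outline does not address.
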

The factor $ {e}_{p}(V_{(A, \chi')})$ coincides with the one predicted by Coates and Perrin-Riou (see \cite{coates}) for $V_{(A, \chi')}$ (their conjecture motivates the denominator terms in \eqref{eVv}, which are constants), up  to  the removal of a trivial zero from their interpolation factor for $\ad(W_{v})(1)$.

 \paragraph{The $p$-adic Gross--Zagier formula}
We are almost ready to state our main result. 
Denote by $0\in \Y_{F}$ the point corresponding to $\chi_{F}=\one$, and 
let $$\cL_{p}'(V_{(A, \chi)}, 0):= {\rm d}\, \cL_{p}(V_{(A, \chi)}, 0)\in T_{0}\Y_{F}\cong \Gamma_{F}\hat{\ot} L(\chi).$$

  We say that $\chi_{p}$ is \emph{sufficiently ramified} if it is nontrivial  on a certain open subgroup of $\OO_{E,p}^{\ts}$ depending only on $\omega_{p}$  (see Assumption \ref{assp1} below for the precise definition and a comment).

\begin{theoA} \lb{MT}
Suppose that  the abelian variety $A_{/F}$ is modular and that for all $v\vert p$,  the $\Gal(\baar{F}_{v}/F_{v})$-representation $V_{\frakp}A$ is ordinary and potentially crystalline. Let $\chi\colon \Gal(E^{\rm ab}/E)\to L(\chi)^{\ts}$ be a finite-order character satisfying $$\eps(A_{E}\ot\chi)=-1,$$  and suppose that $\chi_{p}$ 
 sufficiently ramified.

Then for any $f_{1},f_{3}\in \pi$, $f_{2},f_{4}\in \pi^{\vee}$ such that $(f_{3}, f_{4})_{\pi}\neq 0$, we have
$$
 {\la P(f_{1}, \chi), P^{\vee}(f_{2}, \chi^{-1})\ra
\over 
( f_{3}, f_{4})_{\pi} }
=
{e}_{p}(V_{(A, \chi)})^{-1}\cd 
 \cL_{p}'(V_{(\A , \chi)}, 0) \cd 
Q\left( {  f_{1} \ot  f_{2}\over f_{3}\ot f_{4}} ; \chi\right) 
$$
in $ \Gamma_{F}\hat{\ot} L(\chi)$.
\end{theoA}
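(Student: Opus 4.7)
My approach would follow Perrin-Riou's kernel-function strategy in the representation-theoretic incarnation of \cite{1} (adapted from \cite{yzz}), concentrating the new work on the local term at $p$ where the non-split hypothesis enters. Write both sides of the desired identity as images under a $p$-adic Petersson-type functional $\ell_{\vphi}$ -- associated with the $\frakp$-ordinary form $\vphi$ attached to $A$ -- of two kernel $p$-adic modular forms: an arithmetic generating series
\[
Z(\phi)\ =\ \sum_{m}\la P^{0},\, T_{m}P^{0}\ra_{X,\phi}\, \qqq^{m}\ =\ \sum_{v} Z_{v}(\phi),
\]
built from a degree-zero modification of the Heegner point and broken up along the global-to-local decomposition of the $p$-adic height, and the derivative $\cI'(\phi) = \sum_{v\nmid \infty p}\cI'_{v}(\phi)$ of a $p$-adic family of mixed theta--Eisenstein series that interpolates the $L$-values of Theorem \ref{A}. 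I would exploit the freedom in the Schwartz-type parameter $\phi$, constrained only up to its $\B^{\ts}$-orbit, both to impose support conditions at auxiliary finite places that simplify the matching and to fix the good vectors at $p$ dictated by interpolation. The theorem is then reduced to the two assertions
\[
Z_{v}(\phi)\ \doteq\ \cI'_{v}(\phi)\quad(v\neq p),\qquad \ell_{\vphi}(Z_{p}(\phi))\ =\ 0.
\]

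The matchings at $v\neq p$ would be handled by the local computations of \cite{1}, which extend those of \cite{gz,yzz} to the ordinary $p$-adic setting and are essentially insensitive to the behaviour of $p$ in $E$. The crux is thus the $p$-local vanishing. Acting by a Hecke operator replaces $P^{0}$ in $Z_{p}$ by a lift $P^{[\vphi]}$ of its projection to the $\vphi$-part of the Jacobian; calling the result $Z_{p}^{[\vphi]}$, it suffices to show that $Z_{p}^{[\vphi]}$ is $p$-critical, i.e.\ that
\[
a_{mp^{s}}\ :=\ \la P^{[\vphi]},\, T_{mp^{s}}P^{0}\ra_{X,p}
\]
is $O(p^{s})$ in $L$, such forms being annihilated by $\ell_{\vphi}$ (which inverts $\Up_{p}$ on $\vphi$). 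Using a norm-compatible tower of CM points $P_{s}\in X(H_{s})$ along the anticyclotomic $p^{\infty}$-tower together with a projection formula for $X_{H_{s}}\to X_{H}$, one may rewrite
\[
a_{mp^{s}}\ =\ \sum_{w\mid p}\, N_{s,w}(h_{m,s,w}),
\]
where $h_{m,s,w}$ is a local height on $X_{H_{s,w}}$ valued in $H_{s,w}^{\ts}\hat{\ot}L$ with uniformly bounded $L$-denominator, and $N_{s,w}$ is the norm to $H_{w}^{\ts}\hat{\ot}L$.

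The main obstacle -- and the single place where the non-split assumption genuinely bites -- is to bound the valuations $w(h_{m,s,w})$ well enough that, combined with the fact that $H_{s,w}/H_{w}$ is totally ramified of degree $p^{s}$ (up to bounded factors), the conclusion $a_{mp^{s}}=O(p^{s})$ follows. By the key observation of \cite{1}, $w(h_{m,s,w})$ equals the intersection multiplicity $m_{\X_{w}}(P^{[\vphi],\flat},\,D_{m,s}^{\flat})$ of flat extensions on a regular integral model $\X_{w}$ of $X_{H_{w}}$, where $D_{m,s}$ is supported on CM points of conductor $p^{s}$. In the split case this vanishes by direct inspection once $s$ is large; in the non-split case it generally does not. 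To recover a usable estimate I would work in the ultrametric space of irreducible divisors on the regular local ring of $\X_{w}$ from \cite{ultrametric}, and prove the following approximation: using Gross's theory of quasicanonical liftings \cite{gross} to parametrise CM points of $p$-power conductor on a formal neighbourhood of a supersingular point of $\X_{w}$, the flat extension of a CM point of conductor $p^{s}$ converges, as $s\to\infty$, to a fixed irreducible vertical component $V$ of the special fibre. For purposes of intersecting with any divisor $\cD$ arising as the flat extension of its generic fibre, this substitution is exact (since $V$ is vertical and $\cD$ flat on a regular arithmetic surface), up to an exceptional contribution bounded by a constant multiple of $p^{s}$ when $|\cD|$ happens to meet $V$. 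This produces the required $O(p^{s})$ bound on $w(h_{m,s,w})$, and hence the $p$-criticality of $Z_{p}^{[\vphi]}$.

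What remains is bookkeeping largely inherited from \cite{1}: uniform control of the $L$-denominators of the local heights; transferring the identity from the special test data $(f_{i},\phi)$ at which it is natural to prove it to the general data in the statement via the one-dimensionality of $\H(\pi,\chi)$; matching normalisations so that the factor $e_{p}(V_{(A,\chi)})^{-1}$ of Theorem \ref{A} appears with the correct constant on the right-hand side; and checking that the hypotheses that $\chi_{p}$ be sufficiently ramified and not exceptional enter exactly where needed, namely to preclude trivial zeroes in $e_{p}$ and degenerations in the interpolation along $\Y_{F}$.
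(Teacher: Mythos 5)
Your proposal reproduces the paper's own strategy in all essential respects: both sides are realised as $\ell_{\vphi}$-images of kernel $p$-adic modular forms $Z$ and $\cI'$, decomposed along local height and local intersection; the non-$p$ places are matched by the local computations of \cite{1}/\cite{yzz}; and the new content is isolated to showing that the contribution of $Z_{v}$ for $v\mid p$ non-split is $v$-critical, which you establish (as the paper does) by interpreting $p$-adic local heights as arithmetic intersection multiplicities with flat extensions and proving, via Gross's quasicanonical liftings and the ultrametric on irreducible divisors of \cite{ultrametric}, that the Zariski closure of a CM point of high conductor behaves, for intersection with a fixed divisor, like a multiple of a vertical component of the special fibre. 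One small imprecision worth flagging: the exceptional multiplicity term arises not when $|\cD|$ merely \emph{meets} the approximating vertical component $V$ but when $V$ is an irreducible component of $|\cD|$ itself (equivalently, when $\cD$ has a nonzero $\X_{\kappa}$-part after subtracting off the piece not containing $V$); this is precisely why it is killed by taking $\cD$ a flat extension, whereas mere transversal intersections of $\cD$ with $V$ are absorbed into the main $\rho(V\bullet\cD)$ term.
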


\begin{rema} \lb{rmk remove} The technical assumptions that $\chi_{p}$ is sufficiently ramified and that $V_{\frakp}A$ is potentially crystalline\footnote{An assumption of this sort is equally necessary in the proof of the main theorem of \cite{1}, see Appendix \ref{app err}.}
 are removed by $p$-adic analytic continuation in \cite[Theorem B]{univ}, and replaced by the (necessary) assumption that that $\chi_{p}$ is \emph{not exceptional} for $A$, that is ${e}_{p}(V_{(A, \chi)})\neq 0$ (which in our case is implied by the potential crystallinity).

Note that for the removal of the former assumption, one only needs the anticyclotomic formula analogous to \cite[Theorem C.4]{1}, and not the full generality of  the multivariable formula in \cite[Theorem D]{univ}.
 \end{rema}
\begin{rema} Concrete versions of the formula of Theorem \ref{MT} may be obtained by choosing explicit  parametrisations $f_{i}$ and evaluating the term $Q$. This is a local problem, solved in \cite{CST}. In particular, by starting from Theorem \ref{MT} (as generalised to all characters following Remark \ref{rmk remove}) and applying the same steps as in the proofs of \cite[Theorems 4.3.1, 4.3.3]{exc}, we obtain the simple  $p$-adic Gross--Zagier formula in anticyclotomic families for elliptic curves $A_{/\Q}$ proposed in  \cite[Conjecture 4.3.2]{exc}, and similarly the direct analogue\footnote{Of course, this is a long detour to get there;  readers interested exclusively in the removal of the `$p$ splits' assumption from Perrin-Riou's formula, or from its analogue over totally real fields, may prefer to try and insert the new argument of the present paper into Perrin-Riou's proof,  or respectively into \cite{dd-ant}.}
 of Perrin-Riou's original result in \cite{PR}. 
\end{rema}

 The theorem has familiar applications extending to the nonsplit case those from \cite{1} (when the other ingredients are available); we leave their formulation to the interested reader, and highlight   instead an application specific to this case pointed out in \cite{exc}, as well as a new application to the non-vanishing conjecture for  $p$-adic heights.
 
\paragraph{A new proof  of a result of Greenberg--Stevens}  
As noted in \cite[Remark 5.2.3]{exc}, the anticylcotomic formula indicated in the previous remark, combined with a result of Bertolini--Darmon, gives yet another proof (quite likely the most complicated so far, but amenable to generalisations) of the following famous result of Greenberg and Stevens \cite{GS}. If $A_{/\Q}$ is an elliptic curve of split multiplicative reduction at $p$, with N\'eron period $\Omega_{A}$ and $p$-adic $L$-function $L_{p}(A, -)$ on $\Y_{\Q}$, then $L_{p}(A, \one)=0$ and 
\beq\lb{eqMT}
 L_{p}'(A,\one) =
\lambda_{p}(A)
 \cd {L(A, 1)\over \Omega_{A}},\eeq
where
 $\lambda_{p}(A)$
  is the $\mathcal{L}$-invariant of Mazur--Tate--Teitelbaum \cite{MTT}.  
  
  We recall a sketch of the argument, referring to \cite{exc} for more details. One chooses an imaginary quadratic   field $E$ such that $p$ is inert in $E$ and that the twist $A^{(E)}$ satisfies  $L(A^{(E)}, 1)\neq 0$. By the anticyclotomic $p$-adic Gross--Zagier formula, $L_{p}'(A_{E},\one)$ is the value at $\chi=\one$ of the height  of an anticyclotomic family $\cP$ of Heegner points.  It is shown in \cite[\S5.2]{bdsurvey}  that the value $\cP(\one)$  equals, in an extended Selmer group,  the Tate parameter $q_{A,p}$ of $A_{\Q_{p}}$ multiplied by a square root of  $L(A_{E}, 1)/\Omega_{A_{E}}$. The height of $q_{A,p}$, in the `extended' sense of \cite{MTT, nek-selmer}, essentially equals $\lm_{p}(A)$. This shows  that, after harmlessly  multiplying by  $L(A^{(E)},1 )/\Omega_{A^{(E)}}$,  the two sides of \eqref{eqMT} are equal.

\subsubsection{Exceptional cases and non-vanishing results} Suppose that $A_{/\Q}$ has multiplicative reduction at a prime $p$ inert in $E$, and that $ L(A_{E}, 1)\neq 0$. Then \emph{for all but finitely many  anticyclotomic  characters $\chi$ of $p$-power conductor, a Heegner point in $A_{E}(\chi)$ is nonzero and the $p$-adic height pairing on $A_{E}(\chi)$ is nondegenerate.} This follows from noting, similarly to the above, that in  the $p$-adic Gross--Zagier formula in anticyclotomic families for $A_{E}$, both sides are nonzero since the heights side specialises, at the character $\chi=\one$, to a nonzero multiple of $\lm_{p}(A)$, which is in turn nonzero by \cite{st et}.

A similar argument, applied to the  formula in Hida families of \cite{univ}, will yield the following result: if $A_{/\Q} $ is an elliptic curve with multiplicative reduction and $L(A,1)\neq 0$, then the Selmer group of the selfdual Hida family ${\bf f}$ through $A$ has generic rank one, and both  the height regulator and the  cyclotomic derivative of the $p$-adic $L$-function of ${\bf f}$ do not vanish. The details will appear in \cite{univ}.

\subsection{Organisation of the paper} 
In \S~\ref{sec2}, we restate our theorems in an equivalent form, direct generalisation of the statements from  \cite{1} (up to a correction involving a factor of $2$, discussed in Appendix B). In \S~\ref{sec3} we recall the proof strategy from \cite{1}, with suitable modifications and corrections. The new argument to treat $p$-adic local heights in the nonsplit case  is  developed in \S~\ref{sec4}. 

We conclude with two appendices, one dedicated to  some local results, the other containing a list of errata to \cite{1}.

\section{Comparison with  \cite{1}}
\lb{sec2}
We  compare Theorems \ref{A} and \ref{MT} with the corresponding results from \cite{1}. We continue with the setup and  notation of \S~\ref{statement}.
\subsection{The $p$-adic $L$-function} We deduce our Theorem \ref{A}  from \cite[Theorem A]{1}.

Let $\sg^{\infty}$ be the nearly $\frakp$-ordinary, $M$-rational  (\cite[Definition 1.2.1]{1}) representation of $\GL_{2}(\A)$ attached to $A$ as in \cite{1}. In Theorem A \emph{ibid.} we have constructed a $p$-adic $L$-function 
$$L_{p, \alpha}(\sg_{E}),$$
which is a bounded function on a  rigid space $\Y'^{\rm rig}_{/L}$   (denoted by $\Y'$ in \cite{1}).  In the construction of \emph{loc. cit.} (and in all this paper), we use the same additive  character $\psi_{p}=\prod_{v\vert p}\psi_{v}$ as in Theorem \ref{A};  see the correction in Appendix \ref{app err} for the exact ring of definition of $L_{p, \al}(\sg_{E})$.

The space $\Y'^{\rm rig}=\Y'^{\rm rig}_{\omega, V^{p}}$   parametrises certain continuous $p$-adic  characters of $E^{\ts}\bks E_{\A}^{\ts}$ invariant under an arbitrarily fixed compact open subgroup $V^{p}\subset E_{\A^{p\infty}}^{\ts}$.
The boundedness means precisely that we may (and do) identify  $L_{p, \al}(\sg_{E})$  with a function on a corresponding scheme
\beq\lb{Y'}
\Y'\subset \Spec \OO_{L}\llb E^{\ts}\bks E_{\A^{\infty}}^{\ts}/V^{p} \rrb\ot {L}, 
\eeq
that, when also viewed as a space of characters $\chi'$, is  the subscheme cut out by the closed condition $\omega\cd \chi'_{|\widehat{\OO}_{F}^{p,\ts}}=\one$.
 Similarly to $\Y_{F}$, the scheme $\Y'$ contains  a $0$-dimensional subscheme $\Y'^{\rm l.c.}$ parametrising the locally constant characters in $\Y'$. The function $L_{p, \al}(\sg_{E})$ is characterised by the following property. 
Denote by $D_{K}$ the discriminant of a number field $K$. Then  at all $\chi'\in \Y'$
 with underlying embedding $\iota\colon L\into\bC$, we have 
\beq\lb{old Lp}
L_{p,\alpha}(\sigma_{E})(\chi')
 =\prod_{v|p}Z_{v}^{\circ}(\chi'_{v}, \psi_{v})\cd
  { \pi^{2[F:\Q]} |D_{F}|^{1/2} \over 2\zeta_{F}(2)} \cd \cL(\iota V_{(A, \chi')})
  \eeq
for certain local factors $Z_{v}^{\circ}$.

Fix a finite-order character $$\chi\colon E^{\ts}\bks E_{\A^{\infty}}^{\ts} \cong \Gal(E^{\rm ab}/E) \to L(\chi)^{\ts}$$ satisfying  $\omega\cd\chi_{|\A^{\infty\ts}}=\one, $
and consider the map 
\beqq
j_{\chi} \colon \Y_{F} &\to \Y'\\
\chi_{F} &\mapsto \chi\cd \chi_{F} \circ N_{E_{\A^{\infty\ts}/\A^{\infty\ts}}}.
\eeqq
\begin{proof}[{Proof of Theorem \ref{A}}]
Let
\beq\lb{cratio}
C(\chi_{p}'):={ e_{p} (V_{(A, \chi')})
\over  \prod_{v\vert p }Z_{v}^{\circ}(\chi'_{v}, \psi_{v})}.\eeq
We show in Proposition \ref{propcratio} that this  is a constant in $C\in L$,  independent of $\chi'_{p}$.

Define 
\beq\lb{lplp}
\cL_{p}(V_{(A, \chi)}):=  {2  \zeta_{F}(2)\over \pi^{2[F:\Q] } |D_{F}|^{1/2}}   \cd  C\cd
 L(1, \sg_{v}, \ad) \cd  j_{\chi}^{*}L_{p, \alpha}(\sg_{E})
\eeq
a function in $\OO(\Y_{F})$. 
It is clear from the definition and \eqref{old Lp} that it satisfies the required interpolation property.
\end{proof}

\subsection{Equivalence of statements}\lb{sec:rev}
We now restate Theorem \ref{MT} in a form that directly generalises \cite[Theorem B]{1}. It is the form in which we will prove it, for convenience of reference.

 We retain the  setup of \S~\ref{statement}. 
 Let ${\rm d}_{F}$ be the   $\Gamma_{F}$-differential  defined before \cite[Theorem B]{1}. For all $v\nmid \infty$ let $dt_{v}$  be the measure on $E_{v}^{\ts}/F_{v}^{\ts}$ specified in  \cite[paragraph after (1.1.2)]{1} if $v\nmid \infty$ and the measure giving $\bC^{\ts}/\R^{\ts}$ volume $2$ if $v\vert \infty$.

\begin{theo}\lb{MT2}
Retain the assumptions of Theorem \ref{MT}, and fix a decomposition $(, )_{\pi}=\prod_{v}( ,)_{v}$, with $(1, 1)_{v}=1$ if $v\vert \infty$. 
Then  for all $f_{1}\in \pi$, $f_{2}\in \pi^{\vee}$,
\beq \lb{eqmt2}
\langle P(f_{1}, \chi), P^{\vee}(f_{2}, \chi^{-1})\rangle = 
{c_{E}}\cdot
\prod_{v|p}Z_{v}^{\circ}(  \chi_{v})^{-1}\cdot {\rm d}_{F} L_{p, \alpha}(\sigma_{A,E})(\chi)\cdot   \prod_{v\nmid \infty} Q_{(, )_{v}, dt_{v}}(f_{1}, f_{2}, \chi)\eeq
in $ \Gamma_{F} \hat{\otimes} L(\chi)$,  where
$$c_{E}:={   \zeta_{F}(2)\over  (\pi/2)^{[F:\Q]}|D_{E}|^{1/2}  L(1, \eta) } \in\Q^{\times}. $$
\end{theo}
\begin{lemm}\lb{is eq} Theorem \ref{MT2}  is equivalent to Theorem \ref{MT}. When every prime $v\vert p$ splits in $E$,  it specialises to \cite[Theorem B]{1}  as  corrected  in Appendix \ref{app err}. 
\end{lemm}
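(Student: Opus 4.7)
The proof will be a bookkeeping exercise: both Theorem \ref{MT2} and Theorem \ref{MT} are ways of writing the same identity, with the constants distributed differently, so the strategy is simply to expand all definitions in Theorem \ref{MT} and rearrange them into the form of Theorem \ref{MT2}.

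First, I would divide both sides of the identity in Theorem \ref{MT2} by $(f_3, f_4)_\pi = \prod_v (f_{3,v}, f_{4,v})_v$, so the left-hand side matches that of Theorem \ref{MT}. On the right, I split the product $\prod_v Q_{(, )_v, dt_v}(f_1, f_2, \chi)$ appearing (through the definition of $Q$) in Theorem \ref{MT} into archimedean and non-archimedean parts. The archimedean factors, using the choice $\operatorname{vol}(\C^\times/\R^\times, dt_v) = 2$ and $(1,1)_v = 1$, reduce to $(f_{1,v}, f_{2,v})_v$ at each infinite place, which are absorbed into the product over $v \nmid \infty$ when everything is rescaled by $(f_3, f_4)_\pi$. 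This accounts for the discrepancy between the products $\prod_v$ in Theorem \ref{MT} and $\prod_{v \nmid \infty}$ in Theorem \ref{MT2}.

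Next, I would rewrite $\mathcal{L}_p'(V_{(A, \chi)}, 0)$ in terms of $d_F L_{p, \alpha}(\sigma_{A, E})(\chi)$ using \eqref{lplp}. Since $\epsilon(A_E \otimes \chi) = -1$ forces $\mathcal{L}(V_{(A, \chi)}, 0) = 0$ (and hence both $\mathcal{L}_p(V)$ and $L_{p,\alpha}(\sigma_E)$ vanish at the central point), the differential is the product of the prefactor of \eqref{lplp} with $d_F L_{p, \alpha}(\sigma_E)(\chi)$. The factor $C$ evaluated at $\chi$ equals $e_p(V_{(A, \chi)}) / \prod_{v|p} Z_v^\circ(\chi_v)$ by definition \eqref{cratio}, so multiplying by $e_p(V_{(A, \chi)})^{-1}$ produces exactly the $\prod_{v|p} Z_v^\circ(\chi_v)^{-1}$ factor of Theorem \ref{MT2}. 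What remains is to check that the residual constant coming from \eqref{lplp}, together with the archimedean part of $\mathcal{L}(V_{(A, \chi)}, 0)$ hidden in the definition of $Q$, collapses to the constant $c_E = \zeta_F(2) / [(\pi/2)^{[F:\Q]} |D_E|^{1/2} L(1, \eta)]$. This reduces to elementary identities between $|D_F|$, $|D_E|$, and $L(1, \eta)$ (via $\zeta_E = \zeta_F \cdot L(s, \eta)$ and the conductor-discriminant formula), together with matching $\pi$-powers from the $[F:\Q]$ archimedean places.

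For the specialisation claim, I would check that when every $v|p$ splits in $E$, the definitions of $Z_v^\circ$, $Q_{(, )_v, dt_v}$, the measure $dt_v$, and the differential $d_F$ in Theorem \ref{MT2} coincide term-by-term with the ones in \cite[Theorem B]{1}, as modified in Appendix \ref{app err}. The central observation is that in the split case the new definitions introduced here reduce to the old ones, since the non-split subtleties (the local factor $\alpha_v$ acting on $W_v^+$ at a non-split prime, and its effect on $e_p$) do not arise; so Theorem \ref{MT2} reads off verbatim as the corrected statement. The main place where care is needed is the factor of $2$ mentioned in the lemma statement, which is the subject of Appendix \ref{app err}; once this is accounted for, the specialisation is immediate. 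The main obstacle in the whole proof is thus purely one of careful constant-tracking — in particular, reconciling the archimedean normalisations of $(, )_\pi$, $dt_v$, and the $\pi$-factors from $\mathcal{L}_v$ — rather than any substantive mathematical step.
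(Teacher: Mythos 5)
Your overall strategy — expanding the definitions from Theorem \ref{MT} and collecting constants to recover the shape of Theorem \ref{MT2} — is indeed the paper's approach, and the second assertion (the split specialisation) is immediate by inspection. However, your execution of the first assertion glosses over two specific points that are conceptual rather than ``elementary constant-tracking,'' and as written they would each introduce an error.

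First, you write that expanding via \eqref{lplp} exhibits ``the differential as the product of the prefactor of \eqref{lplp} with $d_F L_{p, \alpha}(\sigma_E)(\chi)$.'' This is off by a factor of $2$. The quantity $\cL_p'(V_{(A,\chi)},0)$ is the \emph{ordinary} differential of the pullback $j_\chi^*L_{p,\alpha}(\sigma_E)$ at $0\in\Y_F$, whereas Theorem \ref{MT2} uses the \emph{normal} derivative ${\rm d}_F$ along $\Y\subset\Y'$. The composite $\Y_F\xrightarrow{j_\chi}\Y'\to(\text{normal direction})$ is $\chi_F\mapsto\chi_{F|\A^\ts}^2$ (the norm $N_{E/F}$ restricted to $\A^\ts$ is squaring), so its tangent map is multiplication by $2$; hence ${\rm d}_F L_{p,\alpha}(\sigma_E)(\chi)=\tfrac12\,{\rm d}(j_\chi^*L_{p,\alpha}(\sigma_E))(\one)$. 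This $\tfrac12$ is not an ``elementary identity between $|D_F|$, $|D_E|$, $L(1,\eta)$''; it is the same subtlety identified in the erratum for Theorem B of \cite{1}, and your sketch as written omits it.

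Second, you split ``the product $\prod_v Q_{(,)_v, dt_v}(f_1,f_2,\chi)$ appearing (through the definition of $Q$) in Theorem \ref{MT}.'' But the measure $dt$ underlying $Q$ in \eqref{Qv intro} is the \emph{globally normalised} one (total volume $1$ on $E^\ts\backslash E_\A^\ts/\A^\ts$), while the $dt_v$ in Theorem \ref{MT2} are the fixed local measures of \cite[after (1.1.2)]{1} (with $\vol(\C^\ts/\R^\ts)=2$ at infinity). These are different measures, and since the linear form $Q_{(,),dt}$ is homogeneous in $dt$, switching from one to the other contributes the volume ratio $2|D_{E/F}|^{1/2}|D_F|^{1/2}\pi^{-[F:\Q]}L(1,\eta)$. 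This ratio does not drop out in your archimedean bookkeeping; you have conflated two measures that happen to share a symbol. With both the $\tfrac12$ and the volume ratio reinstated, the remaining $\zeta$-, $\pi$- and discriminant-factors do collapse to $c_E=1\cdot(\cdots)$ as you anticipated, but as it stands your proof would miscompute $c_E$.
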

\begin{proof} The second assertion is immediate; we prove the first one. First, we note that \eqref{eqmt2} is equivalent to 
\beq\lb{eqmt2b}
{\langle P(f_{1}, \chi), P^{\vee}(f_{2}, \chi^{-1})\rangle\over (f_{3},f_{4})_{\pi}} = 
{c_{E}}\cdot
\prod_{v|p}Z_{v}^{\circ}(  \chi_{v})^{-1}\cdot {\rm d}_{F} L_{p, \alpha}(\sigma_{A,E})(\chi)
\cdot   2^{-[F:\Q]}\prod_{v} {Q_{(, )_{v}, dt_{v}}(f_{1}, f_{2}, \chi) \over (f_{3,v}, f_{4,v})_{v}}
\eeq
 for any $f_{3}\in \pi$, $f_{4}\in \pi^{\vee}$ with $f_{3, \infty}=f_{4, \infty }=1$ and $(f_{3}, f_{4})_{\pi}\neq 0$ (the extra power of $2$ comes from the archimedean places). The left-hand side of \eqref{eqmt2b} is the same as that of the formula of Theorem \ref{MT}, and the product of the terms after the $L$-derivative in its right-hand side  equals  
$$2^{-[F:\Q]} {\prod_{v} dt_{v}\over dt} \cd  Q\left( {  f_{1} \ot  f_{2}\over f_{3}\ot f_{4}} ; \chi\right) =  2^{1-[F:\Q]} |D_{E/F}|^{1/2}|D_{F}|^{1/2}\pi^{-[F:\Q]}L(1, \eta) \cd Q\left( {  f_{1} \ot  f_{2}\over f_{3}\ot f_{4}} ; \chi\right),$$
because  the measure $\prod_{v} dt_{v}$ (respectively $dt$)  gives  $E^{\ts}\bks \A_{E}^{\ts}/\A^{\ts}$  volume $2 |D_{E/F}|^{1/2}|D_{F}|^{1/2} \pi^{-[F:\Q]}L(1, \eta)$ (respectively $1$).

Next, we have ${\rm d}_{F}L_{p, \alpha}(\sg_{E})(\chi)={1\over 2} {\rm d}( j_{\chi}^{*} L_{p, \alpha}(\sg_{E}) )(\one)$, and it is clear from comparing the interpolation properties that 
$$\prod_{v|p}Z_{v}^{\circ}(  \chi_{v})^{-1} \cd {1\over 2} {\rm d} \left( j_{\chi}^{*} L_{p, \alpha}(\sg_{E}) \right)(\one)= 
{\pi^{2[F:\Q] } |D_{F}|^{1/2} \over 2\zeta_{F}(2)} 
\cd {e}_{p}(V_{(A, \chi)})^{-1}\cd \cL_{p}'(V_{(A, \chi)}, 0).$$
 It follows that the right hand side of \eqref{eqmt2}  equals $c  \cd {e}_{p}(V_{(A, \chi)})^{-1}\cd \cL_{p}'(V_{(A, \chi)}, 0) \cd  Q\left( {  f_{1} \ot  f_{2}\over f_{3}\ot f_{4}} ; \chi\right),$ where 
$$c= {c_{E}} \cd {\pi^{2[F:\Q] } |D_{F}|^{1/2} \over 2\zeta_{F}(2)}  \cd  2^{1-[F:\Q]} |D_{E/F}|^{1/2}|D_{F}|^{1/2} \pi^{-[F:\Q]}L(1, \eta)=1.$$
\end{proof}

\section{Structure of the proof} \lb{sec3}
We review the formal structure of the proof in \cite{1}, dwelling only on those points where the arguments need to be modified or corrected. For an introductory description with some more details than given in \S~\ref{11}, see  \cite[\S 1.7]{1}.
 Readers interested in a  detailed understanding of the present section are advised to keep a copy of \cite{1} handy.

\subsection{Notation and setup}\lb{sec:not}  We very briefly review some notation and definitions  from \cite{1}, which will be used throughout the paper.

\paragraph{Galois groups} If $K$ is a perfect field, we denote by $\calG_{K}:=\Gal(\baar{K}/K)$ its absolute Galois group.

\paragraph{Local fields} For $v$ finite  a place of $F$, we denote by $\vpi_{v}$ a fixed uniformiser and by $q_{F, v}$ the cardinality of the residue field of $F_{v}$. We denote by $d_{v}$ a generator of the absolute different of $F_{v}$, by $D_{v}$ a generator of the relative discriminant of $E_{v}/F_{v}$ (equal to $1$ unless $v$ ramifies in $E$), and by $e_{v}$ the ramification degree of $E_{v}/F_{v}$. If $w\vert v$ is a place of $E$, we denote by $q_{v}\colon E_{v}\to F_{v}$ and $q_{w}\colon E_{w}\to F_{v}$ the relative norm maps.

We denote by 
$\psi=\prod_{v}\psi_{v}\colon F\bks  \A \to \bC^{\ts}$ the additive character fixed before Theorem \ref{A}.

\paragraph{Base-change of rings and schemes} If $R$ is a ring, $R'$ is an $R$-algebra, $M$ is an $R$-module and $S$ is an $R$-scheme, we denote  $M_{R'}=M\ot_{R}R'$, $S_{R'}= S\times_{\Spec R}\Spec R'$.

\paragraph{Groups, measures, integration} We adopt the same notation and  choices of measures as in \cite[\S 1.9]{1},  including a regularised integration $\int^{*}$. In particular $T:= {\rm Res}_{E/F}{\bf G}_{m, E}$, $Z={\bf G}_{m, F}$, and on the adelic points of $T/Z$ we use two measures $dt$ (the same as introduced above Theorem \ref{MT2}) and $d^{\circ}t$. \emph{The measure denoted by $dt$ in the introduction will not be used.}

\paragraph{Operators at $p$} Let $v\vert p$ be a place of $F$. We denote by $\vpi_{v}$ a fixed uniformiser at $v$. For $r\geq 1$ we let $K_{1}^{1}(\vpi_{v}^{r}) \subset \GL_{2}(\OO_{F, v})$ be the subgroup of matrices which become upper unipotent upon reduction modulo $\vpi^{r}$. We denote  by 
$$\Up_{v,*}=K^{1}_{1}(\vpi_{v}^{r}) \twomat 1{}{}{\vpi_{v}^{-1}}K^{1}_{1}(\vpi_{v}^{r}) , \qquad  \Up_{v}^{*}=K^{1}_{1}(\vpi_{v}^{r}) \twomat {\vpi^{r}}{}{}{1}K^{1}_{1}(\vpi_{v}^{r}) ,  $$
the usual double coset operators, and by 
$$w_{r,v}:= \twomat {}1{-\vpi_{v}^{r}}{}\in \GL_{2}(F_{v}).$$
We also let  $w_{r}:=\prod_{v\vert p}w_{r,v}\in \GL_{2}(F_{p})$ and, if $(\beta_{v})_{v\vert p}$ are characters of $F_{v}^{\ts}$, we denote $\beta_{p}(\vpi):=\prod_{v\vert p}\beta_{v}(\vpi_{v})$. 

\paragraph{Spaces of characters} We denote by $\Y_{F}$, $\Y'$, $\Y$ respectively the schemes  over $L$ defined in \eqref{YF}, \eqref{Y'} and the subscheme of $\Y$ cut out by the condition $\chi_{|\A^{\infty\ts}}=\omega^{-1}$. We add to this notation a superscript `l.c.' to denote the ind-subschemes of locally constant characters (which has a model over a finite extension of $M$ in $L$).

Let $\cI_{\Y/\Y'}$ be the ideal sheaf of $\Y\subset \Y'$. If $\cM$ is a coherent  $\OO_{\Y'}$-module, we denote 
$${\rm d}_{F}\colon \cM\ot_{\OO_{\Y'}} \cI_{\Y/\Y'} \to \cM \ot_{\OO_{\Y'}} \cI_{\Y/\Y'}/\cI_{\Y/\Y'}^{2} =\cM_{|\Y}\hat{\ot}\Gamma_{F}$$
the normal derivative (cf. the definition before \cite[Theorems B]{1}.)

\paragraph{Kirillov models} Let $\sg^{\infty}=\bigotimes_{v\nmid \infty}\sg_{v} $ be the $M$-rational  
automorphic representation of $\GL_{2}(\A)$ attached to $A$, and denote abusively still by $\sg^{\infty}$ its base-change to $L$. For every place $v$ the representations $\sg_{v}$ of $\B_{v}^{\ts}$ and $\pi_{v}$ of $\GL_{2}(F_{v})$ are Jacquet--Langlands correspondents. 

 For $v\vert p$, we denote by 
$$\mathscr{K}_{\psi_{v}}\colon \sg_{v} \to C^{\infty}(F_{v}^{\ts}, L)$$
 a fixed rational Kirillov model. 

\paragraph{Orthogonal spaces} We let  ${\bf V}:=\B$ equipped with the reduced norm $q$, a quadratic form valued in $\A$.  The image of $E_{\A}$ is a subspace   ${\bf V}_{1}$ ot the orthogonal space ${\bf V}$,  and we let ${\bf V}_{2}$ be its orthogonal complement. The restriction $q_{|{\bf V}_{1}}$ is the adelisation of the norm of $E/F$.
\paragraph{Schwartz spaces and Weil representation} If $\bf V'$ is any one of the above spaces, we denote by 
${\bcalS}({\bf V}'\ts \A^{\ts})=\bigotimes'_{v} {\bcalS}({\bf V}'_{v}\ts F_{v}^{\ts})$
  the Fock space  of Schwartz functions considered in \cite{1}.  (This differs from the usual Schwartz space only at infinity.) There is  a \emph{Weil representation} 
  $$r=r_{\psi}\colon \GL_{2}(\A)\ts  {\rm O}({\bf V}, q)\to \End {\bcalS}({\bf V}\ts \A^{\ts}),$$
  defined as in \cite[\S 3.1]{1}.  The orthogonal group of $\bf V$ naturally contains the product $T(\A)\ts T(\A)$ acting by left and right multiplication on ${\bf V}$. The Weil representation also depends on a choice of additive characters  $\psi$. The restriction  $r_{|T(\A)\ts T(\A)}$  preserves   the decomposition ${\bf V}_{1}\oplus{ \bf V}_{2}$, hence it accordingly decomposes as $r_{1}\oplus r_{2}$.

\paragraph{Special  data at $p$} We list the functions at the places $v\vert p$ that we use.

Define
\beq \lb{Wv}
W_{v}(y):= \one_{\OO_{F, v}-{0}}(y)|y|_{v} \alpha_{v}(y),\eeq
 the ordinary vector in the fixed Kirillov model $\mathscr{K}_{\psi_{v}}$ of $\sg_{A,v}$. We consider 
\beq\lb{vphip}
\vphi_{v}=\vphi_{v,r}:= \mathscr{K}_{\psi_{v}}^{-1} (\alpha_{v}(\vpi_{v})^{-r}w_{r} W_{v}) \in \sg_{v}.
\eeq

Now we consider Schwartz functions. We let $${\bf B}_{v}\cong M_{2}(F_{v})$$ be the indefinite quaternion algebra over $F_{v}$; this choice is justified \emph{a posteriori} by Corollary \ref{non exc B}.  The following choices  of functions correct and modify the ones  fixed in \cite{1} (cf. the Errata in Appendix \ref{app err}); note in particular that we will use two different functions on ${\bf V}_{2, v}$.  

Decompose orthogonally ${\bf V}_{v}={\bf V}_{1,v}\oplus {\bf V}_{2, v}$, where ${\bf V}_{1,v}=E_{v}$ under the fixed embedding $E_{\A^{\infty}}\into \B^{\infty}$.   We define the following  Schwartz functions on, respectively,  $F_{v}^{\ts}$ and its product with  ${\bf V}_{1,v}$, ${\bf V}_{2,v}$, ${\bf V}_{v}$:
\beq\lb{phip1}
\phi_{F,r}(u)&:=\delta_{1, U_{F, r}}(u),& \text{where }  \delta_{1, U_{F, r}}(u)&:= {\vol(\OO_{F,v}^{\ts})\over \vol(1+\vpi^{r}\OO_{F,v})}\one_{1+\vpi^{r}\OO_{F,v}}(u);\\
\phi_{1, r}(x_{1}, u)& := \delta_{1, U_{T,r}}(x_{1}) \delta_{1, U_{F,r}}(u),& \text{where }  \delta_{1, U_{T,r}}(x_{1}) &= {\vol(\OO_{E,v})\over \vol(1+\vpi^{r}\OO_{E,v})} \one_{1+\vpi^{r}\OO_{E,v}}(x_{1});\\
\eeq
and
\beq \lb{phip}
\phi_{2}^{\circ}(x_{2}, u) & := \one_{\OO_{{\bf V}_{2,v}}}(x_{2}) \one_{\OO_{F,v}^{\ts}}(u);\\
\phi_{2, r}(x_{2}, u)& :=  e_{v}^{-1}|d|_{v}\cd \one_{\OO_{{\bf V}_{2,v}}\cap q^{-1}(-1+\vpi^{r}\OO_{F, v})}(x_{2}) \one_{1+\vpi^{r}\OO_{F,v}}(u); &&\\
\phi_{r}(x, u)&:= \phi_{1, r}(x_{1},u) \phi_{2, r}(x_{2}, u).&&
\eeq

\paragraph{$p$-adic modular forms and $q$-expansions} In \cite[\S2]{1}, we have defined the notion of Hilbert automorphic forms and twisted Hilbert automorphic forms (the latter depend on an extra variable $u \in \A^{\ts}$). We have also defined the associated space of $q$-expansions, and a less redundant space of \emph{reduced} $q$-expansions. When the coefficient field is a finite extension $L$ of $\Q_{p}$ these spaces are endowed with a topology. We have an  (injective) reduced-$q$-expansion map on modular forms, denoted by 
$$\vphi'\mapsto {}^{\bf q}\vphi'.$$
The image of modular forms (respectively cuspforms)  of  level $K^{p}K^1_{1}(p^{\infty})\subset \GL_{2}(\A^{p\infty})$, parallel weight~$2$ and central character $\omega^{-1}$ is denoted by ${\bf M}={\bf M}(K^{p}, \omega^{-1})$ (respectively ${\bf S}$).  The closure of ${\bf M}$ (respectively ${\bf S}$)  in  the space of $q$-expansions with coefficients in $L$ is denoted  ${\bf M}'$, respectively ${\bf S}'$  and its elements are called $p$-adic modular forms (respectively cuspforms).

If $\Y^{?}=\Y_{F}, \Y'$, we define the notion of a $\Y^{?}$-family of modular forms by copying word for word \cite[Definition 2.1.3]{1}; the resulting  notion coincides with that of bounded families on the analogous rigid spaces considered in \emph{loc. cit.}

For a finite set of places $S$ disjoint from those above $p$, we have also defined a certain quotient  space $\baar{\bf S}'_{S}$ of cuspidal reduced $q$-expansions modulo those all of whose coefficients of index $a\in F^{\ts}\A^{S\infty\ts}$ vanish. According to \cite[Lemma 2.1.2]{1}, for any $S$ the reduced-$q$-expansion map induces an \emph{injection}
\beq\lb{bS'}
{\bf S}\into \baar{\bf S}_{S}'.
\eeq

\paragraph{$p$-adic Petersson product and $p$-critical forms}
For $\vphi^{p}\in \sg$, we defined in \cite[Proposition 2.4.4]{1} a functional 
\beq \lb{lfeq}
 \lf\colon {\bf M}(K^{p}, \omega^{-1}, L)\to L, \eeq
whose restriction to classical modular forms  equals, up to an adjoint $L$-value, the limit as $r\to \infty$  of Petersson products with antiholomorphic forms  $\vphi^{p}\vphi_{p,r}\in \sg$ with component $\vphi_{p, r}=\prod_{v\vert p}\vphi_{v, r}$ as in \eqref{vphip}. 

Let $v\vert p$. We say that a form or $q$-expansion over a finite-dimensional $\Q_{p}$-vector space $L$  is \emph{$v$-critical } if its coefficients $a_{*}$ (where $*\in \A^{\infty\ts}$)  satisfy
\beq\lb{defcrit}
a_{m\vpi_{v}^{s}}= O(q_{F, v}^{s})\eeq
in $L$, uniformly in $m\in \A^{\infty\ts}$. Here for two functions $f, g\colon \N\to L$, we write
$$f=O(g)\Longleftrightarrow \text{there is a constant $c>0$ such that } |f(s)|\leq  c|g(s)| \text{\ for all sufficiently large $s$.} $$

The space of \emph{$p$-critical} forms is the sum of the spaces of $v$-critical forms for $v\vert p$. Any element in those spaces is annihilated by $\lf$.

\subsection{Analytic kernel}
The analytic kernel is a  $p$-adic family of theta-Eisenstein series,  related to  the $p$-adic $L$-function. We review its main properties.

 \begin{prop}
There exist  $p$-adic families of $q$-expansions of modular forms $\cE$ over $\Y_{F}$ and $\cI$ over $\Y'$, satisfying: 
\begin{enumerate}
\item  For any $\chi_{F}\in \Y_{F}^{\rm l.c.}(\bC)$  and  any $r=(r_{v})_{v|p}$ satisfying $c(\chi_{F})\vert p^{r}$, 
 we have the identity  of $q$-expansions of  twisted modular forms of weight 1:
$$\mathscr{E}(u,\phi_{2}^{p\infty};\chi_{F})=  
|D_{F}| 
{L^{(p)}(1, \eta\chi_{F})\over L^{(p)}(1, \eta)}
\, {}^{\qqq} E_{r}(u, \phi_{2} ,\chi_{F}),$$
where 
\beq\lb{Er} E_{r}(g,u, \phi_{2} ,\chi_{F}):=\sum_{\gamma\in P^{1}(F)\bks SL_{2}(F)} \delta_{\chi_{F}, r} (\gamma gw_{r}) r(\gamma g) \phi_{2}(0,u)\eeq
is the Eisenstein series defined in \cite[\S 3.2]{1}, with respect to  $\phi_{2}=\phi_{2}^{p\infty}(\chi_{F})\phi^{\circ}_{2,p\infty}$ with $\phi_{2,v}^{\circ}$ as in \eqref{phip} for $v\vert p$, and $\phi_{2, v}(\chi_{v}) $ for $v\nmid \infty $ and   $\phi_{2, v}^{\circ}$ for $v\vert \infty$ as defined in \emph{loc. cit}.
 \item For $\phi_{1}\in \bcalS({\bf V}_{1}\ts {\A}^{\ts})$ and $\chi'\in \Y'^{\rm l.c.}$, consider the twisted modular form of weight 1 with parameter  $t\in E_{\A}^{\ts}$:
\beq \lb{theta def}
\theta(g,(t, 1) u, \phi_{1}):=\sum_{x_{1}\in  E} r_{1}(g, (t, 1))\phi_{1}(x, u).
\eeq
For any $\chi' \in \Y'^{\rm l.c.}$, let $\chi_{F} := \omega^{-1}\chi'_{|\A^{\ts} }\in \Y_{F}^{\rm l.c.}$. Then for any  $r=(r_{v})_{v|p}$ satisfying $r_{v}\geq 1$ and  $c(\chi_{F})\vert p^{r}$ 
we have
\beq\lb{ItE}
\calI(\phi^{p\infty}; \chi')=
{c_{U^{p}} |D_{E}|^{1/2} \over |D_{F}|^{1/2}} \, 
\int_{[T]}^{*}\chi'(t) 
 \sum_{u\in\mu_{U^{p}}^{2}\bks F^{\times}} {}^{\qqq}\theta((t,1),u, \phi_{1}; \chi') \mathscr{E}(q(t)u,\phi_{2}^{p\infty};\chi_{F})\, d^{\circ }t,
\eeq
where for $v\vert p$, $\phi_{1,v}=\phi_{1, v,r}$ is as in \eqref{phip1}.
\item We have
\begin{align}\label{def plf}
\lf(\calI(\phi^{p\infty}))=
L_{p, \alpha}(\sigma_{E})\cd \prod_{v\vert p} |d|^{2}_{v}|D|_{v}
 \prod_{v\nmid p \infty} \mathscr{R}_{v}^{\natural}(W_{v}, \phi_{v}, \chi_{v}')
\end{align}
where the local terms $\cR_{v}^{\natural} $ are as  in  \cite[Propositions 3.5.1, 3.6.1]{1}.
\end{enumerate}
\end{prop}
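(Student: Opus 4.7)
The plan is to establish the three parts by building on the constructions of \cite{1}, while carefully tracking the new local data at $p$ given in \eqref{phip1}--\eqref{phip} (differing from \cite{1}). For part (1), I would construct $\cE$ by defining, for each sufficiently large $r$, the classical rescaled Eisenstein $q$-expansion $|D_F| L^{(p)}(1,\eta\chi_F)/L^{(p)}(1,\eta)\cdot {}^{\qqq}E_r(u,\phi_2,\chi_F)$ attached to the Schwartz datum $\phi_2 = \phi_2^{p\infty}(\chi_F)\phi_{2,p\infty}^\circ$, and show that these fit into a $p$-adic family on $\Y_F$. Independence of $r$ for $r$ large with $c(\chi_F)\mid p^r$ is checked from the explicit Fourier coefficient formulas for $E_r$: the action of $w_r$ and the local degeneracy of $r(\gamma g)\phi_{2,v}^\circ(0,u)$ at $v\mid p$ make the coefficients stable under $r \mapsto r+1$. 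The $p$-adic interpolation of the coefficients over the weight-like variable $\chi_F \in \Y_F^{\rm l.c.}$ follows from the standard Kummer-type congruences of Hecke Eisenstein series, established in \cite[\S 3.2]{1}; one verifies that the new $\phi_{2,v}^\circ$ preserves the integrality properties used there.

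For part (2), I would define $\calI$ on $\Y'^{\rm l.c.}$ by the stated integral \eqref{ItE} and extend to a $p$-adic family. Write $\chi_F := \omega^{-1}\chi'_{|\A^\times}$, which makes sense on $\Y'$ by the central-character condition cutting out $\Y \subset \Y'$. The integral $\int^*_{[T]}$ is the regularised one of \cite[\S1.9]{1}; after expanding into reduced $q$-coefficients the regularisation reduces each individual coefficient to a finite sum, hence the integral is well-defined. The theta factor ${}^{\qqq}\theta((t,1),u,\phi_1;\chi')$ contributes the $p$-adic variation coming from $\phi_{1,v} = \phi_{1,v,r}$ in \eqref{phip1}, which together with the Eisenstein factor produces a coherent $p$-adic family over $\Y'$; this is the direct generalisation of \cite[\S 3.4]{1}, updated for the new $\phi_{1,r}$. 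The resulting $\calI(\phi^{p\infty};\chi')$ then interpolates \eqref{ItE} by construction.

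For part (3), I would compute $\lf(\calI(\phi^{p\infty}))$ by first evaluating it at classical $\chi' \in \Y'^{\rm l.c.}$. By the definition of $\lf$ in \cite[Proposition 2.4.4]{1}, it amounts to a Petersson product with the form $\varphi^p \varphi_{p,r}$ with component \eqref{vphip}; the Waldspurger/Rankin--Selberg unfolding of this pairing against the theta-Eisenstein kernel \eqref{ItE} produces the central $L$-value $L(1/2,\sigma_E \otimes \chi')$ as a product of local orbital integrals. Dividing the unramified places away from $p$ by the fixed factors $\cR_v^\natural$ of \cite[Propositions 3.5.1, 3.6.1]{1} yields the $p$-adic $L$-function $L_{p,\alpha}(\sigma_E)$ up to the constant $\prod_{v\mid p}|d|_v^2|D|_v$ coming from the local integral at $p$ with the new Schwartz data $\phi_{1,r},\phi_{2,r}$ (and from the normalisation of $\lf$ at $p$). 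By Zariski-density of $\Y'^{\rm l.c.}$ in $\Y'$, the identity \eqref{def plf} extends to the whole family.

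The main obstacle is the bookkeeping at $p$: the new choices of Schwartz data in \eqref{phip} use \emph{two} different functions on ${\bf V}_{2,v}$ (the $\phi_2^\circ$ used in part (1) and the $\phi_{2,r}$ appearing later via the auxiliary kernel), so one must verify that substituting $\phi_{2,r}$ for $\phi_2^\circ$ in the relevant Rankin--Selberg local integral at $v\mid p$ produces precisely the constant $|d|_v^2|D|_v$ asserted in \eqref{def plf}, and that the $w_r$-twist in \eqref{vphip} conspires correctly with the $w_r$ in \eqref{Er}. These are local computations at $p$, individually elementary but the one point where the modifications to \cite{1} (recorded in Appendix \ref{app err}) genuinely change the constants.
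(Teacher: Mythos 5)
Your proposal is correct and describes exactly the constructions the paper invokes. Note, however, that the paper's own proof is a one-line citation — Part~1 is \cite[Proposition 3.3.2]{1}, Part~2 summarises \cite[\S 3.4]{1}, and Part~3 is \cite[(3.7.1)]{1} together with the correction in Appendix~\ref{app err} that introduces the factor $\prod_{v\mid p}|d|_v^2|D|_v$ (traced there to the Weil-representation computation of $r(w_r^{-1})\phi$ in \cite[Proposition A.2.2]{1}) — so the local analysis you outline, including the two distinct Schwartz functions on $\mathbf{V}_{2,v}$ and the interaction of the $w_r$-twists, is precisely what is contained in those references rather than new work to be supplied in this paper.
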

\begin{proof} Part 1 is \cite[Proposition 3.3.2]{1}. Part 2 summarises \cite[\S 3.4]{1}. Part 3 is \cite[(3.7.1)]{1} with the correction of  Appendix B below.
\end{proof}

\paragraph{Derivative of the analytic kernel}
We denote 
\beq \lb{eqderan}
\cI'(\phi^{p\infty};\chi):= {\rm d}_{F}\cI(\phi^{p\infty};\chi), \eeq
  a $p$-adic modular form with coefficients in $\Gamma_{F}\hat{\ot} L(\chi)$.

\subsection{Geometric kernel} The geometric kernel function, see \cite[\S\S5.2-5.3]{1}, is related to the  heights of Heegner points. We recall its construction and modularity. 

\subsubsection{CM divisors} For any $x\in \B^{\infty \ts}$, we have a Hecke translation ${\rm T}_{x}\colon X\to X$, and a Hecke correspondence $Z(x)_{U}$ on $X_{U }\ts X_{U}$.
Fix any $P\in X^{E^{\ts}}(E^{\rm ab})$, and for $x\in \B^{\infty\ts}$, let $[x]:={\rm T}_{x}P$ be the Hecke-translate of $P$ by $x$, and let $[x]_{U}$ be its image in $X_{U}$.  If $H/E$ is any finite extension,  the points in $X_{U, H}$ corresponding to Galois orbits of points of the form $[x]_{U}$ are called \emph{CM points} (for the CM field $E$).

Let  ${\rm Cl}(X_{U,{\baar{F}}})_{\Q}\supset {\rm Cl}^{0}(X_{U,{\baar{F}}})_{\Q}$ be the space of divisor classes with $\Q$-coefficients and, respectively, its subspace consisting of classes with degree $0$ on  every connected component. Denote by   ${(\ )}^{0}\colon {\rm Cl}(X_{U,{\baar{F}}})_{\Q}\to{\rm Cl}^{0}(X_{U,{\baar{F}}})_{\Q}$  the linear section of the inclusion  whose kernel is spanned by  the pushforwards to $X_{U, \baar{F}}$ of the  classes  of the canonical bundles  of the connected components of   $X_{U', \baar{F}} $, for any sufficiently small $U'$. 

We define the $\chi$-isotypic CM divisors  
\beqq 
t_{\chi} &:=\int^{*}_{[T]}\chi(t) [t^{-1}]^{}_{U}d^{\circ}t   \in \Div(X_{U, \baar{F}})_{L(\chi)}, \\
t_{\chi}^{0} &:=\int^{*}_{[T]}\chi(t) [t^{-1}]^{0}_{U}d^{\circ}t   \in \Div^{0}(X_{U, \baar{F}})_{L(\chi)} ,\eeqq
where  the integrations simply reduce to (normalised) finite sums.

\subsubsection{Generating series} For $a\in \A^{\infty\ts}$, $\phi^{\infty}\in \bcalS({\bf V}\ts \A^{\ts})$,  consider the correspondences 
\begin{equation}\label{za}
{\wtil Z}_{a}(\phi^{\infty}):= c_{U^{p}}w_{U} |a| \sum_{x\in U \bks \B^{\infty\times}/ U }\phi^{\infty}(x, aq(x)^{-1}) Z(x)_{U}
\end{equation}
where $w_{U}=|\{\pm 1\}\cap U| $ and $c_{U^{p}}$ is defined in  \cite[(3.4.3)]{1}.  By \cite[Theorem 5.2.1]{1} (due to Yuan--Zhang--Zhang),
there is an automorphic form  
\beq\lb{zphi}
{\wtil Z}(\phi^{\infty}) \quad \in \quad  C^{\infty}(\GL_{2}(F)\bks \GL_{2}(\A), \bC)\ot_{\Q} {\rm Pic}(X_{U}\ts X_{U})_{\Q},\eeq whose $a^{\rm th}$ reduced coefficient is the image of  ${\wtil Z}_{a}(\phi^{\infty})$ for each $a\in \A^{\infty\ts}$.

Let
\beq
\lb{htJ}
\la\ , \ \ra=\la\ , \ \ra_{X}\colon  J^{\vee}(\baar{F}) \ts J(\baar{F}) \to \Gamma_{F}\hat{\ot} L
\eeq
be the $p$-adic height pairing  defined as in\footnote{There is a typo in \emph{loc. cit.} (also noted in Appendix \ref{app err} below):  the left-hand side of the last equation in the statement should be $\la f_{1}'(P_{1}), f_{2}'(P_{2})\ra_{J, *}$.} \cite[Lemma 5.3.1]{1}. (We abusively omit the subscript $X$ as we will no longer need to use the pairing on $A_{E}(\chi)\ot A_{E}^{\vee}(\chi)$.)

We define the \emph{geometric kernel} to be
\beq\lb{the series}
{\wtil Z}(\phi^{\infty},\chi):=
\sum_{a\in F^{\ts}} \la {\wtil Z}_{a}(\phi^{\infty})[1]_{U}^{0}, t^{0}_{\chi} \ra\, {\bf q}^{a}.
\eeq
By [I, Proposition 5.3.2 and  formula following its proof],  the series 
$ {\wtil Z}(\phi^{\infty},\chi)$
is (the $q$-expansion of) a weight-$2$ cuspidal Hilbert modular form  of central character $\omega^{-1}$, with coefficients in $\Gamma_{F}\hat{\ot} L(\chi)$.

\subsubsection{Geometric kernel and Shimizu lifts}
Let
$$\theta_{\iota_{\frakp}}\colon( \sigma^{\infty}\otimes \bcalS({\bf V}^{\infty}\times \A^{\infty,\times}))\otimes_{M}L\to (\pi\otimes \pi^{\vee})\otimes_{M, \iota_{\frakp}} L$$
be   Shimizu's theta lifting  defined in \cite[\S 5.1]{1}.  Let 
 $${\rm T}_{\rm alg}\colon \pi^{U}\otimes_{M} \pi^{\vee, U} \to \Hom(J_{U}, J_{U}^{\vee})\ot {M}$$
 be defined by ${\rm T}_{\rm alg}(f_{1}, f_{2}):= f_{2}^{\vee}\circ f_{1}$.

\begin{prop} \lb{def Z}
 If $\phi_{v}=\phi_{r,v}$ is as in \eqref{phip} for all $v\vert p$, then for any sufficiently large $r'$, the geometric kernel  
 $${\wtil Z}(\phi^{\infty},\chi)$$
  is invariant under $\prod_{v\vert p}K_{1}^{1}(\vpi_{v}^{r'})$, and it satisfies
\begin{align}\lb{Zgd}
 \lf(\tZ(\phi^{\infty}, \chi))=2 |D_{F}|^{1/2} |D_{E}|^{1/2}    L(1, \eta)
 \cdot 
 \langle {\rm T}_{\rm alg, \iota_{\frakp}}(\theta_{\iota_{\frakp}}(\vphi,\alpha_{p} |\cd |_{p}(\vpi)^{-r'}w_{r'}^{-1}\phi)) P_{\chi}, P_{\chi}^{-1}\rangle_{X}.
 \end{align}
\end{prop}
\begin{proof}
 The invariance under  $\prod_{v\vert p}K_{1}^{1}(\vpi_{v}^{r'})$ follows from the invariance of $\phi_{r}$ under the action of $\smalltwomat 1{\OO_{F, p}}{}{1}$ and the continuity of the Weil representation.
The proof of \eqref{Zgd} is indicated in \cite[proof of Proposition 5.4.3]{1} (with the correction of Appendix \ref{app err}).
\end{proof}

\subsection{Kernel identity}  We state our kernel identity and recall how it implies the main theorem.

\paragraph{Assumptions on the data} 
Consider the following local assumptions on the data at primes above $ p$.

\begin{enonce}[remark]{Assumption} \lb{assp1} {Let $U_{F,v}^{\circ}:=1+\vpi_{v}^{n}\OO_{F,v}$ with $n\geq 1$ be such that $\omega_{v}$ is invariant under $U_{F, v}^{\circ}$.  The character $\chi_{p}$ is \emph{sufficiently ramified} in the sense that it is nontrivial on $$V_{p}^{\circ}:=\prod_{v\vert p}q_{v|\OO_{E,v}}^{-1}(U_{F,v}^{\circ})\subset \OO_{E,p}^{\ts}.$$
(Recall from \S~\ref{sec:not} that $q_{v}\colon E_{v}\to F_{v}$ is the norm map.)}
\end{enonce}

Under this assumption, we  have  $t_{\chi}=t_{\chi}^{0}$; see   [I, Proposition 8.1.1.3], where $\xi_{U}\in {\rm Cl}(X_{U})_{\Q}$ denotes the \emph{Hodge} class defining the section ${\rm Cl}(X_{U})_{\Q}\to {\rm Cl}^{0}(X_{U})_{\Q}$. The technical advantage gained, which is the same as in [I] and is implicitly reaped in Theorem \ref{theo local comp} below, is that one may analyse the height generating series purely in terms of pairs of  CM divisors of degree zero, thus  avoiding a study of $\xi_{U}$ and the recourse to $p$-adic Arakelov theory made in \cite{dd-ant}.

\begin{enonce}[remark]{Assumption} \lb{assp2} {For each $v \vert p$, the open compact $U_{v}\subset \B_{v}^{\ts}$ 
satisfies:
\begin{itemize}
\item
 $U_{v}= U_{v,r}=1+\vpi^{r}M_{2}(\OO_{F,{v}})$ for some $r\geq 1$;
\item the integer $r\geq n$ is sufficiently large so that the characters 
 $\chi_{v}$ and $\alpha_{v}\circ q_{v}$ of $E_{v}^{\ts}$  are invariant under $U_{v,r}\cap \OO_{E,v}^{\ts}$.
\end{itemize}}
\end{enonce}

\paragraph{Convention on citations from \cite{1}} In \cite{1}, we have denoted by   $S_{\text{nonsplit}}$ the set  of places of $F$ nonsplit in $E$, and by  $S_{p}$ the set  of places of $F$ above $p$. When referring to results from \cite{1}, we henceforth stipulate that one should read any assumption such as   `let $v\in S_{\text{nonsplit}}$'  or `let $v$ be a place in $F$ nonsplit in $E$' as `let $v\in S_{\text{nonsplit}} -S_{p}$'.  Similarly, the set $S_{1}$ fixed in \cite[\S 6.1]{1} should be understood to consist only of places not above $p$.

\begin{theo}[Kernel identity] \lb{ker id}
Assume the hypotheses  of Theorem \ref{MT}, and that   $U$, $\vphi^{p}$, $\phi^{p\infty}$, $\chi$, $r$ satisfy   the assumptions of \cite[\S 6.1]{1} as well as  Assumptions \ref{assp1}, \ref{assp2}.  Let $\phi_{p}:= \ot_{v\vert p}\phi_{v, r}$ with $\phi_{v,r}=$\eqref{phip}. Then 
$$\lf({\rm d}_{F}\calI(\phi^{p\infty}; \chi))= {2 |D_{F}|  L_{(p)}(1, \eta)}\cdot   \lf(\tZ(\phi^{\infty}, \chi)).$$
\end{theo}
The elements of the proof will be gathered in \S~\ref{35}. 
\begin{lemm}\lb{implic}  Theorem  \ref{ker id} implies Theorem \ref{MT2}.
\end{lemm}
\begin{proof}
 As in \cite[Proposition 5.4.3]{1} corrected in Appendix \ref{app err}, we consider the following equivalent  (by \cite[Lemma  5.3.1]{1}) form of  the identity of Theorem \ref{MT2}:
\begin{align}\label{315}
\langle {\rm T}_{\rm alg, \iota_{\frakp}}(f_{1}\otimes f_{2}) P_{\chi}, P_{{\chi}^{-1}}\rangle_{J}
={      \zeta_{F}^{\infty}(2)\over  (\pi^{2}/2)^{[F:\Q]}  |D_{E}|^{1/2}  L(1, \eta)} 
\prod_{v|p}Z_{v}^{\circ}( \alpha_{v}, \chi_{v})^{-1} 
\cdot{\rm d}_{F} L_{p, \alpha}(\sigma_{A,E})( \chi)\cdot Q(f_{1}, f_{2}, \chi)
\end{align}
where $\iota_{\frakp}\colon M\into L(\chi)$, 
and
$P_{\chi} =\dashint_{[T]} {\rm T}_{t}(P-\xi_{P})\chi(t)\, dt \in J(\baar{F})_{L(\chi)}$. By linearity, \eqref{315} extends to an identity that makes sense for any element  ${\bf f}\in \pi\ot \pi^{\vee}$.  By the multiplicity-one  result for $E_{\A^{\infty}}^{\ts}$-invariant linear functionals on each of  $\pi$, $\pi^{\vee}$, it suffices to prove \eqref{315} for one element ${\bf f}\in \pi\ot \pi^{\vee}$ such that $Q({\bf f} , \chi)\neq 0$ (cf. \cite[Lemma 3.23]{yzz}). 

We \emph{claim} that Theorem \ref{ker id} gives  \eqref{315} for ${\bf f}=\theta(\vphi, \phi)$, where:
\begin{itemize}
\item $\vphi_{\infty}$ is standard antiholomoprhic in the sense of \cite{1},  $\phi_{\infty}$ is standard in the sense of \cite{1}; 
\item for all $v\vert p$,  $\vphi_{v}=$\eqref{vphip} and $\phi_{v}=\phi_{v,r}=$\eqref{phip} for any sufficiently large $r$.
\end{itemize}
The claim follows  from  \eqref{def plf}, \eqref{Zgd}, the local comparison  between $\cR_{v}^{\natural} $ and $Q_{v}\circ\theta_{v}$ for $v\nmid p$ of  \cite[Lemma 5.1.1]{1},  and the local calculation at $v\vert p$ of  Proposition \ref{Qp}. 

Finally, the existence of $\vphi, \phi$ satisfying both the required  assumptions and   $Q(\theta(\vphi, \phi))\neq 0$ follows from \cite[Lemma 6.1.6]{1} away from $p$, and the explicit formula of Proposition \ref{Qp} at $p$. 
\end{proof}

\subsection{ Derivative of the analytic  kernel} 
 We start by studying  the  incoherent Eisenstein series $\mathscr{E}(\phi_{2}^{p\infty})$. For $a\in F_{v}^{\ts}$, denote by 
 $$W_{a , v}^{\circ}$$
  the normalised local Whittaker function of $E_{r}(\phi_{2}^{p\infty}(\chi_{F}){\phi_{2,p}^{\circ}}, \chi_{F})=\eqref{Er}$, defined as in \cite[Proposition 3.2.1 and paragraph following its proof]{1}.
 
  The following  reviews and corrects   \cite[Proposition 7.1.1]{1}. 

\begin{prop}\label{6.1-2-3}  For each $v\vert p$, let  $\phi_{2, v}=\phi_{2, r,v}$ be as in \eqref{phip}.
\begin{enumerate} 
\item\label{6.1} Let $v$ be a  place of $F$ and $a\in F_{v}^{\times}$.
	\begin{enumerate}
	\item If $a$ is not represented by $({\bf V}_{2,v}, uq)$ then $W_{a,v}^{\circ}(g,u, \one)=0$.
	\item\label{(b)} (Local Siegel--Weil formula.)\quad If $v\nmid p$ and  there exists  $x_{a}\in {\bf V}_{2,v}$ such that $uq(x_{a})=a$, then 
	$$W_{a,v}^{\circ}(\smallmat y{}{}1,u, \one)= \int_{E_{v}^{1}}r(\smallmat y{}{}1, h)\phi_{2,v}(x_{a},u)\, dh$$
	\item\label{(c)} (Local Siegel--Weil formula at $p$.) \quad  If $v\vert p$, $a\in -1+\vpi^{r}\OO_{F,v}$, $u\in 1+\vpi^{r}\OO_{F,v} $, let  $x_{a}\in {\bf V}_{2,v}$ be such that $uq(x_{a})=a$. Then 
\beq\lb{Wap}
W_{a,v}^{\circ}(\smallmat y{}{}1,u, \one)=  |d|_{v}\int_{E_{v}^{1}}r(\smallmat y{}{}1, h)\phi_{2,v}(x_{a},u)\, dh.\eeq
\end{enumerate}
\item\label{6.3} For any   $a\in F_{v}^{\ts}\cap \prod_{v\vert p }( -1+\vpi^{r}\OO_{F,v})$, $u\in F^{\ts}\cap\prod_{v\vert p} (1+\vpi^{r}\OO_{F,{v}})$, there is a place $v\nmid p$ of $F$ such that $a$ is not represented by $({\bf V}_{2}, uq)$.
\end{enumerate}
\end{prop}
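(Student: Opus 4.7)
Parts \textup{(1a)} and \textup{(1b)} are essentially restatements of the corresponding portions of \cite[Proposition 7.1.1]{1}: they concern places $v\nmid p$, where the Schwartz functions $\phi_{2,v}$ have not been changed in the present paper relative to \emph{ibid.}, so the proofs go through verbatim. (Assertion \textup{(1a)} follows from writing $W_{a,v}^{\circ}$ as a Fourier integral of a lift of $\phi_{2,v}$ and noting that the integrand is supported where $q(x_2)u=a$; assertion \textup{(1b)} is the classical local Siegel--Weil identity for standard sections attached to a Schwartz function.)

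For the new case \textup{(1c)}, my plan is to compute both sides of \eqref{Wap} explicitly, using that the modified $\phi_{2,v}$ from \eqref{phip} is essentially (up to the normalising constant $e_v^{-1}|d|_v$) the characteristic function of the preimage under $uq$ of a small neighbourhood of~$a$. First, unwinding the definition of the normalised Whittaker function of $E_r(\phi_2, \chi_F)$ from the paragraph after \cite[Proposition 3.2.1]{1}, I would express $W_{a,v}^{\circ}(\smalltwomat y{}{}1, u, \one)$ as a Fourier integral
\[
\int_{F_v} r\!\left(w n(b)\smalltwomat y{}{}1\right)\phi_{2,v}(0,u)\,\psi_v(-ab)\, db
\]
suitably normalised. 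The intertwiner $r(w)$ produces a Fourier transform in the first variable, and the resulting integral collapses to a sum/integral over $\{x_2 : uq(x_2) \in -1 + \vpi^r\OO_{F,v}\}$. Second, I would directly compute the integral on the right-hand side of \eqref{Wap}: the $E_v^1$-orbit of a fixed $x_a$ consists precisely of elements of ${\bf V}_{2,v}$ of the same norm, and thanks to the $e_v^{-1}$ factor the orbital integral reproduces exactly a characteristic function with the correct normalisation. Comparing, the residual discrepancy is the single factor $|d|_v$, arising from the self-duality normalisation of $\psi_v$ (whose level is $d_v^{-1}$). The main technical point will be tracking the ramification factor $e_v$ correctly in the non-split case, which is precisely why the constant $e_v^{-1}|d|_v$ was inserted into \eqref{phip}.

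For part \textup{(2)}, the argument is global and uses the incoherence of the orthogonal space ${\bf V}_2$. Since $\Sigma_{\bf B}$ has odd cardinality, the global quadratic space ${\bf V}_2$ (a quadratic form over $\A$ whose local components are the norm forms of the local orthogonal complements of $E_v$ in $\B_v$) has nontrivial Hasse invariant: it is not the adelisation of a global quadratic space over $F$. By Hasse--Minkowski, for any fixed $a\in F^\times$ the set of places where $({\bf V}_2, uq)_v$ fails to represent $a$ has odd cardinality, in particular is nonempty. It thus suffices to check representability at every $v\vert p$ and every $v\vert \infty$ under the hypotheses. At $v\vert p$, the choice ${\bf B}_v\cong M_2(F_v)$ means ${\bf V}_{2,v}$ is isotropic and represents every nonzero element of $F_v$, so $a$ is represented. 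At $v\vert\infty$, the hypothesis $a\in -1+\vpi^r \OO_{F,v}$ for all $v\vert p$ and $u\in 1+\vpi^r\OO_{F,v}$ determine the sign of $a/u$ at the archimedean places up to an error bounded by $p^r$; since the local orthogonal spaces ${\bf V}_{2,v}$ at infinite places are definite of the appropriate sign to represent $-1$, they represent $a/u$. Hence the place where representability fails must be finite and not above $p$, as required.

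The main obstacle will be the bookkeeping in \textup{(1c)}, specifically getting the normalisations and ramification factors to match up exactly; but once the correct identification of $\phi_{2,v}$ with the characteristic function of a sphere is made, both sides admit explicit expressions and the comparison is mechanical.
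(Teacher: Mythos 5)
Your treatment of parts (1a), (1b), (1c) matches the paper: (1a)--(1b) are unchanged from \cite[Proposition 7.1.1]{1}, and (1c) is handled by computing both sides explicitly in a normalised Fourier/orbital integral, tracking the factor $e_v^{-1}|d|_v$ built into $\phi_{2,r,v}$. That is the same route the paper takes (the paper points to \cite[proof of Proposition 6.8]{yzz} for the left side and records the closed-form answer).

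However, part (2) contains a genuine error precisely in the case the paper cares about. You claim that for $v\vert p$ the isomorphism ${\bf B}_v\cong M_2(F_v)$ makes ${\bf V}_{2,v}$ isotropic and hence makes it represent every nonzero element of $F_v$. This is only true if $v$ is \emph{split} in $E$. If $v\vert p$ is non-split in $E$ (the central case of the whole paper), then ${\bf V}_{2,v}=\mathrm{j}_v E_v$ and $q(\mathrm{j}_v x)=q(\mathrm{j}_v)\,N_{E_v/F_v}(x)=-N_{E_v/F_v}(x)$, which is \emph{anisotropic} since $E_v/F_v$ is a field; its image is $-N_{E_v/F_v}(E_v^\times)$, a proper subgroup of $F_v^\times$. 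Representability of $a$ at such $v$ is therefore not automatic from splitness of ${\bf B}_v$: it holds only because $q(\mathrm{j}_v)=-1$ lies in the image, and the hypotheses $a\in -1+\vpi^r\OO_{F,v}$, $u\in 1+\vpi^r\OO_{F,v}$ force $a/u$ to be $p$-adically close to $-1$, so local constancy of $q_v$ (after possibly enlarging $r$) puts $a/u$ in the image. This is exactly the paper's observation, and it is the entire reason the proposition restricts $a$ and $u$ to those neighbourhoods. Your reasoning, if correct, would make those hypotheses unnecessary.

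Secondarily, your discussion of the archimedean places is not needed and is not convincing as written: the $p$-adic condition $a\in -1+\vpi^r\OO_{F,v}$ says nothing about $\mathrm{sign}(a/u)$ at a real place. The cleaner way to finish, once representability at all $v\vert p$ is established, is to invoke the incoherence/odd-parity statement (\cite[Lemma 6.3]{yzz}, which the paper cites) to deduce a non-representing place $v\nmid p$; your assertion that the failure set has odd cardinality ``by Hasse--Minkowski'' is in the right spirit but is itself unjustified as stated.
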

\begin{proof} Parts \ref{6.1}(a)-(b) are  as in \cite[Proposition 7.1.1]{1}.  Before continuing, observe that, under our assumptions, $a$ is always represented by  $({\bf V}_{2,v}, uq_{v})$ for all $v\vert p$:  this is clear if $v$ splits in $E$ and, up to possibly enlarging the integer $r$, it  may be seen  by   the local constancy of $q_{v}$ and the  explicit identity $q_{v}({\frak j}_{v})=-1$, where ${\frak j}_{v}$=\eqref{jv} below if $v$ is nonsplit. Then part \ref{6.1}(c) follows by explicit computation of both sides (starting e.g. as in \cite[proof of Proposition 6.8]{yzz} for the left-hand side). Explicitly, 
we have
$$\eqref{Wap}=
\begin{cases} 
e_{v}^{-1}|d|_{v}\vol(E^{1}_{v}\cap \OO_{E,v}^{\ts}, dh)|y|^{1/2} \one_{\OO_{F,v}}(ay)\one_{\OO_{F,  v}^{\ts}}(y^{-1}u)
  & \textrm{\ if \ } v(a)\geq 0 \textrm{\ and \ } v(u)=0 \\  
 0 & \textrm{\ otherwise.} 
\end{cases}$$

Finally, part 2 follows from the observation of the previous paragraph and \cite[Lemma 6.3]{yzz}.
\end{proof}

\begin{lemm}\lb{l th} Suppose that, for all $v\vert p$, $\phi_{1,v}=\phi_{1,r,v}$ is as in \eqref{phip1}. Then, for any $t\in T(\A)$,
the $a^{\rm th} $ $q$-expansion coefficient of the the theta series  of \eqref{theta def} vanishes unless
$$a\in \bigcap_{v\vert p} 1+\vpi^{r}\OO_{F,v}.$$
\end{lemm}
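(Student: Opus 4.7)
The plan is to extract the $b$-Fourier expansion of the theta series $\theta(\cdot, (t,1)u, \phi_1)$ using the explicit Weil-representation formula for the upper-triangular unipotents, and then read off the support conditions coming from \eqref{phip1} at the places above $p$.

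First, I would use the standard formula
\[
r_1(\smalltwomat{1}{b}{}{1}, (t,1))\phi_1(x_1, u) = \psi\bigl(b \cdot u' q(x_1')\bigr) \cdot r_1(1, (t,1))\phi_1(x_1, u),
\]
where $q = \Nm_{E/F}$ is the norm on $\mathbf{V}_1 = E$ and $(x_1', u')$ denotes the pair of arguments of $\phi_1$ produced by the action of $(t,1)\in T\times T$. Summing over $x_1 \in E$, the $a$-th Fourier coefficient only picks up those $x_1$ satisfying $u' q(x_1') = a$ and $\phi_{1,r,v}(x_{1,v}', u_v')\neq 0$ at every $v\vert p$. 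By \eqref{phip1} the second condition forces $x_{1,v}' \in 1+\vpi_v^r\OO_{E,v}$ and $u_v' \in 1+\vpi_v^r\OO_{F,v}$ for each such $v$.

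Next, I would apply the elementary identity: for any $y = 1 + \delta$ with $\delta \in \vpi_v^r\OO_{E,v}$,
\[
q(y) = 1 + \Tr_{E_v/F_v}(\delta) + q(\delta) \in 1+\vpi_v^r\OO_{F,v},
\]
using $\Tr_{E_v/F_v}(\OO_{E,v}) \subset \OO_{F,v}$; this is valid whether $v$ is split or non-split in $E$ (in the split case $q(\delta_1,\delta_2) = \delta_1\delta_2$ and $\Tr(\delta_1,\delta_2) = \delta_1+\delta_2$). Applied to $y = x_{1,v}'$ and multiplied by $u_v' \in 1+\vpi_v^r\OO_{F,v}$, this gives $a_v = u_v' q(x_{1,v}') \in 1+\vpi_v^r\OO_{F,v}$, which is the claimed containment.

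No serious obstacle is expected: the argument is pure bookkeeping of supports under the Weil representation. The only mild subtlety is that the exact shape of $(x_1', u')$ depends on the convention for the action of $(t,1)\in T\times T$ on $\bcalS(\mathbf{V}_1 \times \A^\times)$ used in \cite[\S 3.1]{1}; but in all of these conventions both $u_v'$ and $q(x_{1,v}')$ are forced into $1+\vpi_v^r\OO_{F,v}$ by the support of $\phi_{1,r,v}$, so the conclusion holds uniformly in $t\in T(\A)$.
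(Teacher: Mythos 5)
Your proof is correct and is precisely the ``straightforward'' support computation that the paper's one-word proof alludes to: the Weil action of the upper unipotent forces $u\,q(x_1)=a$ on the $a$-th coefficient, and the support of $\phi_{1,r,v}$ at $v\vert p$, combined with the invariance of $u\,q(x_1)$ under the $(t,1)$-action, pins $a_v \in 1+\vpi_v^r\OO_{F,v}$. Nothing to add.
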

\begin{proof} Straightforward. \end{proof}

Denote   $$\Up_{p, *}^{r}:= (\prod_{v\vert p}\Up_{v, *})^{r} , $$ 
an operator on modular forms that  extends to an operator on all the spaces of $p$-adic $q$-expansions defined so far.

\begin{coro}\label{6.2} Suppose that $\phi^{p\infty} \in {\calS}({\bf V}_{2}^{p\infty}\times {\A^{p\infty, \times}})$ satisfies the assumptions of \cite[\S 6.1]{1}. 
  Let $\chi\in \Y_{\omega}^{\rm l.c.}$. For any sufficiently large $r$, we have
$$\Up_{p, *}^{r} \mathscr{I}(\phi^{p\infty}; \chi)=0.$$
\end{coro}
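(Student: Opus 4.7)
The approach is to show that every Fourier coefficient of $\Up_{p,*}^r \calI(\phi^{p\infty};\chi)$ vanishes.  Writing $\calI$ via \eqref{ItE} as an integral over $[T]$ and a sum over $u \in \mu_{U^p}^2\bks F^{\times}$ of the pointwise product $\theta((t,1),u,\phi_{1};\chi)\cdot \mathscr{E}(q(t)u,\phi_{2}^{p\infty};\chi_{F})$, the $a$-th $q$-expansion coefficient of $\calI$ expands as a convolution
\[
\sum_{a_{1}+a_{2}=a}\theta_{a_{1}}((t,1),u,\phi_{1};\chi)\cdot \mathscr{E}_{a_{2}}(q(t)u,\phi_{2}^{p\infty};\chi_{F}),
\]
followed by integration/summation in $t$ and $u$.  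The operator $\Up_{p,*}^{r}$ acts on such $q$-expansions by shifting the Fourier index at the primes above $p$; concretely, $\Up_{p,*}^r\calI$ vanishes iff $\calI_{a}=0$ for every $a$ with $v(a)\geq r$ at every $v\mid p$.

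For such an $a$, the two local vanishing inputs combine as follows.  By Lemma~\ref{l th}, together with the explicit form \eqref{phip1} of the Schwartz function $\phi_{1,v}$ at $v\mid p$, the theta coefficient $\theta_{a_{1}}((t,1),u,\phi_{1};\chi)$ vanishes unless both $a_{1}\in \bigcap_{v\mid p}(1+\vpi^{r}\OO_{F,v})$ and $u\in \bigcap_{v\mid p}(1+\vpi^{r}\OO_{F,v})$.  Given these constraints and $v(a)\geq r$ at every $v\mid p$, we have $a_{2}=a-a_{1}\in \bigcap_{v\mid p}(-1+\vpi^{r}\OO_{F,v})$, so that the hypotheses of Proposition~\ref{6.1-2-3}(2) are met for the pair $(a_{2},u)\in F^{\times}\times F^{\times}$.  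The proposition furnishes a finite place $v_{0}\nmid p$ of $F$ at which $a_{2}$ is not represented by $({\bf V}_{2},uq)$, and then Proposition~\ref{6.1-2-3}(1)(a) gives $W^{\circ}_{a_{2},v_{0}}=0$.  Since the $a_{2}$-th global Fourier coefficient of $\mathscr{E}$ factorises as a product of local Whittaker functions over the finite places, this forces $\mathscr{E}_{a_{2}}(q(t)u,\phi_{2}^{p\infty};\chi_{F})=0$.  Every term in the convolution is therefore zero, yielding the desired vanishing.

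\textbf{Main subtlety.}  The bit requiring care is the bookkeeping of the ``twisted'' $q$-expansion: both $\theta$ and $\mathscr{E}$ depend on the auxiliary variable $u\in \A^{\times}$ and on $t\in T(\A)$, with $u$ entering the Eisenstein factor through the combination $q(t)u$.  One must verify that the local constraints on $u$ at $v\mid p$ imposed by the support of $\phi_{1,v}$ match precisely those required to invoke Proposition~\ref{6.1-2-3}(2) — a compatibility which is built in by the parallel design of \eqref{phip1} and \eqref{phip}, but which should be made explicit.  Once that accounting is in hand, the corollary is an immediate combination of Lemma~\ref{l th} with parts (1)(a) and (2) of Proposition~\ref{6.1-2-3}.
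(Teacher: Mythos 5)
Your proof follows essentially the same route as the paper's: expand $\calI$ via \eqref{ItE} as a convolution of the $q$-expansions of $\theta$ and $\mathscr{E}$, restrict the theta index $a_1$ to $\bigcap_{v\mid p}(1+\vpi^r\OO_{F,v})$ via Lemma~\ref{l th}, deduce $a_2\in\bigcap_{v\mid p}(-1+\vpi^r\OO_{F,v})$, and kill $\mathscr{E}_{a_2}$ via parts~(1)(a) and~(2) of Proposition~\ref{6.1-2-3}. Two small points are worth noting.

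First, the paper treats the constant term $a=0$ as a separate case, invoking the vanishing of the constant term of $\mathscr{E}$ (proved as in \cite[Proposition~6.7]{yzz}); you pass over this silently. Your argument would in fact cover $a=0$ as well, since the support of $\phi_{1,r}$ in \eqref{phip1} forces the $x_1=0$ term to drop out, so the theta constant term vanishes and the remaining convolution terms are handled as for $a\neq 0$ — but the constant term of a (twisted) weight-one form is formally a different object from the nonzero Fourier coefficients and does not factor as a product of local Whittaker values, so this case deserves to be addressed explicitly rather than absorbed into the generic one.

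Second, Proposition~\ref{6.1-2-3}(1)(a) gives the vanishing $W^\circ_{a_2,v_0}(\cdot,u,\one)=0$ at the character $\chi_F=\one$; the paper accordingly writes $\cE(\one)$. You should note that for $\chi\in\Y_\omega^{\mathrm{l.c.}}$ the associated $\chi_F$ is indeed trivial, so the proposition applies. With these two points made explicit, your argument coincides with the paper's.
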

\begin{proof}
For the first assertion, we need to  show that the ${a}^{\rm th}$ reduced $q$-expansion coefficient of $\cI$ vanishes for all $a$ satisfying $v(a)\geq r$ for all $v\vert p$. By the  defining property \ref{ItE}  of $\cI(\chi')$ and the choice of $\phi_{p}$, the group $T(F_{p})\subset T(\A)$ acts trivially on the  $q$-expansion coefficients of  $\cI(\chi')$. The remaining integration on $T(F)Z(\A)V^{p}\bks T(\A)/T(F_{p}) $ is a finite sum, so  the coefficients $a$ is a sum of products of  the coefficients of index $a_{1}$ of $\theta $ and of index $ a_{2}$ of  $\cE(\one)$, for pairs $(a_{1}, a_{2}) $ with  $a_{1}+a_{2}=a$. When $a=0$, the vanishing follows from the vanishing of the constant term of $\cE$, which is proved as in \cite[Proposition 6.7]{yzz}. For $a\neq 0$, by Lemma \ref{l th} only the pairs $(a_{1}, a_{2})$ with $a_{1}\in \bigcap_{v\vert p}1+\vpi^{r}\OO_{F,v}$ contribute. If $v(a)\geq r$ for $v\vert p$, this forces $a_{2}\in  \bigcap_{v\vert p}-1+\vpi_{v}^{r}\OO_{F,v}$. Then the coefficient of index $a_{2}$ of $\cE(\one)$ vanishes by Proposition \ref{6.1-2-3}.
 \end{proof}

We can now proceed as in \cite[\S 7.2]{1}, except for the insertion of the operator 
$\Up_{p, *}^{r}$. 
(This will be innocuous for the purposes of Theorem \ref{ker id}, since the  kernel of $\Up_{p, *}^{r}$  is contained  in the kernel of $\lf$.)
We obtain, under the assumptions of \cite[\S 6.1]{1}, a decomposition  of \eqref{eqderan}  
\beq\lb{dec an}
\Up_{p, *}^{r}\cI'(\phi^{p\infty};\chi)= \sum_{v\in S_{\text{nonsplit}}- S_{p}} \Up_{p, *}^{r}\cI'(\phi^{p\infty};\chi)(v)
\eeq
valid in the space ${\bf S}$ of $p$-adic $q$-expansions with coefficients in $\Gamma_{F}\hat{\ot} L(\chi)$. See \emph{loc. cit.} for the definition of $\cI'(\phi^{p\infty};\chi)(v)$.

\subsection{Decomposition of the geometric kernel   and comparison}\lb{35} 
 Suppose that Assumptions \ref{assp1}, \ref{assp2} as well as the assumptions of \cite[\S 6.1]{1} are satisfied.   Then we may decompose  (see \cite[\S 8.2]{1}) the generating series  \eqref{the series} as
\beq\lb{eqdec} 
{\wtil Z}(\phi^{\infty},\chi) =\sum_{v}{\wtil Z}(\phi^{\infty},\chi)(v), \eeq
according to the decomposition $\la \ , \ \ra_{X}=\sum_{v} \la \ , \ \ra_{X, v}$ of the height pairing. Here the sum runs  over all finite places of $F$.  

The following is the main result of \cite{1} on the local comparison away from $p$.  Let $\baar{\bf S}{}'= \baar{\bf S}{}'_{S_{1}}$ be the quotient of the space of  $p$-adic $q$-expansions recalled above \eqref{bS'}. 

\begin{theo}[{\cite[Theorem 8.3.2]{1}}]\label{theo local comp} Let $\phi^{\infty}=\phi^{p\infty}\phi_{p}$ with $\phi_{p}=\prod_{v\vert p} \phi_{v}$ as in \eqref{phip}.  Suppose that Assumptions \ref{assp1}, \ref{assp2} as well as the assumptions of \cite[\S 6.1]{1} are satisfied. 
Then  we have the following identities of reduced $q$-expansions in $\baar{\bf S}{}'$:
\begin{enumerate}
\item\label{part split b} If $v\in S_{\rm split} - S_{p}$, then 
$$\tZ(\phi^{\infty}, \chi)(v)=0.$$
\item\label{part ns b} If $v\in S_{\textup{nonsplit}}- S_{1}-S_{p}$, then 
$$\Up_{p, *}^{r}\calI'(\phi^{p\infty}; \chi)(v)=2 |D_{F}| L_{(p)}(1,\eta) \Up_{p, *}^{r}\tZ(\phi^{\infty}, \chi)(v).$$
\item\label{part s1 b} If $v\in S_{1}$, then
\begin{align*}
\Up_{p, *}^{r} \calI'(\phi^{p\infty}; \chi)(v), \qquad \Up_{p, *}^{r}  \tZ(\phi^{\infty}, \chi)(v)
\end{align*}
are theta series attached to a quaternion algebra  over $F$.
\item\label{part sp} The sum 
$$\Up_{p, *}^{r}\tZ(\phi^{\infty}, \chi)(p):=\sum_{v\in S_{p}} \Up_{p, *}^{r} \tZ(\phi^{\infty}, \chi)(v)$$
belongs to the isomorphic image $\baar{\bf S}\subset \baar{\bf S}{}'$ of the space of $p$-adic modular forms ${\bf S}$.
\end{enumerate}
\end{theo}

By this theorem and the decompositions of $\cI'$ and $\wtil{Z}$, the proof of the kernel identity of Theorem \ref{ker id} (hence of the main theorem) is now reduced to showing  the following proposition. (See \cite[\S 8.3, last paragraph]{1} for the details of the deduction.)
\begin{prop}\lb{kill lp}
 Retain the assumptions of Theorem \ref{theo local comp}, and further assume that $V_{\frakp}A $ is potentially crystalline at all $v\vert p$.  Then the $p$-adic modular form
 $$\Up_{p, *}^{r}\tZ(\phi^{\infty}, \chi)(p) \in \baar{\bf S}$$ 
 is annihilated by $\lf$. 
\end{prop}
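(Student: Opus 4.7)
The plan is to show, for each place $v\vert p$, that the $v$-part $\Up_{p,*}^{r}\tZ(\phi^{\infty},\chi)(v)$ is $v$-critical in the sense of \eqref{defcrit}. Since the space of $p$-critical forms is the sum of the $v$-critical spaces for $v\vert p$ and is annihilated by $\lf$, this will imply the proposition. Fix $v\vert p$, unfold the integral over $[T]$ in the definition of $\tZ$, and read off the $m\vpi_{v}^{s}$-th reduced $q$-expansion coefficient of $\tZ(\phi^{\infty},\chi)(v)$: up to a harmless factor, this is a local height pairing of the form $\la P^{[\vphi]},\,D_{m,s}\ra_{X,v}$, where $P^{[\vphi]}$ is a fixed modification of the Hecke-projection of $\iota_{\xi}(P)$ to the $\vphi$-isotypic part, and $D_{m,s}$ is a degree-zero divisor supported on Hecke-translates of a CM point $P_{s}$ of conductor $\vpi_{v}^{s}$ defined over the $s$-th layer $H_{s}$ of the anticyclotomic $\vpi_{v}$-extension of $E$.

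Next I would construct the tower $\{P_{s}\}_{s}$ together with its norm-compatibility relations, and, invoking the projection formula for the finite cover $X_{H_{s}}\to X_{H}$, rewrite the local height as a sum over primes $w\vert v$ of $H$ of norm-images $N_{s,w}(h_{m,s,w})$ of heights $h_{m,s,w}$ computed on $X_{H_{s,w}}$. The standard bound on $L$-denominators for the $p$-adic height (as in \cite{1}) lets one work integrally, reducing matters to bounding the valuation $w(h_{m,s,w})=O(s)$ uniformly in $m$. By the key observation recalled in \S\ref{flex}, this valuation equals the arithmetic intersection multiplicity on a regular integral model $\X$ of $X_{H_{s,w}}$ of the flat extensions of the two divisors, so the problem becomes that of bounding an intersection multiplicity.

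The technical heart --- and the main obstacle --- is this intersection bound when $v$ is non-split in $E$; in the split case it vanishes by the elementary argument reviewed in \S\ref{11}. For this I would work in the ultrametric space of irreducible divisors on the completed local ring at a supersingular closed point of $\X$, following \cite{ultrametric}, which will be set up in \S\ref{sec4}. The key input is Gross's theory of quasicanonical liftings \cite{gross}, which I would use to produce a single irreducible component $V$ of the special fibre that approximates, to accuracy $O(s)$ in the ultrametric, the flat extension of every CM point of conductor at least $\vpi_{v}^{s}$ lying above $P_{s}$. Given such a $V$, the flat extension of a generic-fibre divisor like that of $P^{[\vphi]}$ either does not contain $V$ in its support --- in which case the intersection multiplicity with the flat extension of $D_{m,s}$ is $O(s)$ --- or contains $V$ with bounded multiplicity, in which case the excess contribution is a constant multiple of $p^{s}$; either way the multiplicity is $O(p^{s})$. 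Summing over $w\vert v$, applying the norm $N_{s,w}$, and observing that $H_{s,w}/H_{w}$ has degree $p^{s}$ so that norms of units land in a $p^{s}$-power subgroup, we obtain the claimed $v$-critical decay of the Fourier coefficients, completing the proof.
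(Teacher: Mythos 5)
Your proof plan follows the paper's argument closely: the reduction to $v$-criticality for each $v\mid p$, the reinterpretation of $q$-expansion coefficients as local heights indexed by CM divisors of growing pseudo-conductor, the passage via the integrality results of \cite{1} to arithmetic intersection multiplicities on a regular model, and the key new step of approximating Zariski closures of high-conductor CM points by a vertical component $V$ using the ultrametric of \cite{ultrametric} together with Gross's theory of quasicanonical liftings. Two small imprecisions worth flagging: in Proposition~\ref{approx} the bounded contribution $\rho(V\bullet D)$ is $O(1)$ rather than $O(s)$, and the $O(q_{F,v}^{s})$ error actually arises from the multiple $c\X_{\kappa}$ of the full special fibre (which is why flatness of $\widehat{D}$ kills the $(V\bullet\widehat{D})$ term rather than the $q_{F,v}^{s}$ term); and the boundedness of the torsion correction $d_{1,s}$ --- which is precisely where Assumption~\ref{assp1} enters, via Lemma~\ref{d1s} --- is a genuine step rather than a ``standard $L$-denominator bound''.
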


Let $S$ be a finite set of non-archimedean places of $F$ such that, for all $v\notin S$, all the data are unramified, $U_{v}$ is maximal, and $\phi_{v}$ is standard. Let $K=K^{p}K_{p}$
 be the level of the modular form $\tZ(\phi^{\infty})$, and let 
 $$T_{\iota_{\frakp}}(\sigma^{\vee})\in \mathscr{H}^{S}({L})=\mathscr{H}^{S}({M})\otimes_{M, {\iota_{\frakp}}} {L}$$
  be any $\sigma^{\vee}$-idempotent in the Hecke algebra as in \cite[Proposition 2.4.4]{1}.
  By that result, in order to establish Proposition \ref{kill lp} it suffices to prove that \begin{gather}\label{bbb}
 \lf( \Up_{p, *}^{r}T_{\iota_{\frakp}}(\sigma^{\vee}) \tZ(\phi^{\infty}, \chi)(p))=0.\end{gather}
  As in \cite{1}, we will in fact prove the following, which    implies \eqref{bbb}.

 \begin{prop}\label{is crit}  Let $v\vert p$. Under the assumptions of Proposition  \ref{kill lp}, for all $v\vert p$, the element
  $$T_{\iota_{\frakp}}(\sigma^{\vee})\tZ(\phi^{\infty}, \chi)(v)\in \baar{\bf S}{}'$$
   is $v$-critical in the sense of \eqref{defcrit}.
 \end{prop}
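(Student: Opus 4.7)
Write $\cZ := T_{\iota_{\frakp}}(\sg^{\vee})\tZ(\phi^{\infty},\chi)(v)$; the task is to show that its reduced $q$-expansion coefficients satisfy $a_{m\vpi_{v}^{s}}=O(q_{F,v}^{s})$ uniformly in $m\in \A^{\infty\ts}$. By Proposition \ref{def Z}, each coefficient of $\tZ(\phi^{\infty},\chi)(v)$ is an integral over $[T]$ of a local height at $v$ between a divisor of the form $\tZ_{a}(\phi^{\infty})[1]^{0}$ and a CM divisor $[t^{-1}]^{0}$. Applying the Hecke idempotent $T_{\iota_{\frakp}}(\sg^{\vee})$ replaces, in the limit, the divisor $[1]^{0}$ by a canonical lift $P^{[\sg]}$ of its image in the $\sg$-isotypic component of $\mathrm{Jac}(X)$, so that
$$a_{m\vpi_{v}^{s}} \doteq \langle P^{[\sg]},\, \tZ_{m\vpi_{v}^{s}}(\phi^{\infty})[t^{-1}]^{0}\rangle_{X,v}.$$

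Following the strategy of \S\ref{11}, I would introduce a sequence of auxiliary CM points $P_{s}\in X(H_{s})$, where $H_{s}$ is the $s$-th layer of the anticyclotomic $\Z_{p}$-extension of $E$ inside $E^{\rm ab}$. Trace relations among the $P_{s}$ allow the $\vpi_{v}^{s}$-shifted coefficient to be rewritten as
$$a_{m\vpi_{v}^{s}} = \langle P^{[\sg]},\, D_{m,s}\rangle_{X,v},$$
where $D_{m,s}$ is a degree-zero divisor supported on Hecke translates of $P_{s}$, essentially on CM points of conductor $\vpi_{v}^{s}$ defined over $H_{s}$. By the projection formula for $X_{H_{s}}\to X_{H}$, this local height decomposes as $\sum_{w\vert v}N_{s,w}(h_{m,s,w})$ with $h_{m,s,w}\in H_{s,w}^{\ts}\hat{\ot} L$, and, as in \cite{1}, the $L$-denominators of the $h_{m,s,w}$ are uniformly bounded, so we may in fact take $h_{m,s,w}\in H_{s,w}^{\ts}\hat{\ot}\OO_{L}$.

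Next I would reinterpret the $w$-adic valuation of $h_{m,s,w}$ as the intersection multiplicity \eqref{int int} of the flat extensions of $P^{[\sg]}$ and $D_{m,s}$ on a regular integral model $\X$ of $X_{H_{w}}$. In the non-split case this multiplicity need not vanish, and here the new ingredient enters: working in the ultrametric space of irreducible divisors on the local ring of $\X$ introduced in \cite{ultrametric}, and exploiting Gross's theory of quasi-canonical liftings \cite{gross}, one shows that for $s$ large the Zariski closure of a conductor-$\vpi_{v}^{s}$ CM point is arbitrarily close to some irreducible component $V$ of the special fibre of $\X$. Consequently, for any divisor $\cD$ arising as the flat extension of a generic-fibre divisor, the intersection multiplicity vanishes unless $|\cD|$ contains $V$, in which case one picks up an error term bounded by a constant multiple of $p^{s}$, coming from the multiplicity of $V$ in the closure of $\overline{P^{[\sg]}}$.

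Finally, since $H_{s,w}/H_{w}$ is totally ramified of degree $p^{s}$ (this is where Assumption \ref{assp1} on the sufficient ramification of $\chi_{p}$ enters, through the resulting structure of the tower $H_{s}$), the norm map contracts $\OO_{H_{s,w}}^{\ts}$ into a subgroup of $\OO_{H_{w}}^{\ts}$ of index divisible by $p^{s}$. Combined with the intersection-multiplicity bound, this yields $N_{s,w}(h_{m,s,w})\in p^{s}\cdot\OO_{H_{w}}^{\ts}\hat{\ot}\OO_{L}$ modulo the controlled error, hence $a_{m\vpi_{v}^{s}}=O(p^{s})=O(q_{F,v}^{s})$ uniformly in $m$, which is precisely the $v$-criticality statement. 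The main obstacle in this plan is the approximation step in the ultrametric space of divisors; it is non-effective and requires careful input from the theory of quasi-canonical liftings, while a constructive version is the subject of the companion paper \cite{equi}.
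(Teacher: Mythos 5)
Your outline is essentially the paper's own proof of Proposition \ref{is crit}: the reduction via the anticyclotomic tower $H_{s}$ (realised concretely through the explicit $\Up_{v,*}$-norm relation in Lemma \ref{U on Z}, Lemma \ref{norm rel} and Proposition \ref{pseudo}), the reinterpretation of valuations of local heights as arithmetic intersection multiplicities, the ultrametric approximation of CM divisors of large conductor by a vertical component $V$ (Proposition \ref{ultra} together with Gross's quasicanonical liftings in Lemma \ref{int sp} and Proposition \ref{approx}), and the final norm-map contraction over the totally ramified tower $H_{s,\baar{w}}/H_{0,\baar{w}}$, with the uniform bound on torsion denominators (Lemma \ref{d1s}) making the reduction valid.

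One detail is slightly misattributed and would need fixing in a full write-up. You say the $O(p^{s})$ error term comes from ``the multiplicity of $V$ in the closure of $\overline{P^{[\sg]}}$''; in the paper (Proposition \ref{approx} and Corollary \ref{cococo}) the exact formula is $(\baar{z}_{s}\cd D)=c[\kappa(y):\kappa]q_{F,v}^{s}+\rho(V\bullet D)$ where one writes $D=c\X_{\kappa}+D'$ with $D'$ not containing $V$. When $D=\widehat{D}$ is a \emph{flat} extension the second term vanishes outright (by definition of flatness $(V\bullet\widehat{D})=0$ for every vertical $V$), and the $O(q_{F,v}^{s})$ contribution is the first term, driven by the intersection of the large-conductor CM point with the \emph{whole special fibre} $[\X_{y,\kappa}]$ (Lemma \ref{int sp}), multiplied by the coefficient $c$ of $\X_{\kappa}$ in the vertical part of $\widehat{D}$. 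So the intersection with a nearby prime divisor does not literally vanish; rather, Proposition \ref{ultra} replaces it by $\rho(V\cd D)$, and it is the flatness of $\widehat{D}$ that makes the $V$-contribution drop. Apart from this, and the need to check that the denominators $d_{0}, d_{1,s}, d_{2}$ in \eqref{hs} are bounded (as you acknowledge), your plan matches the actual argument.
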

 The proof will occupy the following section.

\section{Local heights at $p$}
\lb{sec4}
The goal of this section is to prove Proposition \ref{is crit}, whose assumptions we retain throughout except for an innocuous modification to the data at the places $v\vert p$. Namely, let $\wtil{\phi}_{v}$ be the Schwartz function denoted by $\phi_{r}$ in \eqref{phip}, and let $\wtil{U}_{v}\subset \B_{v}^{\ts}$ be the open compact subgroup denoted by $U_{v, r}$  in  Assumption \ref{assp2}. Then we let 
$$U_{v}:=\wtil{U}_{v}U_{F,v}^{\circ}, \qquad \phi_{v}:= \dashint_{U_{F, v}^{\circ}}  r(z, 1) \wtil{\phi}_{v}\, dz.$$
Since $\chi_{v|F_{v}^{\ts}}=\omega_{v}^{-1}$ is by construction  invariant under $U_{F, v}^{\circ}$, the geometric kernels $\tZ(\phi^{p\infty}\wtil{\phi}_{p}, \chi) $ and $ \tZ(\phi^{p\infty}\phi_{p}, \chi)$ are equal.  Therefore we may work on the curve $X_{U}$.

We fix a place $v\vert p$. If $v$ splits in $E$ then the desired result is proven in \cite[\S 9]{1} with a correction in Appendix \ref{app err}. Therefore we may  and do assume that $v$ is nonsplit.  We denote by $w$ the place of $E$ above~$v$.

\medskip

We refer the reader to \S~\ref{11} for a general sketch of our argument. It is developed here as follows. In \S~\ref{41}, we prove that after acting by a high power of $\Up_{v,*}$, the coefficients of the generating series are height pairings with  CM points of high $v$-conductor (\emph{norm relation}). After some general background in \S~\ref{42}, we use the norm relation to prove the decay property of arithmetic intersection multiplicities in \S~\ref{43}. Finally, in \S~\ref{dec lh} we use again the norm relation and some $p$-adic Hodge theory to prove the decay property of local heights.

\subsection{Norm relation for the generating series}\lb{41}
The goal of this subsection will be to show that, for $s$ large enough, each  $q$-expansion coefficient of  $\Up_{p,*}^{s}\wtil{Z}(\phi^{\infty}, \chi)$  is a height pairing of CM  divisors of which one is  supported on Galois orbits of CM points of `pseudo-conductor'  $s$ (as defined below).

We start by considering the $\Up_{v, *}$-action on  the generating series  $\wtil{Z}(\phi^{\infty})=\eqref{zphi}$. Recall that $d_{v}$ (\S~\ref{sec:not}) is a generator of the different ideal of $F_{v}$.

\begin{lemm} \lb{U on Z}
If $a\in \A^{\infty\ts}$ satisfies $v(a)\geq -v(d_{v})$, the $a^{\text th}$ reduced
 $q$-expansion coefficient of $\Up_{v,*}{\wtil Z}(\phi^{\infty})$ equals
 $${\wtil Z}_{a\vpi_{v}^{}}(\phi^{\infty}),$$
where  $ {\wtil Z}_{a'}(\phi^{\infty})$ is defined in \eqref{za}. 
 \end{lemm}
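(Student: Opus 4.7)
The strategy is a direct computation of how the Hecke operator $\mathrm{U}_{v,*}$ acts on the reduced $q$-expansion of the generating series, combined with the explicit formula \eqref{za} for the correspondence-valued coefficients $\widetilde{Z}_{a}(\phi^{\infty})$.

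First, I would unwind $\mathrm{U}_{v,*}$ as the double coset $K_{1}^{1}(\vpi_{v}^{r})\smalltwomat{1}{}{}{\vpi_{v}^{-1}}K_{1}^{1}(\vpi_{v}^{r})$ and decompose it into single right cosets. Using the Whittaker-model description of the reduced $q$-expansion developed in \cite[\S 2]{1}, the $a^{\text{th}}$ coefficient of a form $f$ at the adelic parameter $u$ is obtained by evaluating a Whittaker function at $\alpha(a)=\smalltwomat{a}{}{}{1}$, so that the $a^{\text{th}}$ coefficient of $\mathrm{U}_{v,*}f$ is a sum over coset representatives of the corresponding shifted evaluations. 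The matrix identity
$$\twomat{a}{}{}{1}\twomat{1}{}{}{\vpi_{v}^{-1}} = \vpi_{v}^{-1}\twomat{a\vpi_{v}}{}{}{1},$$
together with the weight-$2$ transformation under the center (absorbed into the normalisation of the Whittaker function with central character $\omega^{-1}$), transforms the contribution of the diagonal representative into precisely the $(a\vpi_{v})^{\text{th}}$ coefficient of $f$.

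Next, I would verify that under the hypothesis $v(a)\geq -v(d)$ no other coset representatives contribute. This is because the $v$-adic Whittaker coefficient is supported on arguments whose valuation is at least $-v(d_{v})$ (owing to our choice that $\psi_{v}$ has level $d_{v}^{-1}$), and the off-diagonal coset representatives in the decomposition of $\mathrm{U}_{v,*}$ push us outside this range in exactly the complementary regime $v(a)<-v(d)$. Hence under our hypothesis the formula reduces cleanly to the single shift $a\mapsto a\vpi_{v}$.

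Applying this to $f=\widetilde{Z}(\phi^{\infty})$, whose $a^{\text{th}}$ reduced $q$-expansion coefficient is by definition the Hecke correspondence $\widetilde{Z}_{a}(\phi^{\infty})$ of \eqref{za}, yields the claim. The main obstacle is purely bookkeeping: one must carefully track the various normalisations (volume of $K_{1}^{1}(\vpi_{v}^{r})$, the central-character factor $\omega_{v}^{-1}(\vpi_{v}^{-1})$ arising from $\vpi_{v}^{-1}$ being pulled to the center, and the convention for the reduced $q$-expansion in \cite[\S 2]{1}) so that no stray scalar multiple appears. Once these are confirmed to cancel, the lemma is essentially the standard statement that on $q$-expansions $\mathrm{U}_{v,*}$ implements the index shift $a\mapsto a\vpi_{v}$ within the range where the expansion is supported.
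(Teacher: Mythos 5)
The paper's proof is a one-line computation: compute the Weil-representation action $r(\Up_{v,*})\phi_{v}$ on the local Schwartz function, substitute into the explicit formula \eqref{za} for $\wtil Z_{a}$, and observe that a change of the $u$-variable identifies it with $\wtil Z_{a\vpi_{v}}(\phi^{\infty})$. Your approach instead works at the level of Whittaker (reduced-$q$) coefficients of the abstract automorphic form, decomposing the double coset and tracking the matrix factorisation $\smalltwomat{a}{}{}{1}\smalltwomat{1}{}{}{\vpi_{v}^{-1}}=\vpi_{v}^{-1}\smalltwomat{a\vpi_{v}}{}{}{1}$. These are genuinely different computations: the paper's argument never leaves the theta-series presentation of $\wtil Z(\phi^{\infty})$ and is essentially purely local on the Weil-representation side, while yours would apply to any $K_{1}^{1}(\vpi_{v}^{r})$-invariant form with the right support.

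That said, there is a concrete gap in your verification step. The claim that \emph{``under the hypothesis $v(a)\geq -v(d)$ no other coset representatives contribute''} is not correct, and the mechanism you invoke does not occur. The $q_{F,v}$ right coset representatives of $K_{1}^{1}(\vpi_{v}^{r})\smalltwomat{1}{}{}{\vpi_{v}^{-1}}K_{1}^{1}(\vpi_{v}^{r})$ may be taken of the form $n^{-}(c\vpi_{v}^{r})\smalltwomat{1}{}{}{\vpi_{v}^{-1}}$ with $c$ running over $\OO_{F,v}/\vpi_{v}$; one checks that $\smalltwomat{a}{}{}{1}n^{-}(c\vpi_{v}^{r})\smalltwomat{1}{}{}{\vpi_{v}^{-1}}$ lies in $\smalltwomat{a}{}{}{\vpi_{v}^{-1}}K_{1}^{1}(\vpi_{v}^{r})$ for \emph{every} $c$, so every representative (not just a single ``diagonal'' one) lands at the same evaluation point and contributes equally. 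The $a^{\text{th}}$ coefficient of $\Up_{v,*}\wtil Z$ thus picks up all $q_{F,v}$ contributions, and the clean equality of the lemma is recovered only after this multiplicity of $q_{F,v}$ cancels against the normalisation factor $|a\vpi_{v}|/|a|=q_{F,v}^{-1}$ present in \eqref{za} (together with the central-character factor). In other words, there is no ``selection'' of one coset; there is a counting factor that must be, and is, cancelled by the $|a|$-normalisation. The hypothesis $v(a)\geq -v(d)$ governs the support of the $q$-expansion and not the vanishing of individual coset terms. Your proof as written would therefore produce the wrong constant; you should replace the ``supports vanish'' reasoning with an explicit bookkeeping of all $q_{F,v}$ terms and the normalisation $|a|$ in \eqref{za}, or else follow the paper and push the operator through the Weil representation onto $\phi_{v}$ directly.
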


\begin{proof} After computing the Weil action of $\Up_{v, *}$ on $\phi$, this is a simple change of variables.
\end{proof}

We can factor 
$$ {\wtil Z}_{a}(\phi^{\infty})  =  {\wtil Z}_{a^{v}}(\phi^{v\infty}) Z_{a_{v}}(\phi_{v}), $$
as the composition of the  commuting correspondences  
\begin{align} \nonumber
\tZ_{a}^{v}(\phi^{v\infty}) &:=c_{U^{p}}\sum_{x^{v}\in U^{v}\bks \B^{v\infty\times}/U^{v}}\phi^{v\infty}(x^{v}, aq(x^{v})^{-1})Z(x^{v})_{{U}}, \\
\lb{Zloc} Z_{a_{v}}(\phi_{v}) &:= \sum_{x_{v}\in U_{v}\bks \B_{v}^{\ts}/U_{v} } \phi_{v}
(x_{v}, aq(x_{v})^{-1})Z(x_{v})_{{U}}. 
\end{align}

From here until after the proof of Lemma \ref{Xireps}, we work in a local situation and drop $v$ from the notation.
Let $\theta\in\OO_{E}$ be such that $\OO_{E}= \OO_{F}+\theta\OO_{F}$, and 
write $\rT={\Tr_{E/F}}(\theta_{})$, $\rN_{} =\Nm_{E_{}/F_{}}(\theta_{})$.
Fix the embedding $E_{}\to \B_{}=M_{2}(F_{})$ to be 
\beqq
 t=a+\theta b\mapsto \twomat {a+b\rT} {b{\rN}} {-b} a.
\eeqq
Let \beq\lb{jv}\rj:= \twomat 1 \rT {}{-1}.\eeq 
Then $\rj^{2}=1$, $q({\rj})=-1$,  and for all $t\in E$, $\rj t =t^{c}\rj$; and in the orthogonal decomposition ${\bf V}={\bf V}_{1}\oplus {\bf V}_{2}$, we have 
$$\OO_{{\bf V}_{2}}=  {\bf V}_{2}\cap M_{2}(\OO_{F})=\rj \OO_{E}.$$

Let $\Xi(\vpi^{r}) =1+\vpi^{r}\OO_{E}+ \rj (\OO_{E}\cap q^{-1}(1+\vpi^{r}\OO_{F}))$; then   $\phi$ 
is a fixed multiple of the characteristic function of $\Xi(\vpi^{r}) \ts (1+\vpi^{r}\OO_{F})\subset \B \ts F^{\ts}$. For $a\in F^{\ts}$, let $$\Xi(\vpi^{r})_{a} =\{x\in \Xi(\vpi^{r}) \  |\ q(x)\in a(1 +\vpi^{r}\OO_{F})\}.$$ Then 
  the local component \eqref{Zloc} of the  generating series equals
$$ Z_{a}(\phi)  =
\sum_{x \in U\bks \Xi(\vpi^{r})_{a} U_{F}^{\circ}/U }
Z(x)_{{U}},  
$$
up to a constant that is  independent of $a$.

\begin{lemm} \lb{Xireps}
Let $a\in F^{\ts}$ satisfy $v(a)=s\geq r$.  The natural map $\Xi(\vpi^{r})_{a} U_{F}^{\circ}/U \to U\bks \Xi(\vpi^{r})_{a} U_{F}^{\circ} /U$ is a bijection.   For either quotient set, a complete set of representatives is given by the elements 
$$x(b):= 1+\rj b$$
as $b$ ranges through a complete set of representatives for 
$$q^{-1}( 1-a(1+\vpi^{r}\OO_{F})) /(1+\vpi^{r+s}\OO_{E})     \subset (\OO_{E}/\vpi^{r+s}\OO_{E})^{\ts}.$$
\end{lemm}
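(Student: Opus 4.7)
The plan is a direct computation in the local quaternion algebra $\B_v$, carried out in four moves.

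First, I would reduce every $x \in \Xi(\vpi^{r})_{a}$ to the canonical form $1 + \rj b$. By the definition of $\Xi(\vpi^{r})$ and the orthogonality of $E$ and $\rj E$ in $\B_v$, any such $x$ decomposes uniquely as $x = e + \rj b'$ with $e \in 1+\vpi^{r}\OO_{E}$ and $b' \in \OO_{E}$, $q(b') \in 1+\vpi^{r}\OO_{F}$. Since $e^{-1} \in 1+\vpi^{r}\OO_{E} \subseteq \wtil{U} \subseteq U$, right-multiplying by $e^{-1}$ gives $xe^{-1} = 1 + \rj(b'e^{-1})$, so every right $U$-coset in $\Xi(\vpi^{r})_{a} U_{F}^{\circ}$ has a representative $x(b) = 1+\rj b$ with $b := b'e^{-1} \in \OO_{E}$. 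From $q(x(b)) = 1 - q(b)$, the condition $x(b) \in \Xi(\vpi^{r})_{a}$ translates to $q(b) \in 1 - a(1+\vpi^{r}\OO_{F})$; since $v(a) = s \geq 1$ this forces $q(b) \in \OO_{F}^{\ts}$ and hence $b \in \OO_{E}^{\ts}$, placing $b$ inside $(\OO_{E}/\vpi^{r+s}\OO_{E})^{\ts}$.

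Next, I would characterise when two such representatives $x(b_{1})$ and $x(b_{2})$ lie in the same right $U$-coset. Using $(1+\rj b)^{-1} = (1-q(b))^{-1}(1-\rj b)$ together with the identities $\rj t = t^{c}\rj$ and $\rj b\rj b' = b^{c} b'$ for $t,b,b' \in E$, a direct expansion yields
\begin{equation*}
x(b_{1})^{-1}x(b_{2}) = 1 + (1-q(b_{1}))^{-1}\bigl(-b_{1}^{c}(b_{2}-b_{1}) + (b_{2}-b_{1})^{c}\,\rj\bigr).
\end{equation*}
Via the decomposition $M_{2}(\OO_{F}) = \OO_{E} \oplus \rj\OO_{E}$ (valid outside residue characteristic~$2$), the group $\wtil{U} = 1 + \vpi^{r} M_{2}(\OO_{F})$ corresponds to $\{y_{1} + \rj y_{2} : y_{1} \in 1+\vpi^{r}\OO_{E},\, y_{2} \in \vpi^{r}\OO_{E}\}$. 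The $\rj$-coefficient of $x(b_{1})^{-1}x(b_{2})-1$ is $(1-q(b_{1}))^{-1}(b_{2}-b_{1})$; using $v(1-q(b_{1}))=s$, this lies in $\vpi^{r}\OO_{E}$ precisely when $b_{2} - b_{1} \in \vpi^{r+s}\OO_{E}$, in which case the $E$-component lands in $1+\vpi^{r}\OO_{E}$ automatically. Central adjustments from the $U_{F}^{\circ}$-factor of $U$ do not affect the $\rj$-coefficient, so this yields the announced bijection of right $U$-cosets with $q^{-1}(1-a(1+\vpi^{r}\OO_{F}))/(1+\vpi^{r+s}\OO_{E})$.

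Finally, for the bijection between right- and double-coset quotients inside $\Xi(\vpi^{r})_{a} U_{F}^{\circ}$, one has to check injectivity of the natural surjection: for any $u \in U$ with $u\cdot x(b) \in \Xi(\vpi^{r})_{a} U_{F}^{\circ}$, one needs $x(b)^{-1}\,u\,x(b) \in U$. I expect this to be the main technical point of the lemma; the argument should track the action of conjugation by $x(b)$ on the depth-$r$ filtration of $U$, using both the enlargement by $U_{F}^{\circ}$ and the vanishing $v(1-q(b))=s$ to absorb any potential $\vpi^{r}$-drift of $b$ back into its prescribed $\vpi^{r+s}$-class.
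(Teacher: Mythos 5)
Your first two moves (reduction to canonical form $x(b)$, and classification of right $U$-cosets by computing $x(b_1)^{-1}x(b_2)$) are correct and are, up to repackaging, the same as the paper's: the paper computes the orbit $x(b)\gamma$ for $\gamma = 1 + \vpi^r u_1 + \rj\vpi^r u_2 \in \wtil{U}$ and reads off when it returns to the form $x(b')$, whereas you compute the quotient $x(b_1)^{-1}x(b_2)$ directly and read off the membership condition in $U$. Both give $b_2 - b_1 \in \vpi^{r+s}\OO_E$ (equivalently $b_2/b_1 \in 1+\vpi^{r+s}\OO_E$), and your observation that the $U_F^\circ$-slack only rescales the $\rj$-component and so cannot alter the valuation criterion is exactly what makes the two conditions coincide.

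Your third move, however, is where you stop: you correctly identify the right-coset-to-double-coset bijection as the delicate point and propose to track $x(b)^{-1} u x(b)$, but you do not carry it out. The paper's route is different and does not proceed via conjugation: it simply repeats the $\gamma$-orbit computation for the \emph{left} action, showing that $\gamma x(b)$ is of the form $x(b')$ exactly when $u_1 = -u_2^c b$, in which case $b' = b + \vpi^r u_2(1-q(b)) \in b(1+\vpi^{r+s}\OO_E)$; the paper concludes by asserting that the left and right $\wtil{U}$-actions therefore preserve the same invariant, so a double coset meets $\Xi$ in a single right coset. You should note that even this published argument is terse on the crucial inference: showing that left multiplication preserves the form $x(\cdot)$ only for $u_1 = -u_2^c b$ is not literally the same as showing that the right-orbit invariant is a left-invariant, since a general $\gamma x(b)$ may leave the $x(\cdot)$-slice and must be brought back by a further right multiplication. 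Your instinct that this is the ``main technical point'' is therefore sound, but the route you propose (conjugating the depth-$r$ filtration of $U$ by $x(b)$) is precisely the place where one must confront elements $t \in 1+\vpi^r\OO_E \subset \wtil{U}$, for which $t\,x(b)\,t^{-1} = x(b\,t^c/t)$ with $t^c/t$ a priori landing only in $1+\vpi^{r+v(\theta-\theta^c)}\OO_E$, not $1+\vpi^{r+s}\OO_E$. A complete treatment has to explain why such conjugations do not produce new right cosets inside $\Xi_a U_F^\circ$ — compare with the quotient by $(1+\vpi^{r+v(\theta-\theta^c)}\OO_E \cap q^{-1}(1))$ appearing immediately afterwards in Lemma~\ref{norm rel}.2, which is exactly the group of such $t^c/t$. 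Before trying to push your conjugation argument through, work out this interaction explicitly; as written, the gap in your proposal sits exactly on the phenomenon that Lemma~\ref{norm rel} is designed to account for.
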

\begin{proof}  It is equivalent to prove the same statement for the quotients of  $\Xi(\vpi^{r})_{a}$ by the group $\wtil{U}=1+\vpi^{r} M_{2}(\OO_{F})$. 
By acting on the right with elements of $\OO_{E}\cap U$, we can bring any element of $\Xi$ to one of the form $x(b)$. Write any $ \gamma \in \wtil{U}$ as $\gamma =1+\vpi^{r}u_{1} +{\rm j} \vpi^{r}u_{2}$ with $u_{1}$, $u_{2}\in \OO_{E}$. Then 
$$x(b)\gamma = (1+\rj b)(1+\vpi^{r}u_{1} +\rj \vpi^{ r} u_{2}) =  1+ \vpi^{r}(u_{1}+b^{c}u_{2}) + \rj (b+ \vpi^{r}(bu_{1}+u_{2})$$
 is another element of the form $x(b')$ if and only if  $u_{1}=-b^{c}u_{2}$. In this case
 $$b'= b+\vpi^{r}(1-q(b))u_{2} 
 \in b(1+\vpi^{r+s}\OO_{E}).$$
Thus the class of $b$ modulo $\vpi^{r+s}$ is the only  invariant of the quotient $\Xi/\wtil{U}$.  The $\wtil{U}$-action on the left similarly preserves this invariant.
\end{proof}

We go back to a global setting and notation, restoring the subscripts $v$ and $w$.
Denote by ${\rm rec}_{E_{w}}\colon E_{w}^{\ts}\to \calG_{E,w}^{\rm ab}$ the reciprocity map of class field theory. Recall that for $x\in \B^{\infty\ts}$, we have a point $[x]_{U}\in X_{U}$; for a subset $\Xi'\subset \B^{\infty\ts}$, we similarly denote $[\Xi']_{U}=\{[x]_{U}\, |\, x\in \Xi'\}$. 

\begin{lemm} \lb{norm rel} Fix $a\in \OO_{F,v}$ with $v(a)=s\geq r$.
\begin{enumerate}
\item Let $b\in ( \OO_{E,w}/\vpi_{v}^{r+s}\OO_{E,w})^{\ts}$, $t\in 1+\vpi_{v}^{r}\OO_{E,w}$. Then 
$${\rm rec}_{E_{w}}(t)[x(b)]_{U}= [x(b t^{c}/t)]_{U}.$$
\item We have 
$$[\Xi(\vpi_{v}^{r})_{a}U_{F, v}^{\circ}]_{U}= \bigsqcup_{\baar{b}} \ {\rm rec}_{E_{w}}((1+\vpi_{v}^{r}\OO_{E,w})/(1+\vpi_{v}^{r}\OO_{F,v})(1+\vpi_{v}^{r+s}\OO_{E,w})) [x(\baar{b})]_{U},$$
where the Galois action  is faithful, and  $\baar{b}$ ranges through a set of representatives for
\beq \lb{theset}
q_{v}^{-1}( 1-a(1+\vpi_{v}^{r}\OO_{F,v})) /(1+\vpi_{v}^{r+s}\OO_{E,w})\cdot (1+\vpi_{v}^{r+v(\tht-\tht^{c}) }\OO_{E,w}\cap q_{v}^{-1}(1)).\eeq		
The size of the  set \eqref{theset} is bounded uniformly in $a$.
\end{enumerate}
\end{lemm}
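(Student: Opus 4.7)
The plan is to reduce both parts to the single commutation relation $\rj s = s^{c}\rj$ for $s \in E$ (equivalently $s\rj = \rj s^{c}$), which is immediate from the definition of $\rj$.

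For Part (1), recall the Shimura reciprocity law identifies ${\rm rec}_{E}(t)$ with right multiplication by $t^{-1}$ on the CM fiber of $X_{U}$ through $P$ (the paper's convention). Writing $x(b)\,t^{-1} = t^{-1} + \rj b\,t^{-1}$ and applying the commutation relation twice to move $t^{-1}$ through $\rj$, one computes directly
$$x(b)\cdot t^{-1} = t^{-1}\cdot x(b\, t^{c}/t).$$
The prefactor $t^{-1}\in E^{\ts}$ is absorbed into the left quotient by $E^{\ts}$, giving $[x(b)]_{U}^{{\rm rec}(t)} = [x(b t^{c}/t)]_{U}$ as desired.

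For Part (2), I start from the parametrization of Lemma \ref{Xireps} by $b \in q^{-1}(1-a(1+\vpi^{r}\OO_{F}))/(1+\vpi^{r+s}\OO_{E})$ and then use Part (1) to identify the Galois action as $b \mapsto b\cdot t^{c}/t$. I then need three facts about the map $t\mapsto t^{c}/t$ on $1+\vpi^{r}\OO_{E}$, each an easy calculation by writing $t = 1+\vpi^{r}(u+\tht w)$ with $u,w\in\OO_{F}$: its kernel is $1+\vpi^{r}\OO_{F}$ (which also already acts trivially on $X_{U}$, since we arranged $U_{F,v}^{\circ}\subset U$); its image lies in $1+\vpi^{r+v(\tht-\tht^{c})}\OO_{E}\cap q^{-1}(1)$; and modulo $(1+\vpi^{r+s}\OO_{E})$ the image equals that subgroup, by a Hensel-type lifting argument which is uniform for $E_{v}/F_{v}$ unramified or ramified. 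Combined, these produce both the orbit decomposition and the freeness (``faithfulness'') of the action, with orbit index $\bar b$ ranging exactly over \eqref{theset}. For the uniform bound on $|\eqref{theset}|$: the coset $1-a(1+\vpi^{r}\OO_{F})$ lies in $\OO_{F,v}^{\ts}$ since $v(a)\geq r\geq 1$, so local surjectivity of $q=N_{E_{v}/F_{v}}$ on units (with fibers of bounded size controlled by $|E_{v}^{1}\cap\OO_{E,v}^{\ts}|$ and the ramification index) gives a bound on the numerator, while the orbit size that is divided out grows at least linearly in $s$, yielding the $s$-uniform bound.

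The main obstacle is the third fact in the list above: pinning down the image of $t\mapsto t^{c}/t$ modulo $\vpi^{r+s}$ precisely, uniformly across the unramified and ramified cases for $E_{v}/F_{v}$, is the only step that is not a one-line manipulation and requires careful bookkeeping of valuations involving $v(\tht-\tht^{c})$.
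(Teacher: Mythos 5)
Your Part (1) matches the paper's computation up to a harmless difference of convention: the paper writes $\mathrm{rec}_E(t)[x(b)]_U = [t(1+\rj b)]_U = [t+\rj t^c b]_U = [1+\rj b\,t^c/t]_U$, i.e.\ left multiplication by $t$, whereas you use right multiplication by $t^{-1}$; both land on $[x(bt^c/t)]_U$. For the orbit decomposition in Part (2) you also follow the paper's route: identify the kernel and image of $t\mapsto t^c/t$ on $1+\vpi^r\OO_E$. You correctly find the kernel $1+\vpi^r\OO_F$ and the target $1+\vpi^{r+v(\tht-\tht^c)}\OO_E\cap q^{-1}(1)$, and you are right to flag the surjectivity onto this subgroup (modulo $1+\vpi^{r+s}\OO_E$) as the only nontrivial valuation computation; the paper simply asserts it.

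The uniform bound on the size of \eqref{theset}, however, is not correct as you have written it, and the error is logical rather than cosmetic. You assert that local (near-)surjectivity of $q=N_{E_v/F_v}$ on units ``gives a bound on the numerator'' with ``fibers of bounded size controlled by $|E_v^1\cap\OO_{E,v}^\ts|$.'' But $E_v^1\cap\OO_{E,v}^\ts$ is a compact \emph{infinite} group; the fibers of $q$ are its cosets, and the numerator $q^{-1}\bigl(1-a(1+\vpi^r\OO_F)\bigr)$ taken modulo $1+\vpi^{r+s}\OO_E$ has cardinality growing on the order of $q_{F,v}^s$ --- not bounded in $s$. Your final clause compounds this: a bounded numerator divided by an orbit whose size grows at least linearly in $s$ would make \eqref{theset} eventually empty, which is not what the lemma claims. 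In reality the numerator and the orbit size both grow at comparable rates and cancel; your argument never articulates that cancellation and in fact asserts the opposite for the numerator. The paper's argument is cleaner and you should adopt it: the map $(\mathrm{projection},\,q)$ injects \eqref{theset} into
$$
(\OO_E/\vpi^{r+v(\tht-\tht^c)}\OO_E)^\ts \ \times\ \bigl(1-a(1+\vpi^r\OO_F)\bigr)\big/ q(1+\vpi^{r+s}\OO_E),
$$
the first factor is visibly bounded independently of $a$ and $s$, and the second factor is identified with the fixed finite group $\OO_F/\mathrm{Tr}(\OO_E)$ via $1-a(1+\vpi^r x)\mapsto x$ (using $v(a)=s$ so that the coset equals $(1-a)(1+\vpi^{r+s}\OO_F)$ and $q(1+\vpi^{r+s}\OO_E)\equiv 1+\vpi^{r+s}\mathrm{Tr}(\OO_E)$ modulo higher order).
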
 
\begin{proof}
For part 1, we have 
$${\rm rec}_{E_{w}}(t)[x(b)]_{U}= [t+t\rj b]_{U}= [t+\rj t^{c} b]_{U} = [1+\rj b t^{c}/t]_{U}.$$
Part 2 follows  Lemma \ref{Xireps} and  part 1, noting that the group $(1+\vpi_{v}^{r+v(\tht-\tht^{c}) }\OO_{E,{w}})\cap q_{v}^{-1}(1)$ is the image  of the map $t\mapsto t^{c}/t$ on $1+\vpi_{v}^{r}\OO_{E,w}$.  Finally, the map $({\rm projection}, q_{v})$ gives an injection from 
$$q_{v}^{-1}( 1-a(1+\vpi_{v}^{r}\OO_{F,v})) /(1+\vpi_{v}^{r+s}\OO_{E,w})\cdot (1+\vpi_{v}^{r+v(\tht-\tht^{c}) }\OO_{E,w}\cap q_{v}^{-1}(1)) $$
to
 $$
(\OO_{E,w}/\vpi_{v}^{r+v(\tht -\tht^{c})}\OO_{E,w})^{\ts} \ts (1-a(1+\vpi_{v}^{r}\OO_{F,v}))/q(1+\vpi_{v}^{r+s}\OO_{E,w}), $$
whose size is bounded uniformly in $a$ (more precisely, the second factor is isomorphic to $\OO_{F,v}/{\rm Tr}(\OO_{E,v})$ via the map $1-a(1+\vpi_{v}^{r}x)\mapsto x$).
\end{proof}

We denote by $\baar{w}$ an extension of the place $w$ to $E^{\rm ab}$. For $s\geq 0$, let 
$$H_{s}\subset E^{\rm ab}$$
 be the finite abelian extension of $E$ with norm group $U_{F}^{\circ} U_{T}^{v}(1+ \vpi_{v}^{r_{v}+s}\OO_{E,v})$, where $U_{T}^{v}= U^{v} \cap E_{\A^{\infty}}^{\ts}$. Let $H_{\infty}= \bigcup_{s\geq 0} H_{s}$. If $r_{v}$ is sufficiently large, for  all $s\geq 0$ the extension $H_{s}/H_{0}$ is totally ramified at $\baar{w}$ of degree $q_{F, v}^{s}$, and 
$$\Gal(H_{s}/H_{0})\cong \Gal(H_{s, \baar{w}}/H_{0, \baar{w}}) \cong (1+{\vpi}_{v}^{r_{v}}\OO_{E,v} )/(1+\vpi_{v}^{r_{v}}\OO_{F,v}) (1+\vpi_{v}^{r_{v}+s}\OO_{E, v}).$$
In particular,
$$[H_{s}:H_{0}]= [H_{s, \baar{w}} : H_{0, \baar{w}}] = q_{F, v}^{s}.$$

We will say that a CM point $z\in X_{H_{0}}$ has \emph{pseudo-conductor} $s\geq 0$ (at $\overline{w}$) if $H_{0, \baar{w}}(z)=H_{s, \baar{w}}$.

\begin{prop}\lb{pseudo} There exists an integer $d>0$ such that for all  $a\in \A^{\infty,x}$ with $v(a)=s\geq r_{v}$, there exists a degree-zero divisor $D_{a}\in d^{-1} \Div^{0}(X_{U , H_{s}})$, supported on CM points, such that 
$$\tZ_{a}(\phi^{\infty})[1]_{U} = {\rm Tr}_{H_{s}/H_{0}} (D_{a})$$ 
in ${\rm Div}^{0}(X_{U, H_{0}}).$
All prime divisor  components of $D_{a}$ are CM points of pseudo-conductor $s$.
\end{prop}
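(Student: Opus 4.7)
The plan is to decompose the Hecke correspondence as $\tZ_a(\phi^\infty) = \tZ_{a^v}(\phi^{v\infty})\cdot Z_{a_v}(\phi_v)$ and analyze the two factors separately, exploiting that only the $v$-factor produces ramification growing with $s$.

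First I would unpack $Z_{a_v}(\phi_v)[1]_U$ via the explicit coset enumeration. By Lemma \ref{Xireps}, this divisor equals, up to a constant $c_v$ independent of $a$, the sum $\sum_{\bar b}[x(\bar b)]_U$ where $\bar b$ ranges over the set in \eqref{theset}. By Lemma \ref{norm rel}(2), the group $\Gal(H_s/H_0) \cong (1+\vpi_v^{r_v}\OO_{E,v})/(1+\vpi_v^{r_v}\OO_{F,v})(1+\vpi_v^{r_v+s}\OO_{E,v})$ acts faithfully on $\Xi(\vpi_v^{r_v})_a U_{F,v}^\circ/U_v$ with the orbits indexed by $\bar b$. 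Since the disjoint-union decomposition forces the product of the orbit size and the number of orbits to equal the cardinality of $\Xi(\vpi_v^{r_v})_a U_{F,v}^\circ/U_v$, and since the common orbit size is at most $[H_s:H_0]=q_{F,v}^s$, the faithfulness implies that the action on each $[x(\bar b)]_U$ is in fact free. This yields two consequences: (i) the residue field of $[x(\bar b)]_U$ is exactly $H_s$, so $[x(\bar b)]_U$ is of pseudo-conductor $s$; and (ii) each Galois orbit coincides with $\Tr_{H_s/H_0}([x(\bar b)]_U)$ when the latter is regarded as a divisor on $X_{U,H_s}$.

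Next I would invoke the $F$-rationality of $\tZ_{a^v}(\phi^{v\infty})$, which implies commutation with $\Gal(\bar F/H_0)\supset \Gal(H_s/H_0)$, to slide the trace outside:
$$\tZ_a(\phi^\infty)[1]_U = c_v \cdot \Tr_{H_s/H_0}\Bigl(\sum_{\bar b}\tZ_{a^v}(\phi^{v\infty})[x(\bar b)]_U\Bigr).$$
Defining $D_a$ as the inner sum modified by the usual degree-zero correction (subtracting the appropriate $\Q$-multiple of $\xi_U$ on each connected component of $X_{U,H_s}$), and absorbing $c_v$ into the denominator $d$, gives the desired identity. All prime components are Hecke translates (by elements away from $v$) of the CM points $[x(\bar b)]_U$, hence themselves CM points of $X_{U,H_s}$.

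The only real obstacle is bookkeeping: identifying the constant $c_v$ in terms of the Haar-measure normalizations and the index $[U_v:\wtil U_v]$, and bounding the denominator of the degree-zero modification uniformly in $a$. For the latter, the uniform bound on the cardinality of \eqref{theset} supplied by the final sentence of Lemma \ref{norm rel} suffices, since the degree of the inner sum is controlled by the number of summands times the (fixed) degree of the correspondence $\tZ_{a^v}(\phi^{v\infty})$.
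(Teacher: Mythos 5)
Your outline follows the paper's route: decompose the correspondence at $v$, enumerate coset representatives $x(\bar b)$ via Lemma \ref{Xireps}, repackage as a trace $\Tr_{H_s/H_0}$ via Lemma \ref{norm rel}(2), and control the denominator $d$ by the uniform bound on the cardinality of \eqref{theset}. That part matches the paper's (very terse) argument.

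The genuine gap is in the degree-zero step. You propose to produce $D_a\in\Div^0$ by subtracting $\Q$-multiples of $\xi_U$ from the inner sum; but once $\tZ_a(\phi^\infty)[1]_U = c_v\,\Tr_{H_s/H_0}(\text{inner sum})$ is established, replacing the inner sum by a $\xi$-corrected divisor changes the right-hand side while the left-hand side is fixed (the trace of a $\xi$-correction does not vanish), so the asserted identity $\tZ_a(\phi^\infty)[1]_U = \Tr_{H_s/H_0}(D_a)$ would fail unless the correction is already zero. What the statement actually requires, and what the paper invokes, is that $\tZ_a(\phi^\infty)[1]_U$ is \emph{already} of degree zero on every connected component, under the standing assumptions on $\phi^{v\infty}$ from \cite[\S 6.1]{1} --- this is precisely \cite[Proposition 8.1.1]{1}. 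Then the inner sum is automatically degree zero and no $\xi$-modification is needed (or permitted). A lighter caveat: the passage from ``faithful'' to ``free'' is not valid for a general action, and your appeal to ``common orbit size'' assumes what must be checked. It does hold here because Lemma \ref{norm rel}(1) realises the action by $b\mapsto b\,t^c/t$, so all stabilizers coincide; since the group is abelian, faithfulness then forces the common stabilizer to be trivial, i.e.\ the action is free. The paper reads this off directly from the faithfulness clause of Lemma \ref{norm rel}(2), but it is worth spelling out rather than gesturing at orbit counting.
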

\begin{proof} This follows from Lemma \ref{norm rel}.2, by taking $D_{a}$ to be a fixed rational multiple (independent of $a$) of 
$$\tZ_{a}^{v}(\phi^{v\infty}) 
\sum_{\baar{b}\in \eqref{theset}}  [x(\baar{b})]_{U}.$$
The divisor is of degree zero by \cite[Proposition 8.1.1]{1}. Its prime components are not defined over proper subfields $H_{s'}\subset H_{s}$ because of the faithfulness statement of   Lemma \ref{norm rel}.2.
\end{proof}

\subsection{Intersection multiplicities on arithmetic surfaces}\lb{42}
Before continuing, we  gather some  definitions and a key  result.
\subsubsection{Ultrametricity of intersections on surfaces}
Let $\X$ be a $2$-dimensional   regular Noetherian scheme, finite flat over a field $\kappa$ or  a discrete valuation ring $\OO$ with residue field $\kappa$. 
We denote by $(\ \cd \ )_{\X}$ the usual $\Z$-bilinear intersection-multiplicity pairing of divisors intersecting properly on $\X$;   for effective divisors $D_{j}$ ($j=1, 2)$ with $\OO_{D_{j}}=\OO_{\X}/ \cI_{j}$, it is defined by
$$(D_{1}\cd D_{2})_{\X}= {\rm length}_{\kappa}\OO_{\X}/(\cI_{1}+\cI_{2}).$$
The subscript $\X$ will be omitted when clear from context.

We will need  the following result of
 Garc\'ia Barroso, Gonz\'alez P\'erez and Popescu-Pampu.
  \begin{prop} \lb{ultra} 
  Let $R$ be a noetherian  regular local ring of dimension $2$, which is a flat module over a field or a discrete valuation ring. Let  $\Delta $ be any irreducible curve in $\Spec R$. Then the function 
\beqq
d_{\Delta}(D_{1}, D_{2}):= \begin{cases}
(D_{1} \cd \Delta) (D_{2} \cd \Delta)/(D_{1}\cd D_{2}) & \text{if $D_{1}\neq D_{2}$}\\
0 & \text{if $D_{1}=D_{2}$}
\end{cases}
\eeqq
is an ultrametric distance on the space of irreducible curves in $\Spec R$ different from $\Delta$.
 \end{prop}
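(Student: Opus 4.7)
The plan is to establish the ultrametric inequality
\[
d_\Delta(D_1, D_3) \leq \max\bigl(d_\Delta(D_1, D_2), d_\Delta(D_2, D_3)\bigr)
\]
for three pairwise distinct irreducible curves $D_1, D_2, D_3$ in $\Spec R$, all different from $\Delta$. Since completing $R$ at its maximal ideal preserves the intersection multiplicities appearing in $d_\Delta$ (and is compatible with the inequality, even if some irreducible $D_i$ further decomposes in the completion), I would first reduce to the case where $R$ is complete.

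Next, for each irreducible curve $C \subset \Spec R$ with local equation $f_C$, I would consider the integral closure $\OO_C$ of $R/(f_C)$, which is a complete discrete valuation ring, and the induced parametrisation $\gamma_C\colon R\to \OO_C$. This yields a discrete semivaluation $\nu_C(g):= v_{\OO_C}(\gamma_C^{*}g)$ on $R$. A local form of Noether's formula gives, for distinct irreducible curves $C, C'$, the symmetric identity $(C\cdot C')_{\Spec R} = \nu_C(f_{C'}) = \nu_{C'}(f_C)$. Normalising by $\overline{\nu}_C := \nu_C/\nu_C(f_\Delta)$ anchors each semivaluation at $\Delta$, and each number $d_\Delta(D_i, D_j)$ can then be re-expressed in terms of the pointwise minimum $\min(\overline{\nu}_{D_i}, \overline{\nu}_{D_j})$, evaluated on suitable test functions $f_{D_k}$.

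The proof would then invoke the tree structure on the space of normalised semivaluations on a two-dimensional complete regular local ring, rooted at the divisorial valuation associated to $\Delta$. This is the \emph{valuative tree} developed by Favre--Jonsson in the equal-characteristic case and extended in the cited work to the generality at hand. In such a rooted real tree, the pointwise minimum of two semivaluations is itself a semivaluation (corresponding to the meeting point of the two geodesics to the root), and the ultrametric inequality reduces to the elementary statement that among the three pairwise meeting heights of three points in a rooted tree, the two smallest coincide. Reading this back through the identification $\min(\overline{\nu}_{D_i}, \overline{\nu}_{D_j})(f_{D_k}) \leftrightarrow d_\Delta$ produces exactly the desired inequality, and the equality case pinpoints the pair of curves whose geodesics coincide furthest from $\Delta$.

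The main obstacle, which is the content of the cited work, is the combined task of (i) establishing the tree structure on semivaluations in the present generality --- regular local Noetherian of dimension two, flat over a field \emph{or} a discrete valuation ring, so allowing mixed-characteristic arithmetic surfaces --- and (ii) making the link between the intersection-theoretic definition of $d_\Delta$ and the tree-theoretic interpretation uniform enough to apply to arbitrary irreducible curves $D_i$. Step (i) is delicate because the classical constructions rely on Puiseux-type expansions in formal power series over an algebraically closed field, which do not transport directly to the arithmetic setting needed here, and one has to replace such expansions by the normalisation $\OO_C$ and careful base-change arguments.
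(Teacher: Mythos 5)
Your proposal takes a genuinely different route from the paper, and there is a gap in it.

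The paper's proof is essentially a two-line reduction: it observes that the proposition is proved in \cite{ultrametric} for rings containing $\C$, and that the argument there uses only two geometric inputs, namely (i) embedded resolution of divisors on a regular $2$-dimensional scheme and (ii) negative-definiteness of the intersection matrix of the exceptional fibre of a projective birational morphism between such schemes. Both inputs remain available for $2$-dimensional regular Noetherian schemes flat over a DVR (the relevant results in Liu's book cover the arithmetic case), so the proof of \cite{ultrametric} transfers verbatim. No valuative-tree machinery is needed.

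Your plan instead routes the ultrametric inequality through the Favre--Jonsson valuative tree: normalise the semivaluations $\nu_C$ at $\Delta$, embed the branches into a rooted real tree, and reduce to the statement about meeting heights of geodesics. Over $\C$ this is a coherent (if heavier) alternative. The gap is in how you propose to transport it to the arithmetic setting: you assert that the valuative tree is "extended in the cited work to the generality at hand," but \cite{ultrametric} works over $\C$ and does not develop the valuative tree in mixed characteristic. Establishing the tree structure for semivaluations on a $2$-dimensional regular local ring flat over a DVR is a substantial project in its own right, far beyond "replacing Puiseux expansions by the normalisation $\OO_C$ and careful base change," and it is precisely the kind of heavy lifting that the paper's approach is designed to sidestep. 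You correctly sense that the main obstacle is passing from the equal-characteristic to the arithmetic setting, but you point to the wrong tool and the wrong reference; the economical move is to inspect what \cite{ultrametric} actually uses and check those two facts transfer, not to rebuild the valuative tree.
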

 \begin{proof} 
 For those rings $R$   that further satisfy the property of containing $\bC$, this is proved  in \cite{ultrametric}. 
 The proof only relies on (i) the existence of  embedded resolutions of divisors in the spectra of such rings, and (ii)  the negativity  of the intersection matrix of the exceptional divisor of a projective birational morphism between spectra of such rings. 
Both results still hold under our  weaker assumptions: see 
  \cite[Theorem 9.2.26]{qliu} for (i) and 
 \cite[Theorem 9.1.27, Remark 9.1.28]{qliu} for (ii).
 \end{proof}

\subsubsection{Arithmetic intersection multiplicities}\lb{flex}
Suppose now that   $\X$ is a $2$-dimensional regular Noetherian scheme,  proper flat over a discrete valuation ring  $\OO$ with residue field $\kappa$. A divisor on $\X$ is called \emph{horizontal} (respectively \emph{vertical}) if each of the irreducible components of the support $|D|$ is flat over $\OO$ (respectively  contained in the special fibre $\X_{\kappa}$).  We  extend $(\ \cd \ ):=(\ \cd \ )_{\X}$  to a bilinear form $(\ \bullet \ )$ on  pairs of divisors on $\X$ sharing no common horizontal irreducible component of the support by 
$$(\X_{\kappa}  \bullet V ):= 0$$ 
if $V$ is any vertical divisor. 

 Denote by $X$ the generic fibre of $\X$. If $D\in \Div^{0}(X)$ with Zariski closure $\baar{D}$ in $\X$, a  \emph{flat extension} of $D$ is a  divisor $\widehat{D}\in \Div(\X)_{\Q}$ such that $\widehat{D}-\baar{D}$ is vertical and 
$$(\widehat{D}\bullet V)=0$$
for any vertical divisor $V$ on $\X$. A flat extension of $D$  exists and it is unique up to   addition of rational linear combinations  of the connected components of  $\X_{\kappa}$.

  The  arithmetic intersection multiplicity  on divisors with disjoint supports  in ${\rm Div}^{0}(X)$ is then defined by
$$m_{X}(D_{1}, D_{2}):=(\baar{D}_{1}\bullet \widehat{D}_{2}) = (\widehat{D}_{1}\bullet \baar{D}_{2}) \in \Q.$$

\subsection{Decay of intersection multiplicities}\lb{43}
We continue using the notation introduced in  \S~\ref{41}. Let $m_{\baar{w}}:= m_{X_{H_{0, \baar{w}}}}$. 
Developing the approximation argument sketched in the introduction, we will   show that  for any degree-$0$ divisor $D$ on $X_{H_{0}}$, we have
$$m_{\baar{w}}(\wtil{Z}_{a\vpi^{s}}(\phi^{\infty}) [1]_{U}, D)= O(q_{F,v}^{s})$$
in $L$, uniformly in $a$.

Let $U_{0,v}:= \GL_{2}(\OO_{F, v})\subset \B_{v}^{\ts}$, let $X_{0}:= X_{U^{v}U_{0,v}}$, let $\X_{0}$ be the canonical model of $X_{0, F_{v}}$ over $\OO_{F, v}$, which is smooth  (see \cite{carayol}), and let $\X_{0}'$ be its base-change to   $\OO_{H_{0, \baar{w}}}$.
 Let $\X$ be the integral closure of  $\X_{0}'$ in $X_{H_{0, \baar{w}}}$, which is a regular model of  $X_{H_{0, \baar{w}}} $ over   $\OO_{H_{0, \baar{w}}}$,  and let ${\rm p}\colon \X\to \X_{0}'$ be the natural map. Thus we have a diagram 
\begin{equation*}
\xymatrix{
    X_{H_{0, \baar{w}}} \ar[d]  \ar[r]  & \X \ar[d]^{\rm p}\\
X_{0,H_{0, \baar{w}}} \ar[r]\ar[d]& \X_{0}'=\X_{0, \OO_{H, \baar{w}}}\ar[d]\\
X_{0,E_{v}} \ar[r]& \X_{0, \OO_{E,{w}}}
}
\end{equation*}
 of curves and regular integral models. 
 (The bottom row will be used in proving Lemma \ref{int D} below.)

\subsubsection{Some intersection multiplicities}
As a preliminary, we first compute the intersection multiplicities of   Zariski closures of  CM points with the special fibre of $\X$,  then bound  their intersections with horizontal divisors.

We denote by $\kappa$ the residue field of $H_{0, \baar{w}}$, and by $k$ the algebraic closure of $\kappa$.  For a scheme $\mathscr{C}$ and a point $y\in \mathscr{C}$, we denote   $\mathscr{C}_{y}:=\Spec \OO_{\mathscr{C}, y}$.

\begin{lemm} \lb{int sp}
Let $z_{s}\in X_{H_{0}}$ be a CM  point with pseudo-conductor $s$,  let $\baar{z}_{s}$ be its closure in $\X$, and let  $y\in \X_{\kappa}$ be its reduction modulo $\baar{w}$. 
Then
$$(\baar{z}_{s} \cd [\X_{y,\kappa}])_{\X_{y}}  =[\kappa(y):\kappa] q_{F,v}^{s}.$$
\end{lemm}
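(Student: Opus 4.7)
The plan is to identify $\baar z_s$ locally at $y$ with the spectrum of a certain local order in $\OO_{H_{s,\baar w}}$, and to extract the intersection multiplicity directly from its $\OO_{H_{0,\baar w}}$-rank.

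First, I would apply the valuative criterion of properness to $\X\to \Spec\OO_{H_{0,\baar w}}$ in order to extend the point $z_s\colon \Spec H_{s,\baar w}\to \X$ uniquely to a morphism $\widetilde z_s\colon \Spec\OO_{H_{s,\baar w}}\to \X$ sending the closed point to $y$. This morphism factors through the scheme-theoretic image $\baar z_s$, so one may write $\baar z_s\cap \X_y = \Spec A$ with $A \hookrightarrow \OO_{H_{s,\baar w}}$ a local Noetherian domain of fraction field $H_{s,\baar w}$ -- equivalently, a local order whose normalisation is $\OO_{H_{s,\baar w}}$.

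Next, since $\X$ is proper over $\OO_{H_{0,\baar w}}$, the morphism $\baar z_s\to\Spec\OO_{H_{0,\baar w}}$ is finite, so $A$ is a finite torsion-free $\OO_{H_{0,\baar w}}$-module; being torsion-free over a discrete valuation ring it is free, of rank $[H_{s,\baar w}:H_{0,\baar w}]=q_{F,v}^s$. If $\pi$ is a uniformiser of $\OO_{H_{0,\baar w}}$, then the special fibre $[\X_{y,\kappa}]$ is cut out on $\X_y$ by $\pi$, and the intersection multiplicity is, by the definition recalled earlier,
$$(\baar z_s\cdot[\X_{y,\kappa}])_{\X_y} \;=\; \mathrm{length}_\kappa\, A/\pi A \;=\; \dim_\kappa A/\pi A \;=\; \mathrm{rank}_{\OO_{H_{0,\baar w}}} A \;=\; q_{F,v}^s.$$

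Finally, to match the asserted formula, I would observe that $\kappa(y)=\kappa$ in the situation at hand: the structure map of $\X$ gives $\kappa\hookrightarrow\kappa(y)$, while the local ring homomorphism $\OO_{\X,y}\to\OO_{H_{s,\baar w}}$ induced by $\widetilde z_s$ produces a field injection $\kappa(y)\hookrightarrow\OO_{H_{s,\baar w}}/\mathfrak m_{H_{s,\baar w}}=\kappa$, where the last equality uses total ramification of $H_{s,\baar w}/H_{0,\baar w}$. Thus $[\kappa(y):\kappa]q_{F,v}^s = q_{F,v}^s$, as required. I do not expect a serious obstacle here: the substantive inputs are merely the properness of $\X$ over $\OO_{H_{0,\baar w}}$ (needed for finiteness of $A$) and the total ramification of $H_{s,\baar w}/H_{0,\baar w}$ (fixing both the rank of $A$ and the residue field at $y$); once these are in hand the identity is essentially tautological, reflecting that the generic-fibre degree of the finite map $\baar z_s\to \Spec\OO_{H_{0,\baar w}}$ coincides with the $\kappa$-length of its special fibre.
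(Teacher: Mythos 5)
Your proof is correct, and it takes a genuinely different and more elementary route than the paper's. The paper deduces the lemma from Gross's theory of quasicanonical liftings: via the projection formula it passes from $\X$ down to the canonical model $\X_{0}$ over $\OO_{E^{\mathrm{un}}}$, identifies the completed local ring at a supersingular point with the deformation ring $\OO_{E^{\mathrm{un}}}\llbracket u\rrbracket$, and uses Gross's result that the closure of a conductor-$t$ CM point has local ring $\OO_{E^{(t)}}$, reading off the intersection multiplicity as $d_{t}=[E^{(t)}:E^{\mathrm{un}}]$. You bypass all of this by extending $z_{s}$ over $\OO_{H_{s,\baar w}}$ via the valuative criterion of properness, observing that the local ring $A$ of $\baar z_{s}$ is then a finite torsion-free (hence free) $\OO_{H_{0,\baar w}}$-module of rank $[H_{s,\baar w}:H_{0,\baar w}]=q_{F,v}^{s}$, and computing $(\baar z_{s}\cdot[\X_{y,\kappa}])_{\X_{y}}=\operatorname{length}_{\kappa}A/\pi A=\operatorname{rank}A$. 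This requires only that $\X$ be proper over $\OO_{H_{0,\baar w}}$ and that $H_{s,\baar w}/H_{0,\baar w}$ be totally ramified, and it has the pleasant side effect of showing $\kappa(y)=\kappa$, so that the factor $[\kappa(y):\kappa]$ in the stated formula is in fact always $1$ here — something the paper's argument leaves implicit. Your approach does not make Gross's theory dispensable overall: the paper still needs it in the proof of Proposition \ref{approx}, where the coordinate $u\mapsto\vpi^{(t)}$ is used to bound the intersection multiplicities $(\Delta\cdot z_{s})$ of a fixed horizontal divisor with CM points of growing conductor — a genuinely finer statement that your rank argument does not touch. But for Lemma \ref{int sp} itself, your argument is cleaner and strictly more general (it applies verbatim to any horizontal prime divisor whose generic point has residue field totally ramified over the base).
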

(Recall that, following \S~\ref{sec:not},  $\X_{y, \kappa}$ denotes the special fibre of $\X_{y}$.)
\begin{proof}
We will deduce this from Gross's theory of quasicanonical liftings \cite{gross}, which we recall. 
The situation is purely local and we drop all subscripts $v$, $w$,  and $\baar{w}$. For a finite  extension $K\supset E$ contained in $E^{\rm ab}$,  let $K^{\rm un}$ be  the maximal unramified extension of $K$ contained in $E^{\rm ab}$ (thus the  residue field of $K^{\rm un}$ is identified with $k$).

 By \cite[\S 7.4]{carayol}, for any supersingular point $y_{0} \in \X_{0,\OO_{E}^{\rm un}}$, the completed local ring of  $\X_{0, \OO_{E}^{\rm un}}$ at $ y_{0}$ is isomorphic to  $ \OO_{E^{\rm un}}\llb u\rrb$ and it is the deformation ring of formal modules studied by Gross. 
The  main result of \cite{gross} is  that  for any CM point $z_{0}\in X_{0,{E}^{\rm un}}$, there exist a unique integer $t$ (the \emph{conductor} of $z_{0}$) such that the following hold. First, the field $E^{\rm un }(z_{0})$ is  the abelian extension $E^{(t)} $ of $E^{\rm un}$ with norm group $(\OO_{F}+\vpi_{v}^{t}\OO_{E})^{\ts}/\OO_{F}^{\ts}$, which is totally ramified of some degree $d_{t}$.
 Second,  the inclusion of the Zariski closure $\baar{z}_{0}\into \X_{0, \OO_{E}^{\rm un}}$ gives rise to a map of complete local rings 
$$  \OO_{E^{\rm un}}\llb u\rrb \to \OO_{E^{(t)}} =\OO(\baar{z}_{0})$$
which sends $u$ to a uniformiser $\vpi^{(t) }$ of $E^{(t)}$.  It follows that if $\mu_{t}$ is the minimal polynomial of $\vpi^{(t)}$, 
\beq \lb{gross int}
(\baar{z_{0}} \cd  \X_{0, k})_{\X_{0, \OO_{E}^{\rm un}, y_{0}}}
= \dim_{k} \OO_{E^{\rm un}}\llb u\rrb /(\vpi_{E}, \mu_{t}(u)) =  \dim_{k}\OO_{E^{(t)}}/ \vpi_{E}= d_{t}.\eeq

Consider now the situation of the lemma. By the projection formula,  
$$(\baar{z}_{s} \cd [\X_{y, \kappa}] )_{\X_{y}} =( \baar{z}_{s} \cd  {\rm p}^{*}\X_{0,\kappa_{0}} )_{\X_{y}}=( {\rm p}_{*}\baar{z}_{s} \cd \X_{0,\kappa})_{\X'_{0,y'}} =
 [H_{0}(z_{s})\colon H_{0}(z_{0,s})] \cd 
(\baar{z}_{0,s} \cd \X_{0,\kappa })_{\X'_{0,y'}}  $$
where $y'={\rm p}(y)$ and $z_{0, s}= {\rm p}(z_{s})\in X_{0, H_{0}}$ is a CM point.   The last intersection multiplicity is $[\kappa(y):\kappa]$ times the multiplicities of the  base-changed divisors  to the ring of integers of $H_{0}^{\rm un}$, where $z_{0, s}$ remains an irreducible divisor since $H_{0}(z_{0, s})\subset H_{0}(z_{s})$ is totally ramified over $H_{0}$. We perform such base-change to $\OO_{H_{0}^{\rm un}}$ without altering the notation. Let $z$ be the image of $z_{0,s}\in X_{H_{0}^{\rm un}}$ in $X_{E^{\rm un}}$, and let $t$ be the conductor of $z$; so $E^{(t)}\subset H_{0}^{\rm un}(z_{0,s})$. The fibre    above $z\cong \Spec E^{(t)}$ in $X_{H_{0}^{\rm un}}$ is 
$$\Spec E^{(t)}\otimes_{E^{\rm un}} H_{0}^{\rm un}= \Spec H_{0}^{\rm un}(z_{0,s})^{\oplus c}$$
for $c=[E^{(t)}: E^{\rm un}] \cd [H_{0}^{\rm un}:E^{\rm un}]/ [H_{0}^{\rm }(z_{0,s}): H_{0}^{\rm }] $, and $z_{0,s}$ is one of the factors in the right hand side. By the projection formula applied to $ \X_{0}\ts \Spec \OO_{H_{0}^{\rm un }} \to \X_{0}\ts \Spec \OO_{E^{\rm un}}$ and \eqref{gross int}, we have  
\beqq \
 [H_{0}(z_{s})\colon H_{0}(z_{0,s})] \cd (\baar{z}_{0,s} \cd \X_{0,\kappa })_{\X'_{0,y'}} 
&=[\kappa(y):\kappa] c^{-1} \cd
  [H_{0}(z_{s})\colon H_{0}(z_{0,s})] \cd [H_{0}^{\rm un}:E^{\rm un}] \cd 
d_{t} \\&=[\kappa(y):\kappa] \cd
 [H_{0}(z_{s}): H_{0}]  \cd d_{t}^{-1} \cd d_{t}= [\kappa(y):\kappa] q_{F}^{s},
 \eeqq
 as desired.
\end{proof}

\begin{lemm} \lb{int D}
Let $\Delta$  be an irreducible horizontal divisor in $\X$. {The intersection multiplicities $$(\Delta\cd \baar{z}) $$ are bounded by an absolute constant as $z$ ranges among CM points of sufficiently large pseudo-conductor reducing to $y$.}
\end{lemm}
\begin{proof}
The intersection multiplicity   $(\Delta\cd \baar{z})$ is bounded by the degree of the natural map ${\rm q}\colon \X\to \X_{0, \OO_{E,{v}}}$ near $y$, times the intersection multiplicities of the pushforward divisors to $\X_{0, \OO_{E,v}}$. Similarly to   the proof of Lemma \ref{int sp}, we may estimate this intersection in  the base change of $\X_{0}$ to $\OO_{E}^{\rm un}$. 
 The base-change of the divisor ${\rm q}_{*}z_{}$ equals a sum of  CM points of  $\X_{0, \OO_{E^{\rm un}}}$; because the extensions $H_{s}/H_{0}$ are totally ramified, the number of points in this divisor  is bounded by an absolute constant, and the conductors of all those CM points go to infinity with the pseudo-conductor~$s$ of~$z$. Thus it  suffices to show that if $\Delta_{0}$ is a fixed horizontal divisor  on $\X_{0, E^{\rm un}}$, its intersection multiplicity with CM points of conductor~$t$ is bounded as $t\to \infty$. 

 Let $z\in \X_{0, \OO_{E}^{\rm un}}$ be a CM point of conductor $t$, and let $y_{0}\in \X_{0, k}$ be the image of the reduction of $t$.  Now write  the image of $\Delta_{0}$ in the  completion of $\X_{0, \OO_{E^{\rm un}}}$ at $y_{0}$ as  $$ \widehat{\Delta}_{0}=\Spec \OO_{E^{\rm un}}\llb u\rrb /(f)\subset \Spec \OO_{E^{\rm un}}\llb u\rrb,$$ with $f=\sum_{i=1}^{d} a_{i}u^{i}$ an integral non-constant monic polynomial. Let ${\wtil f}\in k[u]$ be the  reduction of $f$.
 Then 
$$(\Delta_{0}\cd \baar{z})= 
\dim_{k}\OO_{E^{\rm un}}\llb u\rrb / (f, \mu_{t})=\dim_{k} \OO_{E^{(t)}}/(f(\vpi^{(t)}))
=\dim_{k} \OO_{E^{(t)}}/((\vpi^{(t)})^{\deg (\tilde{f})})=\deg (\tilde{f})\leq d$$
if $t$ is sufficiently large, since the normalised valuations of $\vpi^{(t)}$ decrease to $0$ as $t\to \infty$. This completes the proof of the lemma.
\end{proof}

\subsubsection{Approximation by vertical divisors}
The following proposition contains the essential new ingredient of this work.  We denote by
$$ {\rm CM}(X_{H_{0}})_{\geq s}\supset {\rm CM}(X_{H_{0}})_{s} $$ respectively the set of  CM points of $X_{H_{0}}$ that have pseudo-conductor  at least, or equal to, a given integer~$s$. We denote by $\cV$ the set of irreducible components of  $\X_{\kappa}$ (henceforth: `vertical components'), and if $y\in \X_{\kappa}$ is a closed point we denote by $\cV_{y}\subset \cV$ the set of vertical components of $\X_{y, \kappa}$.
 We still use a bar to denote Zariski closure. 

\begin{prop} \lb{approx}  There exist an integer $s_{0}\geq 1$, depending only on $X_{H_{0}}$, and a function
$$(V, \rho)\colon {\rm CM}(X_{H_{0}})_{\geq s_{0}} \longrightarrow \cV\ts \Q^{\ts}$$
 satisfying the following property. 

For every divisor $D\in \Div(\X)_{L}$, there exists a constant $s_{D}\geq s_{0}$ depending only on the support of $D$, such that if $z\in X_{H_{0}}$ is a CM point of conductor $s\geq s_{D}$, then 
 $(\baar{z}\cd D) $ may be computed as follows.
Let $V=V(z)$, $\rho=\rho(z)$, and write  
$$D= c\X_{\kappa} +D'$$
with $c\in L$ and $D'\in \Div(\X)_{L}$ a divisor whose support does not contain $V$.
 Then 
\beq\lb{int for} (\baar{z}\cd D) = 
c[\kappa(y):\kappa]  q_{F,v}^{s}+ \rho (V\bullet D),
\eeq
where
 $y\in \X_{\kappa}$ denotes the reduction of $z$ modulo $\baar{w}$.
\end{prop}
\begin{rema}
The vertical  component $V(z)$ will be   characterised as the one maximising  the intersection multiplicity with $\baar{z}$.
We refer the reader to \cite[\S~2; see also Figure 1 in \S~1]{equi} for an equivalent and possibly more vivid geometric description\footnote{Note however that the substantial results of \cite{equi}  hold for the curve $X$ \emph{over  the field $F_{v}$}.} of the relation of $V$ to $z_{s}$:
 one can define pairwise disjoint open subsets (`geometric basins') of  the Berkovich analytification of $X$, labelled by the irreducible components of the special fibre; then $z_{s}$ belongs to the basin corresponding to $V$.
\end{rema}
\begin{proof}
We will omit all subscripts $v$, $w$,  $\baar{w}$ and use some of the notation introduced in the proof of Lemma \ref{int sp}.

Let $y\in \X_{\kappa}$ be a closed point, and write  $[\X_{y, \kappa}]= \sum_{V' \in \cV_{y}} e_{V'}V'$ as divisors.
By Lemma \ref{int sp}, the weighted sum  
\beq\lb{the q}
\sum_{V'} {e_{V'} }
(\baar{z}_{s}\cd V')=(\baar{z}_{s}\cd [\X_{y, \kappa}])=  [\kappa(y)\colon \kappa] q_{F}^{s}\eeq
is independent of the choice of a CM point $z_{s}\in X_{H_{0}}$ of pseudo-conductor $s$ reducing to $y$. As \eqref{the q} goes to infinity with~$s$ (and the coefficients $e_{V'}$ are independent of $s$), the quantity
 \beq\lb{eq min} \textstyle{
 \max_{V'\in \cV_{}}\,
 (\baar{z}_{s}\cd V')}
  \eeq 
 (more precisely, the minimum of those maxima as $z_{s}$ varies in ${\rm CM}(X_{H_{0}})_{s}$) goes to infinity with $s$.

Fix now a CM point $z_{s}\in X_{H_{0}}$ of pseudo-conductor $s$, let $y\in \X_{\kappa}$ be its  reduction, and let  $V\in \cV_{y}$
 be a vertical  component realising the maximum in \eqref{eq min}. 
Let
 $D\neq V$ be an irreducible divisor in $\X_{y}$.  Pick any irreducible horizontal divisor   $\Delta \neq D, \baar{z}_{s}$ in $\X_{y}$, and consider the ultrametric distance $d_{\Delta}$ 
 of Proposition \ref{ultra} for $ R=\OO_{\X, y}$. (Note that $\Delta$ may be drawn from a finite set independent of $z_{s}$ and $D$; in fact we may fix  any set $\underline{\Delta}$ of at least two irreducible horizontal divisors that are not Zariski closures of CM points, and for given $D$ pick any $\Delta\in \underline{\Delta}-\{D\}$.)

By  the choice of $V$ and Lemma \ref{int D},  
 if $s$ is sufficiently large (a condition depending on $D$),   we have 
 $$d_{\Delta}(\baar{z}_{s}, V) = {(\baar{z}_{s}\cd \Delta) (V\cd \Delta)\over (\baar{z}_{s}\cd V)} < d_{\Delta}( V, D),$$
  so that by 
 Proposition \ref{ultra},
  $$d_{\Delta}(\baar{z}_{s}, D) =   d_{\Delta}( V, D).$$ 
Unwinding the definitions, 
\beq\lb{the for}
(\baar{z}_{s}\cd D)= \rho  (V\cd D)\eeq
for $ \rho:= {(\baar{z}_{s}\cd \Delta)/  (V\cd \Delta) }$. 
Applied to a vertical component $D=V'\neq V$, the formula \eqref{the for} together with Lemma \ref{int D} shows the uniqueness of the maximising $V =:V(z_{s})$ for large $s$; it  is   clear that  $\rho=:\rho(z_{s})$ is then  uniquely determined as well.
Now the intersection formula \eqref{int for} follows by linearity from \eqref{the for} and Lemma \ref{int sp}.
\end{proof}

\begin{coro}\lb{cococo} If $D\in \Div^{0}(X_{H_{0}})_{L}$ is any degree-zero divisor,  then  for all sufficiently large $s$ and all $a$,
$$m_{\baar{w}}( \wtil{Z}_{a\vpi^{s}}(\phi^{\infty}) [1]_{U}, D)=O(q_{F,v}^{s})$$
in $L$, where the implied constant can be fixed independently of $a$ and $s$.
\end{coro}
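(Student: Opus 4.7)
The plan is to combine Proposition \ref{pseudo}, which decomposes $\tZ_{a\vpi^s}[1]_U$ as a controlled sum of CM points of high pseudo-conductor, with Proposition \ref{approx}, which approximates intersections against such points up to a correction term that vanishes precisely when paired with a flat extension.

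First, by Proposition \ref{pseudo} applied to the index $a\vpi^s$ (of $v$-valuation $s' := v(a\vpi^s)$), the divisor $\tZ_{a\vpi^s}(\phi^{\infty})[1]_U$ on $X_{H_0}$ equals ${\rm Tr}_{H_{s'}/H_0}(D_{a\vpi^s})$, where $D_{a\vpi^s}$ is supported on a bounded (uniformly in $a$, $s$) number of CM points of pseudo-conductor $s'$, with coefficients of bounded denominator. Taking Zariski closures in $\X$ gives $\overline{\tZ_{a\vpi^s}[1]_U} = \sum_{z} n_z\, \baar{z}$, a uniformly bounded $\Q$-linear combination of closures of pseudo-conductor-$s'$ closed points $z$ of $X_{H_0}$. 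Hence $m_{\baar{w}}(\tZ_{a\vpi^s}[1]_U, D) = \sum_z n_z\,(\baar{z} \bullet \widehat{D})$, and the task is to bound each $(\baar{z} \bullet \widehat{D})$ uniformly.

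For $s$ sufficiently large, Proposition \ref{approx} (applied to $\widehat{D}$) produces, for each such $z$, a unique vertical component $V = V(z) \subset \X_\kappa$ and a $\rho = \rho(z) \in \Q^{\ts}$ of bounded denominator, together with a decomposition $\widehat{D} = c_V\, \X_\kappa + \widehat{D}'$ (with $V$ not contained in the support of $\widehat{D}'$), such that
$$(\baar{z} \bullet \widehat{D}) = c_V\, [\kappa(y_z):\kappa]\, q_{F,v}^{s'} + \rho\,(V \bullet \widehat{D}).$$
The defining property of the flat extension $\widehat{D}$ is that $(V \bullet \widehat{D}) = 0$ for every vertical $V$; hence the correction term vanishes, leaving $(\baar{z} \bullet \widehat{D}) = c_V\,[\kappa(y_z):\kappa]\, q_{F,v}^{s'}$. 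Since $\X_\kappa$ has only finitely many irreducible components, the $c_V$ and the degrees $[\kappa(y_z):\kappa]$ range in a finite set; combined with the uniform bounds of the previous paragraph, we deduce $m_{\baar{w}}(\tZ_{a\vpi^s}[1]_U, D) = O(q_{F,v}^{s'})$, uniformly in $a$ and $s$. The support constraints on $\phi^{\infty}$ ensure that $s' - s$ is bounded, giving the claimed $O(q_{F,v}^{s})$.

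The crux is the interaction in the third paragraph: Proposition \ref{approx} reduces an arbitrary intersection to a clean leading term plus an error $\rho\,(V \bullet \widehat{D})$, which is \emph{a priori} of unbounded order; it is exactly the flatness of $\widehat{D}$ that annihilates this error. Without passing to the flat extension, a single vertical component $V \subset \X_\kappa$ appearing in the chosen lift of $D$ would contribute an unbounded multiple of $q_{F,v}^{s'}$ to the relevant intersection, defeating the approximation strategy.
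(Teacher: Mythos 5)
Your proof is correct and follows essentially the same route as the paper's: pass to the flat extension $\widehat{D}$, use Proposition~\ref{pseudo} to reduce to closures of pseudo-conductor-$s'$ CM points with uniformly bounded coefficients, and then invoke Proposition~\ref{approx} together with the defining vanishing property of $\widehat{D}$ against vertical divisors to kill the error term. The only slight overstatement is the final sentence: the support of $\phi_p$ only forces $v(a\vpi_v^{s})\geq r_v$, not an upper bound on $v(a)$, so it is really the constraint $v(a)=r_v$ from the intended application (Proposition~\ref{is crit}) — a constraint the paper itself leaves implicit in the corollary's ``for all $a$'' — that fixes $s'-s$ and yields $O(q_{F,v}^{s})$ rather than $O(q_{F,v}^{s'})$.
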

\begin{proof} Let $\widehat{D}$ be a flat extension of $D$ to a divisor on $\X$ (with coefficients in $L$),
and abbreviate $Z_{a,s}:=  \wtil{Z}_{a\vpi^{s}}(\phi^{\infty}) [1]_{U}$. Then   by  Propositions \ref{pseudo} and \ref{approx}, 
$$m_{\baar{w}}( Z_{a,s}, D)= (\baar{Z}_{a,s}\cd \widehat{D}) = A q_{F,v}^{s}+ \sum_{i} \lm_{i}(V_{i} \bullet \widehat{D})$$
for some vertical components $V_{i}\subset \X $ and some  $A$, $\lm_{i}\in L$.
 By the definition of flat extension, $ (V_{i} \bullet \widehat{D})=0$ for all $i$. The constant $A$ is a linear combination  of the constants $c$ in  \eqref{int for} (which depend only on $\widehat{D}$), with coefficients whose denominators are bounded by those of $Z_{a,s}$; by Proposition \ref{pseudo}, the latter are bounded independently of $a$ and $s$.
 \end{proof}

\subsection{Decay of local heights}\lb{dec lh}
 Recall that we need to prove (Proposition \ref{is crit}) that 
 $$T_{\iota_{\frakp}}(\sigma^{\vee})\tZ(\phi^{\infty}, \chi)(v)$$ is a $v$-critical element of $ \baar{\bf S}{}'$.
 
  As in \cite[\S 9.2, proof of Proposition 9.2.1]{1}, this is reduced to the following.  For any $s\geq 0$, denote by $\la \ , \ \ra_{s,\baar{w}}$ the local height  pairing on $X_{H_{s, \baar{w}}}$, which is valued in   $H_{s, \baar{w}}^{\ts}\hat{\ot} L$; and let $$\la \ , \ \ra_{\baar{w}}=  [H_{s,\baar{w}}:F_{v}]^{-1}  \cd N_{H_{s,\baar{w}}/F_{v}} ( \la \ , \ \ra_{s,\baar{w}}),$$ which is valued in $F_{v}^{\ts}\hat{\ot} L\subset \Gamma_{F}\hat{\ot}L$ and is compatible with varying $s$ by \cite[(4.1.6)]{1}.
Then we will show that for all $\baar{w}\vert v$ and all $a\in \A^{S_{1}\infty, \ts}$ with $v(a)=r_{v}$, we have 
\beq  \lb{decaying}
 \langle \tZ_{a\vpi^{s}}(\phi^{\infty})[1]_{U}, {\rm T}_{\iota_{\frakp}}(\sigma^{\vee})_{U}^{\rm t} t_{\chi}\rangle_{\baar{w}} 
 &= O(q_{F,v}^{s}) \qquad& \text{in } {F}_{v}^{\ts}\hat{\ot}L(\chi)\\
  \langle \tZ_{a\vpi^{s}}(\phi^{\infty})[1]_{U}, {\rm T}_{\iota_{\frakp}}(\sigma^{\vee})_{U}^{\rm t} t_{\chi}\rangle_{0, \baar{w}} &= O(q_{F,v}^{s}) \qquad &\text{in } H_{0, \baar{w}}^{\ts} \hat{\ot}L(\chi)\\
\eeq
where the second statement implies the first one.  Until Lemma \ref{val cool} below, the argument follows the lines of previous works \cite{nekovar, ari, 1}.

\subsubsection{The norm relation and heights} Denote by  $N_{s}$   the norm from $H_{s , \baar{w}}$ to $H_{0, \baar{w}}$,  let $L':= L(\chi)$, and let $\frakp'\subset \OO_{L'}$  be the maximal ideal. By the norm relation of Proposition \ref{pseudo}, the aforementioned compatibility \cite[(4.1.6)]{1},  and the integrality result of \cite[Proposition 4.3.2]{1},\footnote{When comparing with the similar argument of  \cite[\S 9.2, proof of Proposition 9.2.1]{1}, our field $H_{s}$ should be assimilated to the $H'_{s}$ of \emph{loc. cit.}} 
\beq \lb{longg}
 \langle \tZ_{a\vpi^{s}}(\phi^{\infty}) [1]_{U}, {\rm T}_{\iota_{\frakp}}(\sigma^{\vee})_{U}^{\rm t} t_{\chi}\rangle_{0, \baar{w}} 
&= \langle {\rm Tr}_{H_{s}/H_{0}} (D_{a\vpi^{s}}), {\rm T}_{\iota_{\frakp}}(\sigma^{\vee})_{U}^{\rm t} t_{\chi}\rangle_{0, \baar{w}}   \\
&= 
N_{s}( \langle  D_{a\vpi^{s}}, {\rm T}_{\iota_{\frakp}}(\sigma^{\vee})_{U}^{\rm t} t_{\chi}\rangle_{s,\baar{w}})
\\
& \in \frakp'^{-( d_{00}+ d_{0}+d_{1,s}+d_{2,s})}   N_{s}(H_{s, \baar{w}}^{\ts}\hat{\ot} \OO_{L'})
\eeq
for some integers $d_{i, (s)}\geq 0$ that we now define and study.

\subsubsection{Boundedness of denominators} 
Let $$V':=\pi_{A^{\vee}}^{U}\ot_{M}  V_{\frakp}A^{\vee}(\chi^{-1}) \quad \subset\quad  V:= V_{p}J_{U} \ot_{\Q_{p}}L'$$
considered as  $\calG_{E_{w}}$-modules; let $V''$  be its direct complement  in the decomposition of $V$  in \cite[(9.2.4)]{1}, and let 
 $0\to V'^{+}\to V'\to V'^{-}\to 0$ be the ordinary filtration analogous to \eqref{splitting}. If $?\in\{ ', '', {}'^{+}\}$,  
let $T^{?}:= T_{p}J_{U}\ot_{\Z_{p}}\OO_{L'}\cap V^{?}$, and let $T'^{-}=T'/T'^{+}$. 
 Then the integers $d_{i, (s)}$ are defined as follows:
\begin{itemize}
\item
$d_{00}$ accounts for the denominators of the divisors, and it can be taken to be independent of $s$ by Proposition \ref{pseudo}; 
\item $d_{0}$ is such that $\frakp'^{d_{0}} T\subset T'\oplus T''$;
\item 
$d_{1,s}:={\rm length}_{\OO_{L'}}  H^{1}({H}_{s,\baar{w}}, T''^{*}(1))_{\rm tors};$
\item $d_{2, s}:=  {\rm length}_{\OO_{L'}}  H^{1}_{f}(H_{s, \baar{w}}, T')/{N}_{\infty} H^{1}_{f}(H_{s, \baar{w}}, T')$, where  $N_{\infty}$ denotes the universal norms (\cite[\S~6]{nekheights}, \cite[\S~4.3]{1}) with respect to the infinite abelian extension of $H_{s, \baar{w}}$ cut out by the closure in  $\calG_{H_{s, \baar{w}}}^{\rm ab}\supset H_{s, \baar{w}}^{\ts} $ of 
$$\Ker [ H_{s, \baar{w}}^{\ts} \stackrel{N_{H_{s, \baar{w}}/F_{v}}}{\longrightarrow} F_{v}^{\ts} \to \Gamma_{F}\hat{\ot} L].$$
\end{itemize}

\begin{prop} \lb{di bounded}  Suppose that $V_{\frakp}A $ is  potentially crystalline as a representation of $\calG_{F_{v}}$; then the sequences of integers $(d_{1,s})$ and $(d_{2, s})$ are bounded. 
\end{prop}

We will use the following vanishing result, in which $\baar{L}$ denotes an algebraic closure of $L'$.

\begin{lemm}\lb{d1s} Let $\Gamma_{\infty}:=\Gal(H_{\infty, \baar{w}}/E_{ {w}})\cong E^{\ts}\bks E^{\ts}_{\A^{\infty}}/ U^{v}U_{F,v}^{\circ}$. 
For all Hodge--Tate characters $\psi\colon \calG_{E_{w}}\to \baar{L}^{\ts}$ factoring through $\Gamma_{\infty}$, and for any 
$$V^{?}\in \{V,  V''^{*}(1),  V'^{+, *}(1), V'^{-}\},$$
we have 
$$H^{0}(E_{w}, V^{?}(\psi))= 0.$$
\end{lemm}
\begin{proof}
The proof is largely  similar to that of \cite[Lemma 9.2.4]{1}, to which we refer for the background on the $p$-adic Hodge theory of characters. 

We have $$ H^{0}(E_{w}, V^{?}) = {\bf D}_{\rm crys}(V^{?}(\psi))^{\vphi=1},$$
where $\vphi$ is the crystalline Frobenius, and it suffices to prove that $ {\bf D}_{\rm crys}(V^{?}(\psi))^{\vphi^{d}=1}=0$ for  $d=[E_{w}:E_{w, 0}]$ where $E_{w,0}$ the maximal unramified extension of $\Q_{p}$ contained in $E_{w}$.     As $V'$ has been assumed potentially crystalline, it is pure of weight $-1$,  hence so are all the subquotients of $V'$ and $V'^{*}(1)$. In particular, $\vphi^{d}$ acts with negative weights on  ${\bf D}_{\rm crys}(V^{?})$ for $V^{?}=V'^{+, *}(1), V'^{-}$; 
by \cite[Theorem 5.3]{mokrane}, this last assertion is also true of  $V^{?}=V\cong V^{*}(1)$ and its subquotients such as $V^{?}=V''^{*}(1)$.
 Therefore, it suffices to show that  $\vphi^{d}$ acts with weight $0$ on $ {\bf D}_{\rm crys}(\psi^{m})$ for $m$ such that $\psi^{m}$ is crystalline.  

Since $\psi$ is trivial on $U_{F}^{\circ}$, the Hodge--Tate weights $(n_{\tau})_{\tau\in \Hom(E_{w}, \baar{L})}$ satisfy $n_{\tau}+n_{\tau c}=0$ where $c$ is the complex conjugation of $E_{w}/F_{v}$. The action of $\vphi^{d}$ on  $ {\bf D}_{\rm crys}(\psi^{m})$ is by
\beq\lb{twof}
\psi\circ {\rm rec}_{E, w}(\vpi_{w})^{-m}\cd  \prod_{\tau\in \Hom(E_{w}, \baar{L})}\vpi_{w}^{mn_{\tau}},\eeq
where $\vpi_{w}\in E_{w}$ is any uniformiser. Choose $\vpi_{w}$ so that $\vpi_{w}^{e(E_{w}/F_{v})}=\vpi_{v}$ is a uniformiser in $F_{v}$. Then $\vpi_{w}^{c}=\pm \vpi_{w}$, so that the second factor in \eqref{twof} is $\pm 1$. On the other hand, the subgroup $F^{\ts}\bks F_{\A^{\infty}}^{\ts}/U_{F, v}^{\circ}(U^{v}\cap F_{\A^{\infty}}^{\ts})\subset \Gamma_{\infty}$ is finite, hence $\vpi_{v}$ and $\vpi_{w}$ have finite order in $\Gamma_{\infty}$. It follows that the first factor  in \eqref{twof}  is  a root of unity too, hence $\vphi^{d}$ acts with weight $0$ on $ {\bf D}_{\rm crys}(\psi^{m})$.
\end{proof}

\begin{proof}[Proof of Proposition \ref{di bounded}]
By the long exact sequence attached to 
$$0\to T''^{*}(1)\to V''^{*}(1)\to T''^{*}(1)\otimes L'/\OO_{L'}\to 0$$ and the vanishing of $H^{0}({H}_{s,\baar{w}}, V'^{*}(1))$ (which follows from Lemma \ref{d1s}), we have
\beqq
H^{1}({H}_{s,\baar{w}}, T''^{*}(1))_{\rm tors}
&\cong
H^{0}({H}_{s,\baar{w}}, T''^{*}(1) \ot_{\OO_{L'}} L'/\OO_{L'}) \\
&=  H^{0}\left(E_{w}, T''^{*}(1)\otimes_{\OO_{L}} \OO_{L}[\Gal({H}_{s,\baar{w}}/E_{w})]\ot_{\OO_{L}} {L'/\OO_{L'}} \right).
\eeqq
By \cite[Theorems 6.6, 6.9]{nekheights}  (or strictly speaking, a slightly  generalised form thereof which still holds true by the arguments in \cite[proof of Proposition 4.3.2]{1}) and the vanishing of   $H^{0}({H}_{s,\baar{w}}, V'^{+, *}(1)\oplus V'^{-})$ (which follows from Lemma \ref{d1s}), we have
\beqq
d_{2, s}&\leq
 {\rm length}_{\OO_{L'}}  H^{0} ({H}_{s,\baar{w}}, T'^{+,*}(1) \ot_{\OO_{L'}} L'/\OO_{L'})  
+ {\rm length}_{\OO_{L'}}  H^{0} ({H}_{s,\baar{w}}, T'^{-} \ot_{\OO_{L'}} L'/\OO_{L'}) \\
&={\rm length}_{\OO_{L'}}
 H^{0}\left(E_{w},(T'^{+,*}(1) \oplus  T'^{-}) \otimes_{\OO_{L}} \OO_{L}[\Gal({H}_{s,\baar{w}}/E_{w})]\ot_{\OO_{L}} {L'/\OO_{L'}} \right).
 \eeqq
Then the boundedness of $d_{1,s}$ and $d_{2,s}$ follows  as in \cite[proof of Proposition 8.10]{ari} from the vanishing of
$$H^{0}(H_{\infty, \baar{w}},  V^{?})
\quad \subset \bigoplus_{\psi\colon \Gamma_{\infty}\to \baar{L}^{\ts} \text{ Hodge--Tate}} H^{0}(E_{w}, V^{?}(\psi))
$$
for $V^{?}\in \{ V''^{*}(1),V'^{+,*}(1), V'\}$, which is a consequence of Lemma \ref{d1s}.
\end{proof}

\subsubsection{Completion of the proofs}
We are ready to reduce our  decay statement for local heights to the decay  statement for intersection multiplicities proved in \S~\ref{43}.

\begin{lemm} \lb{val cool}
For all $s'\leq s$, the restriction of the $\baar{w}$-adic valuation yields an isomorphism of $\OO_{L'}$-modules
$$ \baar{w}\colon N_{s}(H_{s, \baar{w}}^{\ts}\hat{\ot}\OO_{L'}) / q^{s'}\cd( H_{0, \baar{w}}^{\ts}\hat{\ot}\OO_{L'}) 
\to
  \OO_{L'}/q^{s'}\OO_{L'}.$$
\end{lemm}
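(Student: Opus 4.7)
The approach rests on the standard identity $\baar{w} \circ N_{s} = v_{s}$ on $H_{s,\baar{w}}^{\ts}$, where $v_{s}$ is the normalized valuation on $H_{s,\baar{w}}$. This identity is a general feature of norms in totally ramified extensions of local fields, and applies here because $H_{s,\baar{w}}/H_{0,\baar{w}}$ is totally ramified of degree $q_{F,v}^{s}$ (as recalled in \S\ref{41}, for $r_{v}$ sufficiently large). The identity extends $\OO_{L(\chi)}$-linearly to the completed tensor product; since $\baar{w}(q^{s'}(H_{0,\baar{w}}^{\ts}\hat{\ot}\OO_{L(\chi)})) \subseteq q^{s'}\OO_{L(\chi)}$, the valuation descends to a well-defined $\OO_{L(\chi)}$-linear map from the left-hand side of the lemma to $\OO_{L(\chi)}/q^{s'}\OO_{L(\chi)}$.

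For surjectivity, I would fix a uniformizer $\pi_{s}$ of $H_{s,\baar{w}}$; then $\pi_{0} := N_{s}(\pi_{s})$ is a uniformizer of $H_{0,\baar{w}}$, so $\baar{w}(N_{s}(\pi_{s}^{\alpha})) = \alpha$ attains every value in $\OO_{L(\chi)}$. For injectivity, observe that with this choice of uniformizer, every element of $N_{s}(H_{s,\baar{w}}^{\ts}\hat{\ot}\OO_{L(\chi)})$ may be written uniquely as $\pi_{0}^{\alpha} \cdot N_{s}(u_{s})$ with $\alpha \in \OO_{L(\chi)}$ and $u_{s} \in \OO_{H_{s,\baar{w}}}^{\ts}\hat{\ot}\OO_{L(\chi)}$, and its valuation is precisely $\alpha$. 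The injectivity of the descended map then reduces to the inclusion
$$N_{s}(\OO_{H_{s,\baar{w}}}^{\ts}\hat{\ot}\OO_{L(\chi)}) \subseteq q^{s'}(\OO_{H_{0,\baar{w}}}^{\ts}\hat{\ot}\OO_{L(\chi)})$$
for all $s' \leq s$.

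This last containment is the technical heart of the lemma, and is where I expect the main obstacle to lie. The plan is to combine local class field theory---which identifies $\OO_{H_{0,\baar{w}}}^{\ts}/N_{s}(\OO_{H_{s,\baar{w}}}^{\ts})$ with $\Gal(H_{s,\baar{w}}/H_{0,\baar{w}})$, a finite $p$-group of order $q_{F,v}^{s}$---with the explicit structure of the norm map on higher unit groups, translated via the $p$-adic logarithm (applicable for $r_{v}$ large, by the conventions of \S\ref{sec:not}) into the trace map on the corresponding ideals. A valuation-theoretic comparison, using the different of the totally ramified $p$-power extension $H_{s,\baar{w}}/H_{0,\baar{w}}$ together with careful tracking of how the $p$-adic structure of $\OO_{L(\chi)}$ interacts with the $q^{s'}$-filtration, should then upgrade the size bound coming from LCFT to the required containment.
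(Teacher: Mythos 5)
Your proof skeleton matches the paper's exactly: split $H_{*,\baar{w}}^{\ts}\hat{\ot}\OO_{L(\chi)}$ into the unit part and the uniformiser line, check surjectivity on the latter via $\vpi_{0}:=N_{s}(\vpi_{s})$, and reduce injectivity to the inclusion
$$N_{s}(\OO_{H_{s,\baar{w}}}^{\ts}\hat{\ot}\OO_{L(\chi)})\subseteq q^{s'}\cdot(\OO_{H_{0,\baar{w}}}^{\ts}\hat{\ot}\OO_{L(\chi)})$$
for $s'\leq s$. Up to that point the two arguments coincide.

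The divergence is that you leave this inclusion as a plan, whereas the paper's proof dispatches it in a single sentence, asserting that ``by local class field theory'' the image of $N_{s}$ on $\OO_{H_{s,\baar{w}}}^{\ts}\hat{\ot}\OO_{L(\chi)}$ is exactly $q^{s}\cdot(\OO_{H_{0,\baar{w}}}^{\ts}\hat{\ot}\OO_{L(\chi)})$. So your proposal has a genuine gap at precisely the step you yourself flag as the technical heart, and the heavier machinery you sketch (logarithm, trace, different) is not what the paper uses. More importantly, be careful when you try to close it: what local class field theory directly gives is $\OO_{H_{0,\baar{w}}}^{\ts}/N_{s}(\OO_{H_{s,\baar{w}}}^{\ts})\cong\Gal(H_{s,\baar{w}}/H_{0,\baar{w}})$, a finite $p$-group of order $q_{F,v}^{s}$. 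This controls the \emph{index} of the norm image but at face value yields $N_{s}(\OO_{H_{s,\baar{w}}}^{\ts})\supseteq(\OO_{H_{0,\baar{w}}}^{\ts})^{q^{s}}$, which is the opposite of the inclusion you (and the paper) invoke. The two only coincide when $\OO_{H_{0,\baar{w}}}^{\ts}$ has $\Z_{p}$-rank one, but here $[H_{0,\baar{w}}:\Q_{p}]\geq 2$ since $H_{0,\baar{w}}\supseteq E_{w}$ and $E_{w}/F_{v}$ is a quadratic field extension. In short: your reduction is the right one and agrees with the paper, but the key containment is not supplied, and the LCFT justification is more delicate than a one-line appeal suggests; you should spell out a complete argument there rather than leave it as a plan.
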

\begin{proof}
We drop all subscripts $\baar{w}$. Recall that the extension $H_{s}/H_{0}$ is totally ramified of degree $q^{s}$. 
Let $\vpi_{s}\in \OO_{H_{s}}$ be a uniformiser; then  $\omega_{0}:=N_{s}(\vpi_{s})$ is a uniformiser of $H_{0} $. For $*=0,s$ we have the decompositions $$H_{*}^{\ts}\hat{\ot} \OO_{L'} =\OO_{H_{*}}^{\ts}\hat{\ot}\OO_{L' }\oplus \vpi_{*}\ot\OO_{L'}.$$ The map $N_{s}$ respects the decompositions and, by local class field theory, has image 
$$q^{s}\cd(\OO_{H_{0}}^{\ts}\hat{\ot}\OO_{ L'}) \oplus \vpi_{0}\ot\OO_{L'}.$$
The valuation map annihilates the first summand and sends the second one isomorphically to $\OO_{L'}$. The result follows.
\end{proof}

\begin{proof}[Proof of Proposition \ref{is crit}]
By the comparison of the valuation-component of local heights with arithmetic intersections in
\cite[Proposition 4.3.1]{1},  applied to the curve $X_{U,H_{0}}$,  the image of the left-hand side of   \eqref{longg} under $\baar{w}$ is
\beq 
\lb{pollo}
  m( \tZ_{a\vpi^{s}}(\phi^{\infty})_{U}[1], {\rm T}_{\iota_{\frakp}}(\sigma^{\vee})_{U}^{\rm t} t_{\chi}).
\eeq
By Corollary \ref{cococo} applied to $D= {\rm T}_{\iota_{\frakp}}(\sigma^{\vee})_{U}^{\rm t} t_{\chi}$, 
 the right-hand side of \eqref{pollo} is $O(q_{F,v}^{s})$.  By \eqref{longg}, Proposition \ref{di bounded} and Lemma \ref{val cool}, we deduce the  desired decay statement \eqref{decaying}.
\end{proof}

\paragraph{Summary} We have just completed the proof of Proposition \ref{is crit}. It implies Proposition \ref{kill lp}, which together with  Theorem \ref{theo local comp} implies the kernel identity of Theorem \ref{ker id}. By Lemma \ref{implic}, that  implies Theorem \ref{MT2}, which is in turn  an equivalent form of Theorem \ref{MT} by Lemma \ref{is eq}.

\bigskip

\paragraph*{\textsc{Acknowledgment}} I would like to thank  the referees for a  sharp reading.

\appendix
\section{Local integrals}
Throughout this appendix, $v$ denotes a place of $F$ above $p$ unless specified otherwise. We use some of the notation introduced in \S~\ref{sec:not}, in particular the Weil representation $r$ (see \cite[\S3.1]{1} or \cite{yzz} for the formulas defining it).

\subsection{Interpolation factors} 

We relate the  interpolation factors of the $p$-adic $L$-function of this paper with those from  \cite{1}.
\begin{lemm} \lb{int gamma}
Let $\xi\colon E_{w}^{\ts}\to \bC^{\ts}$ and $\psi \colon E_{w} \to \bC^{\ts}$ be  characters, with $\psi\neq \one$. Let $dt$ be a Haar measure on $E_{w}^{\ts}$.
Then 
$$\int_{E_{w}^{\ts}} \xi(t) \psi(t) dt={d^{}t \over d^{}_{\psi}t}  \cdot \xi(-1)\cdot  \gamma(\xi, \psi)^{-1} .$$
\end{lemm}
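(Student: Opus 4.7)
The identity is the classical expression of Tate's local $\eps$-factor as a Gauss-type integral. My approach has three steps.

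First, I would unwind the definition of $\gamma(\xi,\psi)^{-1}$ from the body of the paper:
$$\gamma(\xi,\psi)^{-1}=\frac{L(\xi)}{\eps(\xi,\psi)\,L(\xi^{-1}|\cdot|)}.$$
Both sides of the claimed identity are meromorphic in the twisting parameter $s$ when $\xi$ is replaced by $\xi|\cdot|^{s}$, so it suffices to verify the formula on a Zariski-dense locus. The cleanest choice is the locus where $\xi$ is ramified of conductor $c(\xi)\geq 1$; there both $L$-factors are trivial, so $\gamma(\xi,\psi)^{-1}=\eps(\xi,\psi)^{-1}$, and the integral $\int_{E_{w}^{\times}}\xi(t)\psi(t)\,dt$ converges absolutely.

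Second, I would partition the integral into valuation shells $\{v(t)=k\}$. For $\xi$ ramified, orthogonality of characters on the compact subgroup $1+\vpi_{w}^{c(\xi)}\OO_{E,w}$ forces every shell integral to vanish except the unique critical one at $k_{0}=-c(\xi)-d_{\psi}$, on which the integrand is constant modulo $1+\vpi_{w}^{c(\xi)}\OO_{E,w}$; there the integral reduces to a finite Gauss sum. Writing $dt=(dt/d_{\psi}t)\,d_{\psi}t$, I would compare the resulting Gauss sum with the standard Tate formula for $\eps(\xi,\psi;d_{\psi}t)$. Matching the two produces exactly the ratio $dt/d_{\psi}t$, the inverse $\eps$-factor $\gamma(\xi,\psi)^{-1}$, and the sign $\xi(-1)$; the latter arises from reconciling Tate's convention (which computes $\eps(\xi,\psi)$ via an integral of $\xi^{-1}\psi$) with the convention of the lemma (integrating $\xi\psi$), through the involution $t\mapsto -t$.

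The bulk of the work is not the evaluation but convention-tracking. The paper normalises $\gamma$ via the Weil--Deligne functor with Tate's normalisation of $L$- and $\eps$-factors, while the classical Gauss-sum formulas appear in several slightly incompatible forms in the literature. Careful bookkeeping of (i) the reflection $t\mapsto -t$ producing the $\xi(-1)$, (ii) the passage between $dt$ and the self-dual measure $d_{\psi}t$, and (iii) the switch $\xi\leftrightarrow\xi^{-1}$ between Tate's $\eps$ and the ``raw'' oscillatory integral, is where essentially all the technical attention is required; once these are pinned down, the formula falls out of Step 2 on the dense ramified locus, and then extends to all $\xi$ by meromorphic continuation from Step 1.
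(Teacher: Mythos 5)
Your approach differs genuinely from the paper's. The paper's proof is a two-line application of Tate's local functional equation $Z(\phi,\xi)=\gamma(\xi,\psi)^{-1}Z(\hat\phi,\xi^{-1}|\cdot|)$: after reducing to $dt=d_\psi t$, one simply chooses $\hat\phi$ to be a normalised indicator of $-1+\vpi^{n}\OO$ (a delta approximation at $-1$). As $n\to\infty$, the right-hand side tends to $\gamma(\xi,\psi)^{-1}\xi^{-1}(-1)|{-1}|=\gamma(\xi,\psi)^{-1}\xi(-1)$, while Fourier inversion shows that the left-hand side equals $\int\xi(t)\psi(t)\,d_\psi t$ once $n$ exceeds the conductor of $\xi$. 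This single test-function computation handles ramified and unramified $\xi$ uniformly and produces the sign $\xi(-1)$ and the $\psi$-self-dual measure normalisation in one stroke, with no $\eps$-vs.-$\gamma$ convention-tracking.

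Your Gauss-sum route is a reasonable classical alternative, but there is a genuine gap in Step 1. The reduction ``verify on the dense ramified locus and extend by meromorphic continuation'' does not work: replacing $\xi$ by $\xi|\cdot|^{s}$ never changes $\xi|_{\OO_{E,w}^{\ts}}$, so the ramification type is constant along each connected component of the character variety. The unramified characters form an isolated component, entirely disjoint from all ramified characters, and no amount of analytic continuation in $s$ carries you from one to the other. Meromorphic continuation in $s$ only buys you convergence of the integral within a fixed component; it does not let you transport the verified identity from ramified $\xi$ to unramified $\xi$. Consequently your argument proves the formula only for ramified $\xi$, and the unramified case — where the $L$-factors are nontrivial and the regularised integral is a geometric series, not a Gauss sum — needs a separate direct computation that your write-up omits. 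Adding that computation would close the gap, but the paper's test-function approach avoids the case split altogether.
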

The left-hand side is to be understood in the sense of analytic continuation from characters $\xi|\cd |^{s}$ for $\Re(s)\gg0 $.
\begin{proof} We may fix $d^{}t= d^{}_{\psi}t$. Then the result  follows from the functional equation for $\GL_{1}$ (\cite[(23.4.4)]{bh}):
\beq \lb{feqgl1}
Z(\phi,  \xi) = \gamma(\xi, \psi)^{-1} Z(\hat{\phi}, \xi^{-1}|\ |) ,
\eeq
where for a Schwartz function $\phi$ on $E_{w}$, 
$$Z( \phi, \xi) := \int_{E_{w}^{\ts}} \phi(t) \xi(t) d_{\psi}^{}t , \qquad \hat{\phi}(t):= \int_{E_{w}}\phi(x) \psi(xt) d_{\psi}x.$$
Namely, we  insert in \eqref{feqgl1} the  function $$\hat{\phi}:=\delta_{-1+\vpi_{v}^{n}\OO_{F}}:= \vol(1+{\vpi}_{v}^{n}\OO_{F, v}, d_{\psi }t)^{-1}\one_{-1+{\vpi_{v}}^{n}\OO_{F, v}}, \qquad n\geq 1,$$  approximating   a delta function at $t=-1$. Then by Fourier inversion $\phi(t)=\hat{\hat{\phi}}(-t) = \delta_{1+\vpi_{v}^{n}\OO_{F}}*\psi(t) $, which  if $n$ is sufficiently large (depending  on the conductor of $\xi$)    has the same integral against $\xi$ as $\psi(t)$. 
\end{proof}

\begin{prop} \lb{propcratio}
The ratio  $C(\chi_{p}') $ defined in  \eqref{cratio} is a constant $C\in L$ independent of $\chi'_{p}$. 
\end{prop}
\begin{proof}  By the definition of $e_{p}(V_{(A,\chi')})$ and a comparison of \cite[Lemma A.1.1]{1} with Lemma \ref{int gamma}  applied to $\prod_{w\vert v}\chi'_{w}\alpha_{v}|\cd|_{v}\circ q_{w}$, we have 
$$ C(\chi_{p}')= \prod_{v\vert p }  \gamma (\ad(W_{v}(1)^{++}), \psi_{v})^{-1}\in L.$$
\end{proof}

\subsection{Toric period at $p$}
We compare the toric period at a $p$-adic place with the interpolation factor.  Denote by $P_{v} \subset \GL_{2}(F_{v})$ the upper triangular Borel subgroup.

\begin{lemm} \lb{iw dec} The quotient space $K_{1}^{1}(\vpi^{r'})\bks\GL_{2}({F_{v}}) /P_{v}$ admits the set of representatives 
$$n^{-}(c):=\begin{cases} \smalltwomat 1{}c1& \text{if } c\neq \infty\\
\smalltwomat {}1{-1} {} & \text{if } c=\infty,
\end{cases}\qquad c\in \OO_{F, v}/\vpi^{r'}\OO_{F,v}\cup \{\infty\}.$$
\end{lemm}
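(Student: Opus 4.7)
My plan is to apply the Iwasawa decomposition. Since $\GL_{2}(F_{v}) = \GL_{2}(\OO_{F,v})\cdot P_{v}$, every $(K_{1}^{1}(\vpi_{v}^{r'}),P_{v})$-double coset meets $\GL_{2}(\OO_{F,v})$, and $\GL_{2}(\OO_{F,v})\cap P_{v}$ is the upper-triangular Borel $B(\OO_{F,v})$. The problem thus reduces to describing the orbits of the left $K_{1}^{1}(\vpi_{v}^{r'})$-action on
$$\GL_{2}(\OO_{F,v})/B(\OO_{F,v})\;\cong\;\mathbb{P}^{1}(F_{v}),$$
the identification being $g\mapsto g\cdot[e_{1}]$. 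Under this bijection, $n^{-}(c)P_{v}$ corresponds to $[1:c]$ for $c\in F_{v}$, and $n^{-}(\infty)P_{v}=wP_{v}$ corresponds to $[0:1]$; in particular the proposed representatives surject onto $\mathbb{P}^{1}(F_{v})$.

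Two short computations then finish the argument. First, for $c\in F_{v}$ with $v(c)<0$ one has $c^{-1}\in\vpi_{v}\OO_{F,v}$, so the matrix $\smalltwomat{1}{c^{-1}}{0}{1}$ lies in $K_{1}^{1}(\vpi_{v}^{r'})$ and sends $[0:1]$ to $[c^{-1}:1]=[1:c]$; this handles all $c\notin\OO_{F,v}$, each being equivalent to the representative $n^{-}(\infty)$. Second, for any $c\in\OO_{F,v}$ and any $x\in\OO_{F,v}$, the matrix $\smalltwomat{1}{0}{\vpi_{v}^{r'}x}{1}$ lies in $K_{1}^{1}(\vpi_{v}^{r'})$ and carries $[1:c]$ to $[1:c+\vpi_{v}^{r'}x]$, so the double coset of $n^{-}(c)$ depends only on the class of $c$ in $\OO_{F,v}/\vpi_{v}^{r'}\OO_{F,v}$.

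Combining these two steps, every $(K_{1}^{1}(\vpi_{v}^{r'}),P_{v})$-double coset is represented by some $n^{-}(c)$ with $c\in\OO_{F,v}/\vpi_{v}^{r'}\OO_{F,v}\cup\{\infty\}$, as claimed. The argument is entirely elementary; the only verification really needed is that the two auxiliary matrices above satisfy the congruences cutting out $K_{1}^{1}(\vpi_{v}^{r'})$ -- diagonal entries $\equiv 1$ and lower-left entry $\equiv 0$ modulo $\vpi_{v}^{r'}$, with the upper-right entry unconstrained. I anticipate no substantive obstacle.
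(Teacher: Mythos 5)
Your reduction via the Iwasawa decomposition, together with the two matrix computations, correctly establishes \emph{surjectivity}: every double coset in $K_1^1(\vpi_v^{r'})\backslash\GL_2(F_v)/P_v$ contains some $n^-(c)$ with $c\in \OO_{F,v}/\vpi_v^{r'}\OO_{F,v}\cup\{\infty\}$. Since the paper states the lemma without proof, this is presumably the intended route, and this part of your argument is fine.

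What you do not address --- and what in fact fails --- is that distinct values of $c$ should give distinct double cosets. If $c\in\OO_{F,v}^\times$, then $\smalltwomat{1}{-c^{-1}}{0}{1}\in K_1^1(\vpi_v^{r'})$ carries $[1:c]$ to $[0:1]$; equivalently
$\smalltwomat{1}{-c^{-1}}{0}{1}\, n^-(c) \;=\; n^-(\infty)\, \smalltwomat{-c}{-1}{0}{-c^{-1}}\;\in\; n^-(\infty)P_v.$
Hence all $n^-(c)$ with $c$ a unit, together with $n^-(\infty)$, lie in a \emph{single} double coset, so the listed elements are far from pairwise inequivalent. For instance, for $r'=1$ the double coset space has exactly two elements (represented by $n^-(0)$ and $n^-(\infty)$), whereas the lemma lists $q_{F,v}+1$. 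Read as asserting a bijective system of representatives, the lemma itself is therefore incorrect, and your proof --- which argues only surjectivity and then concludes ``as claimed'' --- leaves this discrepancy unremarked. You should either prove distinctness (which is impossible) or state explicitly that what you are establishing is the weaker assertion that the $n^-(c)$ exhaust the double cosets, with redundancy.
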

\begin{prop}\lb{Qp}  Let $\chi\in \Y^{\rm l.c.}$  be a finite-order character, let $r$ be sufficiently large (that is, satisfying the $v$-component  of  Assumption \ref{assp2}), and let $W_{v}$ be as in $\eqref{Wv}$, $\phi_{v}=\phi_{v,r}$ be as in \eqref{phip}.

Let $\pi_{v}=\sg_{v}$ and  let $(,)_{v}\colon \pi_{v}\ts \pi_{v}^{\vee}\to L$ be a duality pairing satisfying the compatibility of \cite[(5.1.2)]{1} with the local Shimizu lift.\footnote{In \emph{loc. cit.}, the pairing $(,)_{v}$ is denoted by $\mathscr{F}_{v}$.}  Finally, let $Z^{\circ}_{v}(\alpha_{v}, \chi_{v})$ be the interpolation factor of the $p$-adic $L$-function of \cite[Theorem A]{1}.

Then  for all sufficiently large $r'>r$, 
$$ Q_{(, ), v, d^{\circ}t_{v}}(\theta_{v}(W_{v}, \alpha|\cd|_{v}(\vpi_{v})^{-r'_{v}} w_{r',v}^{-1}\phi_{v}),\chi_{v}) = |d|^{2}_{v}|D|_{v}\cd L(1, \eta_{v})^{-1}\cdot Z^{\circ}_{v}(\alpha_{v}, \chi_{v}), $$
where $Q_{(, ), v, d^{\circ}t_{v}}$ uses the measure $d^{\circ}t_{v}= |d|^{-1/2}_{v}|D|_{v}^{-1/2}dt$.
\end{prop}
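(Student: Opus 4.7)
The plan is to unfold all definitions on the left-hand side until the expression becomes a Weil-representation integral that is manifestly local, then match it term-by-term with the formula defining $Z_{v}^{\circ}(\alpha_{v},\chi_{v})$ in \cite[Theorem A]{1}. First I would unwind $Q_{(,)_{v},d^{\circ}t_{v}}$: by its definition \eqref{Qv1}, this is $\cL(\iota V_{(A,\chi),v},0)^{-1}$ times the  integral $\int_{E_{v}^{\ts}/F_{v}^{\ts}}\chi_{v}(t)(\pi_{v}(t)\theta_{v}(W_{v},\al_{v}|\cd|_{v}(\vpi_{v})^{-r'}w_{r'}^{-1}\phi_{v}), f_{2,v})_{v}\, dt$. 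The key move is then to apply the Shimizu compatibility \cite[(5.1.2)]{1} to the pairing $(\pi_{v}(t)\theta_{v}(W_{v},\Phi_{v}),f_{2,v})_{v}$: this collapses the Jacquet--Langlands-side pairing into an explicit toric integral of a Whittaker function against a Weil representation action, namely an expression of the form $\int^{*}(r(g,(t,1))\Phi_{v})(x_{0}) W_{v}(g)\, dg$ for an appropriate $x_{0}\in \B_{v}$, with the insertion of $t$ moving by its left action on the first factor $\bf V_{1,v}=E_{v}$.

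Secondly, I would exploit the product structure $\phi_{v}=\phi_{1,r}\phi_{2,r}$ along ${\bf V}_{v}={\bf V}_{1,v}\oplus{\bf V}_{2,v}$ so that $r_{\psi}$ splits as $r_{1}\op r_{2}$, and the $t$-action only affects the $r_{1}$-factor. The normalising twist by $\alpha_{v}(\vpi_{v})^{-r'}w_{r'}^{-1}$ is chosen precisely so that $w_{r'}W_{v}$ returns the ordinary new vector $\vphi_{v,r}$ of \eqref{vphip} up to the predicted scalar: I would use this to transport the $w_{r'}^{-1}$ from $\phi_{v}$ onto the  Whittaker side, thereby replacing $W_{v}$ by (a scalar times) $\vphi_{v,r}$ whose values in the Kirillov model are $\one_{\OO_{F,v}-\{0\}}|y|_{v}\alpha_{v}(y)$. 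After this normalisation, the integral $\int^{*}\ldots dg$ may be computed by the Iwasawa decomposition of Lemma \ref{iw dec}: the integration over $K_{1}^{1}(\vpi^{r'})$ and the $P_{v}$-part unfold into an integral of $r(n^{-}(c)a(y))\phi_{v}$ at the point $(t,1)$ against $\chi_{v}(t)\alpha_{v}(y)|y|_{v}$.

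At this point  the computation becomes an explicit local exercise with the Weil representation: the action of $n^{-}(c)$ contributes the character $\psi_{v}(cq(x))$, which after integrating in $c$ forces the norm of $x$ into a congruence class; the $\phi_{1,r}$-factor forces $x_{1}\in 1+\vpi^{r}\OO_{E,v}$ and the $\phi_{2,r}$-factor forces $x_{2}\in \rj\OO_{E,v}\cap q^{-1}(-1+\vpi^{r}\OO_{F,v})$; and the $t$-twist on ${\bf V}_{1}$ essentially integrates $\chi_{v}$ over the open subgroup of $E_{v}^{\ts}/F_{v}^{\ts}$ cut out by $\phi_{1,r}$, contributing the relevant volume times $\chi_{v}(1)=1$ (here Assumption \ref{assp2} ensures that $\chi_{v}$ is trivial on the relevant subgroup). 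One then checks that all the powers of $|d|_{v}$, $|D|_{v}$, the factor $L(1,\eta_{v})^{-1}$, and the $\cL$-factor in the denominator of $Q_{v}$ combine to match the explicit form of $Z_{v}^{\circ}(\alpha_{v},\chi_{v})$ recorded in \cite[Proposition 3.6.1]{1}.

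The main obstacle will be the bookkeeping of constants: keeping track of the interplay between the measures $dt_{v}$ and $d^{\circ}t_{v}$, the levels of $\phi_{1,r},\phi_{2,r}$, the normalisation of Haar measures on $K_{1}^{1}(\vpi^{r'})$ and on $F_{v}^{\ts}$, and the precise normalisation of the Kirillov model $\mathscr{K}_{\psi_{v}}$. Once these are aligned, the identification with $Z_{v}^{\circ}$ is essentially forced, since both sides are characterised by the same interpolation property (they are nonzero multiples of the local $L$-factor ratio $\zeta_{F,v}(2)L(1/2,\pi_{E,v}\ot\chi_{v})/L(1,\eta_{v})L(1,\pi_{v},\ad)$ evaluated on the ordinary vector). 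I do not expect to need any new local input beyond the Gross--quasicanonical set-up already used in \S\ref{sec4} and the explicit Kirillov formulas of \cite[\S2.3]{1}.
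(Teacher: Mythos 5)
Your overall strategy (Shimizu lift $\to$ Weil-representation integral, Iwasawa decomposition via Lemma~\ref{iw dec}, explicit computation of the $n^-(c)$-action and support conditions, then bookkeeping of constants) is essentially the same as the paper's, which passes through an auxiliary quantity $Q^{\sharp}$ and reduces the statement via \cite[Lemma A.1.1]{1} to showing
\[
Q^\sharp(\theta(W,\alpha|\cdot|(\vpi)^{-r'}w_{r'}^{-1}\phi),\chi) = |d|^{3/2}|D|\, L(1,\eta)^{-1}\int_{E^\ts}\alpha|\cdot|\circ q(t)\,\chi(t)\,\psi_E(t)\,dt.
\]

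However, the last paragraph contains a genuine gap. You write that ``the identification with $Z_v^\circ$ is essentially forced, since both sides are characterised by the same interpolation property (they are nonzero multiples of the local $L$-factor ratio~$\ldots$)''. That reasoning cannot produce the proposition: the entire content of the statement \emph{is} the exact constant $|d|^2_v|D|_v\,L(1,\eta_v)^{-1}$. Knowing that both sides lie on the same one-dimensional line of ``multiples of the $L$-factor ratio'' gives no information about the scalar. The paper has no choice but to grind through the explicit computation: evaluate $\widehat{\phi}_{2,r}(0)=e^{-1}|d|\vol(q^{-1}(-1+\vpi^r\OO_F)\cap\OO_{\mathbf V_2})$, identify $\vol(P(\vpi^{r'})\backslash K_1^1(\vpi^{r'}))=|\vpi|^{-r'}|d|^{1/2}\zeta_{F,v}(1)^{-1}$, observe that only the $q_F^{r+v(d)}$ classes $c$ with $v(c)\geq r'-r-v(d)$ contribute (for smaller $v(c)$ the $\xi_2$-integral vanishes, so one is not ``integrating in $c$''~--- one is summing a finite set, most of whose terms are zero), and combine. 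Your proposal defers exactly this step, which is where all the work lies.

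A second, smaller issue: the appeal to Gross's quasicanonical lifting theory is out of place. Proposition~\ref{Qp} is a purely local representation-theoretic identity about toric periods, Kirillov models and the Weil representation; quasicanonical liftings enter only in the geometric argument of \S\ref{sec4}. Also, the remark that one ``replaces $W_v$ by (a scalar times) $\vphi_{v,r}$ whose values in the Kirillov model are $\one_{\OO_{F,v}-\{0\}}|y|_v\alpha_v(y)$'' reverses the roles: $W_v$ is the Kirillov function with those values, and $\vphi_{v,r}=\mathscr K_{\psi_v}^{-1}(\alpha_v(\vpi_v)^{-r}w_rW_v)$; the paper's proof works directly with $W$ and never makes this substitution.

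To repair the argument you must replace the final ``forced by interpolation'' step with the explicit local integral evaluations just indicated.
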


\begin{proof}
We drop all subscripts $v$, and assume as usual that  $\psi$ is our fixed character of level $d^{-1}$. Let \begin{align}\label{Qsharp}
 Q_{(, )}^{\sharp}(f_{1}, f_{2}, \chi_{v})=
\int_{E^{\times}/F^{\times}} \chi(t) (\pi(t)f_{1}, f_{2})\,  {dt},
\end{align}
where $dt$ is the usual  Haar measure on $E^{\ts}/F^{\ts}$, giving volume $|d|^{1/2}|D|^{1/2}$ to $\OO_{E}^{\ts}/\OO_{F}^{\ts}$.

By the definitions  and  \cite[Lemma  A.1.1]{1} (which expresses  $Z^{\circ}$ as a normalised integral),   it  suffices to show that  
$$Q^{\sharp}(\theta_{}(W_{}, \alpha|\cd|(\vpi)^{-r'_{}}_{} w_{r'}^{-1}\phi_{}),\chi_{}) = 
|d|^{3/2}|D| \cd L(1, \eta_{})^{-1}\cdot \int_{E_{}^{\ts}} \alpha|\cd |\circ q(t) \chi(t)\psi_{E}(t) \, dt. $$

By  \cite[Lemma 5.1.1]{1} (which spells out a consequence of the normalisation of the local Shimizu lifting) and Lemma \ref{iw dec}
we can write 
$$Q^{\sharp}:= Q_{}(\theta_{}(W_{}, \alpha_{}|\cd|_{}(\vpi_{})^{-r'} w_{r'}^{-1}\phi_{r}),\chi_{v})= \sum_{c\in{\bf P}^{1}( \OO_{F}/\vpi^{r'} )} Q^{\sharp \, (c)} $$
where for each $c$, 
\begin{multline*} 
Q^{\sharp\,  (c)}_{}:=|d|^{-3/2}\cd\alpha|\ |(\vpi)^{-r}\int_{F^{\times}} W(\smallmat y{}{}1  n^{-}(c))\\
 \int_{T(F)}\chi(t) \int_{P(\vpi^{r'})\bks K_{1}^{1}(\vpi^{r'})} |y| r( n^{-}(c)   kw_{r}^{-1})\phi(yt^{-1}, y^{-1}q(t))
dk\, d^{}t \, {d^{\times }y\over |y|}.
\end{multline*}
Here $P(\vpi^{r'}) =P \cap K_{1}^{1}(\vpi^{r'})$.

It is easy to see that $Q^{\sharp (\infty)}=0$ (observe that $\phi_{2,r}(0)=0$). For $c\neq \infty$, we have $$n^{-}(c) w_{r'}^{-1 } = w_{r'}^{-1} \twomat 1 {-c\vpi^{-r'}} {}1, $$
and when $x=(x_{1}, x_{2})$ with $x_{2}=0$:
\begin{align*}
r(n^{-}(c )w_{r'}^{-1})\phi(x,u)&=
 \int_{\bf V}
  \psi_{E} (u x_{1}\xi_{1})      \psi(-uc q(\xi)) \phi_{r}(\xi, \vpi^{r'}u)  d\, \xi 
\end{align*}
On the  support of the integrand we have $v(u)=\vpi^{-r'}$ and $v(q(\xi))\geq r$, by the definition of $\phi_{r}$. If $v(c)< r'-r-v(d)$, the integration in $d\xi_{2}$ gives $0$;
hence  $Q^{\sharp (c)}=0$ in that case.

Suppose from now on that  $v(c)\geq r'-r-v(d)$. Then $\psi(-ucq(\xi))=1$ and 
$$r(n^{-}(c )w_{r'}^{-1})\phi(x,u)
  =  |d|^{3}|D|^{} L(1, \eta)^{-1} |\vpi|^{r}\psi_{E, r}(\vpi^{-r'}x_{1}) \delta_{1,U_{F},r}(\vpi^{r}u).$$
where $\psi_{E,r}:=\vol(\OO_{E})^{-1}\cd\delta_{1,U_{T,r}}*\psi_{E}$, and we have noted  that $\hat\phi_{2, r}(0)= e^{-1}|d| \cd\vol(q^{-1}(-1+ \vpi^{r}\OO_{F})\cap \OO_{{\bf V}_{2}})=|\vpi|^{r} |d|^{3}|D|^{1/2} L(1, \eta)^{-1}$.

If $r'$ is sufficiently large, the Whittaker function $W$ is invariant under  $n^{-}(c)$. Then 
\begin{multline*}
Q^{\sharp\, (c)}= 
 |d|^{3/2}|D| L(1, \eta)^{-1}\cd  |\vpi|^{r-r'}  \alpha(\vpi)^{-r'} \cd  |d| ^{1/2}\zeta_{F,v}(1)^{-1}
\\
\cdot
\int_{F^{\times}} W(\smallmat y{}{}1) 
 \int_{T(F)}\chi( t) 
 \psi_{E, r}(\vpi^{-r'}x_{1}) \delta_{1,U_{F,r}}(\vpi^{r'}y^{-1}q(t))
dt
  d^{\times}y,
\end{multline*}
where $|\vpi|^{-r'}|d|^{1/2}\zeta_{F,v}(1)^{-1}$ appears as $  \vol(P(\vpi^{r'})\bks K_{1}^{1}(\vpi^{r'}))$.

Integrating in $d^{\ts}y$ and summing the above over the $q_{F}^{r+v(d)}=|d|^{-1}|\vpi|^{-r}$ contributing values of $c$, we find 
$$Q^{\sharp}= |d|^{3/2}|D| \cd L(1, \eta)^{-1}\cd \int_{E^{\ts}}\chi(t)\alpha|\ |\circ q(t) \psi_{E}(t)\, dt,$$
as desired.
\end{proof}

\begin{coro}
\lb{non exc B}
If $\chi_{p}$ is not exceptional (that is, $e_{p}(V_{(A, \chi)})\neq 0$), then  the  quaternion algebra ${\bf B}$ over $\A$ satisfying $ \H(\pi_{ \B},\chi)\neq 0$ is indefinite at all primes $v\vert p$. 
\end{coro}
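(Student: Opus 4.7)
The plan is to exploit the fact that Proposition \ref{Qp} is proved under the standing choice ${\bf B}_v = M_2(F_v)$ at each $v\mid p$ (made in the paragraph preceding \eqref{phip1}) and produces an \emph{explicit} formula for the local period $Q_v$ on the split quaternion algebra. Coupled with the Tunnell--Saito local dichotomy and the uniqueness of ${\bf B}$, non-vanishing of $Q_v$ at a place $v\mid p$ will force ${\bf B}_v$ itself to be split.

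Concretely, I would argue by contradiction. Fix $v\mid p$ and suppose ${\bf B}_v$ is the ramified quaternion algebra over $F_v$. Since $\dim_{L(\chi)} \H(\pi_{\bf B},\chi) = 1$ factorizes as a product of local Hom spaces, and the unique ${\bf B}$ realizing $\H(\pi_{\bf B},\chi) \neq 0$ is determined place-by-place by Tunnell--Saito, this would force $\H(\pi_v^{\mathrm{spl}}, \chi_v) = 0$, where $\pi_v^{\mathrm{spl}}$ denotes the Jacquet--Langlands transfer of $\pi_v$ to $\GL_2(F_v)$. In particular every $E_v^\times$-invariant functional on $\pi_v^{\mathrm{spl}}\otimes \pi_v^{\vee,\mathrm{spl}}\otimes \chi_v$, including the one built in Proposition \ref{Qp}, would have to vanish identically.

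On the other hand, Proposition \ref{Qp} supplies the identity
$$Q_{(,),v,d^\circ t_v}(\theta_v(W_v,\ldots),\chi_v) \;=\; |d|_v^2 |D|_v \cdot L(1,\eta_v)^{-1} \cdot Z^\circ_v(\alpha_v,\chi_v),$$
whose prefactor is a nonzero scalar. It remains to deduce from the hypothesis $e_p(V_{(A,\chi)}) \neq 0$ that $Z^\circ_v(\alpha_v,\chi_v) \neq 0$ for each individual $v\mid p$. This falls out of the proof of Proposition \ref{propcratio}, which exhibits
$$\frac{e_p(V_{(A,\chi')},\psi_p)}{\prod_{v'\mid p} Z^\circ_{v'}(\chi'_{v'},\psi_{v'})} \;=\; \prod_{v'\mid p} \frac{C'_{v'}(W^+_{v'},\psi_{v'})}{\gamma(\mathrm{ad}(W_{v'}(1))^{++},\psi_{v'})}$$
as a product of local gamma-factor ratios. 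By Lemma \ref{www} each numerator $C'_{v'}$ is, up to an overall unit, an epsilon factor of $\eta_{v'}$ and hence nonzero; each denominator is the gamma factor of a single tempered one-dimensional character, also nonzero. Hence the factorization of $e_p$ is non-degenerate, and $e_p(V_{(A,\chi)}) \neq 0$ forces each $Z^\circ_v(\alpha_v,\chi_v)$ to be nonzero, producing the promised contradiction with the vanishing of $Q_v$ on the split side.

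The main conceptual point requiring care is precisely this last one: verifying that the per-place factorization of $e_p$ really is a product of individually nonzero contributions, so that a zero of one local factor cannot be cancelled by a pole of another. Once this is secured, Tunnell--Saito finishes the argument place-by-place at every $v\mid p$.
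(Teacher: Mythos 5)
Your argument is correct and is essentially the paper's own, which compresses it to one sentence: ``By Propositions~\ref{Qp} and~\ref{propcratio}, if $\chi_p$ is not exceptional, then for all $v\mid p$ the functional $Q_v\in \H(\pi_{M_2(F_v)},\chi_v)\ot\H(\pi_{M_2(F_v)}^\vee,\chi_v^{-1})$ is not identically zero.'' You have unpacked that sentence correctly: Tunnell--Saito dichotomy plus the explicit nonvanishing from Proposition~\ref{Qp} (on the split algebra) plus the deduction, via Proposition~\ref{propcratio}, that $e_p\neq 0$ forces each $Z^\circ_v\neq 0$. The one place where your write-up is slightly heavier than necessary is the ``non-degeneracy of the factorization'': you do not actually need to verify by hand that each ratio $C'_v/\gamma(\ad(W_v)(1)^{++})$ is a nonzero finite scalar, since Proposition~\ref{propcratio} already asserts that the product $C$ lies in $\OO(\Psi_p,\dots)$, i.e.\ is a \emph{regular} function; this finiteness together with $e_p = C\cdot\prod_{v\mid p}Z^\circ_v\neq 0$ immediately forces each factor $Z^\circ_v$ of the finite product to be nonzero, which is all that is used.
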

\begin{proof} By Propositions \ref{Qp} and \ref{propcratio}, if $\chi_{p}$ is not exceptional, then for all $v\vert p$ the  functional $Q_{v}\in \H(\pi_{M_{2}(F_{v})}, \chi_{v})\ot  \H(\pi_{M_{2}(F_{v})}^{\vee}, \chi_{v}^{-1})$ is not identically zero. 
\end{proof}

\section{Errata to \cite{1}} \lb{app err}
The salient mistakes are the following: the statement of the main theorem is off by a factor of $2$; the proof given needs a further assumption, (no stronger than) that $V_{\frakp}A$ is potentially crystalline at all $v\vert p$ (however the theorem still holds true without the assumption, cf. Remark \ref{rmk remove}); and the  Schwartz function $\phi_{2, p} $  given by the local  Siegel--Weil formula at $p$ needs to be \emph{different} from the Schwartz function used to construct the Eisenstein family. 

References  in \emph{italics}  are directed  to \cite{1}, references in straight letters to the present paper.
\begin{itemize}
\item \emph{Theorem A}.  
It should be 
$L_{p, \alpha}(\sigma_{E})\in \OO(\Y')^{\rm b}$
 (with the interpolation property  being correct for the choice of additive character $\psi_{p}$ as  in Theorem \ref{A}). For a correct discussion of the ring of rationality of $L_{p, \alpha}(\sigma_{E})$, within the context of a  generalised construction, see \cite[Corollary 4.5.4]{dd-pLf}.
\item \emph{Theorem B}. The constant factor should be $c_{E}$ and not $c_{E}/2$ (the latter is, according to \emph{(1.1.3)}, the constant factor of the Gross--Zagier formula in archimedean coefficients).\footnote{The heuristic reason for the difference is that the direct analogue of $s\mapsto L(1/2+s, \sigma_{E}, \chi)$ is $\chi_{F}\mapsto L_{p}(\sg_{E})(\chi\cd \chi_{F}\circ N_{E/F})$, whose derivative at $\chi_{F}=\one$ is twice our ${\rm d}_{F}L_{p}(\sg_{E})(\chi)$, as  the tangent map to  $\chi_{F}\mapsto \chi'=\chi\cd \chi_{F}\circ N_{E/F}\mapsto \omega^{-1}\cd \chi'_{|\A^{\ts}}$ is multiplication by~$2$.}
The mistake is introduced in the proof of \emph{Proposition 5.4.3}, see below.

The proof works under the further  assumption that $V_{\frakp}A$ is potentially crystalline at all $v\vert p$, see the correction to \emph{Proposition 9.2.1}.
\item \emph{Theorem C}. Similarly, the constant factor should be $c_{E}/2$ in part 3,  and $c_{E}$ in part 4.
\item \emph{\S2.1}. The space of $p$-adic modular forms is the closure of $M_{2}(K^{p}K_{1}^{1}(p^{\infty})_{p})$, not  $M_{2}(K^{p}K^{1}(p^{\infty})_{p})$.
\item \emph{Proposition 2.4.4.1}. The multiplier in \emph{equation (2.4.3)}  should be $\alpha|\cd |(\vpi^{-r})=\prod_{v\vert p } \alpha_{v}|\cd |_{v}(\vpi_{v}^{-r})$, and the statement holds for forms in $M_{2}(K^{p}K_{1}^{1}(p^{r})_{p})$.  Similarly, the definition of $R_{r,v}^{\circ}$ in \emph{Proposition 3.5.1} should have an extra $|\vpi_{v}|^{-r}$.  The result of \emph{Proposition A.2.2}, as modified below, holds true for this definition of $R_{r, v}^{\circ}$ (in \emph{loc. cit.}, a complementary mistake appears between the third-last and second-last displayed equations in the proof).
\item \emph{Lemma 3.2.2}. A factor  $\eta(y)$ is missing in the right-hand side of the formula.
\item  \emph{Proposition 3.2.3.2} should be  corrected as follows: \emph{Let $v\vert p$ and let $\phi_{2,v}=\phi_{2,v}^{\circ}$  be as in \eqref{phip} \emph{(of the present paper)}. Then 
$$ W_{a,r, v}^{\circ}(1, u, \chi_{F})=
\begin{cases} |d_{v}|^{3/2}|D_{v}|^{1/2}  \chi_{F,v}(-1)  & \textrm{\ if \ } v(a)\geq 0 \textrm{\ and \ } v(u)=0 \\  
 0 & \textrm{\ otherwise.} 
\end{cases}$$}
\item \emph{Equation (3.7.1)}: the right-hand side should have an extra factor of  $\prod_{v\vert p}|d|^{2}_{v}|D|_{v}$ owing to the correction to \emph{Proposition A.2.2}.
\item  \emph{Lemma 5.3.1}: the left-hand side of the last equation in the statement should be $\la f_{1}'(P_{1}), f_{2}'(P_{2})\ra_{J, *}$. 
\item \emph{{Proof of Proposition 5.4.3}}. The second-last displayed equation should have the factor of $2$ on the right hand side, not the left hand side:
$$2 \lf(\tZ(\phi^{\infty}, \chi))= |D_{F}|^{1/2} |D_{E}|^{1/2}    L(1, \eta)\langle {\rm T}_{\rm alg, \iota_{\frakp}}(\theta_{\iota_{\frakp}}(\vphi,\alpha(\vpi)^{-r}w_{r}^{-1}\phi)) P_{\chi}, P_{\chi}^{-1}\rangle.$$
Then the argument shows that, first, 
$$\langle {\rm T}_{\rm alg, \iota_{\frakp}}(f_{1}\otimes f_{2}) P_{\chi}, P_{{\chi}^{-1}}\rangle_{J}
={      \zeta_{F}^{\infty}(2)\over  (\pi^{2}/2)^{[F:\Q]}  |D_{E}|^{1/2}  L(1, \eta)} 
\prod_{v|p}Z_{v}^{\circ}( \alpha_{v}, \chi_{v})^{-1} 
\cdot{\rm d}_{F} L_{p, \alpha}(\sigma_{A,E})( \chi)\cdot Q(f_{1}, f_{2}, \chi)$$
(without an incorrect factor of $2$ introduced in the denominator of the right-hand side of (5.4.1) there);  second, that the above equation is equivalent to \emph{Theorem B} as corrected  above. 

An extra factor  $\prod_{v\vert p} |d|^{2}_{v}|D|_{v}$ should be inserted in the right-hand sides of the last and fourth-last displayed equation, cf. the corrections to \emph{(3.7.1)}, \emph{Proposition A.3.1}.
\item \emph{Proposition 7.1.1(b)}: should be replaced by Proposition \ref{6.1-2-3}(b-c).
\item  \emph{\S7.2, third paragraph}: the coefficient in the second displayed equation should have $|D_{E}|^{1/2}$, not $|D_{E/F}|^{1/2}$, in the denominator.
\item  \emph{Lemma 9.1.1}  is corrected by Lemma \ref{U on Z}  (this does not significantly affect the rest).
\item  \emph{Lemma 9.1.5}: the extension $H_{\infty}$ is \emph{contained} in a relative Lubin--Tate extension. This is the only property used. 
\item \emph{Proposition 9.2.1}. The assumption that $V_{\frakp}A$ is potentially crystalline at all $v\vert p$ should be added. The bounded dependence on $s$ of the integer $d_{2}=d_{2,s}$ was not addressed; it holds true by the proofs of Proposition 4.4.1 and  Lemma 4.4.2, which work verbatim in the split case (under the comparison given in footnote 8 of \S~\ref{dec lh}).
 The definition of $d_{1,s}$ contains  an extra $\ot_{\OO_{L}} L/\OO_{L}$. 
\item{\emph{Lemma 9.2.4}}: the statement should be that $H^{0}(\wtil{H}'_{\infty,\baar{w}}, V_{p}J_{U}^{*}(1))$ vanishes, and it  this group that should appear in the left-hand side of the first displayed equation in the proof.
\item  \emph{Lemma A.2.1}: the list of  representatives is missing the element $n^{-}(\infty)=\smalltwomat 01{-1}0$, cf. Lemma \ref{iw dec}. 
\item \emph{Proposition A.2.2}:  the statement should be 
$$R_{r,v}^{\natural}(W_{v}, \phi_{v}, \chi_{v}'{}^{\iota}) = |d|^{2}_{v}|D|_{v}\cd Z_{v}^{\circ}( \chi'_{v}):=    |d|^{2}_{v}|D|_{v}\cd
 {\zeta_{F,v}(2)L(1, \eta_{v})^{2}   \over L(1/2, \sigma_{E,v}^{\iota}, \chi'_{v}{})} 
  {\prod_{w\vert v}Z_{w}(  \chi_{w}' )}.$$ 
  The factor $|d|^{2}_{v}|D|_{v}$ missing from \cite{1} should first appear in the right-hand side of the displayed formula for $r(w_{r}^{-1})\phi(x, u)$ in the middle of the proof.
\item \emph{Proposition A.3.1} should be replaced by Proposition \ref{Qp}.
\end{itemize}

 \section{Correction}

The approximation argument used to prove the decay of intersection multiplicities is flawed. In this correction, we give  an alternative argument in a similar spirit, based on an explicit form of the approximation that we deduce from \cite{equi}. This argument requires some bounds on the ramification, so that the main theorem is  weakened.  

Referring to the paragraph \emph{The nonsplit case} in \S~1.1, our general approximation result involved \emph{local} arithmetic intersections, and so it does not imply the vanishing of \emph{global} intersections with flat divisors in a proper local integral model. Instead, we revisit an idea of Perrin-Riou and apply an operator ``$U_{p}-1$''. Since this acts as a difference operator on the Fourier coefficients of our generating series,  we obtain the vanishing (up to multiples of $p^{s}$) once we prove, by inspection, that the relevant sequences of approximating vertical components are constant in the index~$s$.

I would like to thank Wei Zhang for pointing out the mistake.

\subsection{Corrected statement}  \lb{stat c}
We denote by $S_{p,{\rm ns}}$ the set of places of $F$ above~$p$ that are nonsplit in $E$. In Theorem~B,  the assumption that \emph{$\chi_{p}$ is sufficiently ramified} should be replaced by the assumption: $$\text{\emph{for each $v\in S_{p, {\rm ns}}$, $v$~is inert and  $\chi_{v}$ is unramified}}.$$

\begin{rema} \lb{rmk anti}
It should be possible to prove the theorem also (at least)  in the case where at some nonsplit places $v\vert p$, the representation $\pi_{v}$ is unramified and $\chi_{v}$ is arbitrary. While in principle not more difficult than the case treated here, this case would require introducing a larger number of changes in the setup, making for a cumbersome text. We thus prefer to defer  it  to a future work under a  different global approach.
\end{rema}

\subsection{The mistake} It occurs in Proposition 4.3.3, whose proof (with notation as in \emph{loc. cit.}) correctly shows that 
\beq\lb{int for C} (\baar{z}\cd D) = 
c[\kappa(y):\kappa]  q_{F,v}^{s}+ \rho (V\cdot D')_{y}.\eeq
The term $(V\cdot D')_{y}$ is a local intersection multiplicity at $y$, and it is not necessarily equal to the global intersection $(V\bullet D)$ on $\X$. Therefore, corresponding  terms in the formula displayed in the proof of Corollary \ref{cococo} do not necessarily vanish, as the definition of flat extensions invoked in that proof only applies to global intersection pairings.

\subsection{Correction}
We explain the strategy to prove the statement under the hypotheses of \S~\ref{stat c}.

\subsubsection{Setup} 
We discard Assumption 3.4.1 on $\chi_{p}$; as in the corrected statement, we assume instead that $v$ is inert and $\chi_{v}$ is unramified for all  $v\in S_{p, {\rm ns}}$. 
We suppose that $(\phi, U)$ satisfy the assumptions of [I, \S~ 6.1] as well as Assumption 3.4.2, and the following extra assumption. Let $T_{\iota_{\frakp}}(\sigma^{\vee})$ be a spherical $\sigma^{\vee}$-idempotent as in [I, Proposition 2.4.4], which we may take to be of degree zero; by [Ram], we may and do assume that $T_{\iota_{\frakp}}(\sigma^{\vee})$ is supported at \emph{split} places of $F$ where all the data is unramified.  
\begin{enonce}[remark]{Assumption} \lb{ass-idp} 
We have 
\beq\phi=T_{\iota_{\frakp}}(\sigma^{\vee}) \phi^{\flat}\eeq
for some $\phi^{\flat}$ satisfying the assumptions of [I, \S~ 6.1].
\end{enonce}
This assumption will have the same effect as Assumption 3.4.1, namely it ensures that the geometric kernel can be written in terms of height pairings of degree-zero divisors. 

Denote by ${\rm T}(\sigma^{\vee}) $ the Hecke correspondence on $X_{U}$ attached to $T_{\iota_{\frakp}}(\sigma^{\vee}) $ via [I, Lemma 5.2.2]; it has degree zero.  Then by the definitions and [I, Lemma 5.2.2]
 $${}^{\qqq}\tZ(\phi^{\infty}, \chi)_{U}=\langle {}^{\qqq}\tZ_{*}(\phi^{\flat, \infty})1, {\rm T}(\sigma^{\vee}) t_{\chi}\rangle,$$
 and in $\baar{\bf S}{}'$ we have the decomposition
$${}^{\qqq} \tZ( \phi^{\infty}, \chi) =\sum_{v}  \tZ( \phi^{\infty}, \chi)(v)$$
where 
\beq \lb{exp ww}
\tZ( \phi^{\infty}, \chi)(v)=\sum_{w\vert v} \langle {}^{\qqq}
\tZ_{*}(\phi^{\flat, \infty})1, {\rm T}(\sigma^{\vee})^{\rm t} t_{\chi}\rangle_{\ell,w}.\eeq
For $w\nmid p$, we may  move the correspondence  ${\rm T}(\sigma^{\vee})^{\rm t}$ back to the left entry by interpreting the resulting pairing similarly to \cite{yzz}. Namely,    the local height pairing of two degree-zero divisors $D_{1}$, $D_{2}$ on $X$ is, up to a factor $\ell(\vpi_{w})$, the intersection multiplicity of flat extensions of $D_{1}$, $D_{2}$ to an integral model  (see [I, Proposition 4.2.2]). In turn, this  arithmetic intersection pairing extends to divisors of arbitrary degree with disjoint supports by considering \emph{$\widehat{\xi}$-admissible} (rather than flat) extensions as in \cite[\S~7.1]{yzz}. As a result, the pairing 
$$\langle {}^{\qqq}\tZ_{*}(\phi^{ \infty})1,  t_{\chi}\rangle_{\ell,w}$$
is well-defined and it equals the $w$-term in \eqref{exp ww}. The fact that $t_{\chi}$ may not have degree zero introduces a term given by pairing with the Hodge class $\widehat{\xi}$, which however vanishes under our assumptions as in \cite[Proposition 7.3.3]{yzz}. Thus the expression of [I, (8.2.1)]    for $\tZ( \phi^{\infty}, \chi)(v)$ is still valid and  Theorem 3.6.1 continues to hold under our assumptions. 

Theorem \ref{MT} is therefore still reduced   to   Proposition \ref{kill lp}. For each nonsplit $v\vert p$,  fix $m=m_{v}\geq r$  which is a multiple of the order of $\vpi_{v}$ in the set \eqref{ss pts} below. 
Define an operator $\cR_{v}:= \Up_{v, *}^{m_{v}}-1$.  We will prove the following.

 \begin{prop}\label{is crit corr}  Let $v\vert p$. Under our running assumptions, the element
$$\cR_{v}\tZ(\phi^{\infty}, \chi)(v)\in \baar{\bf S}{}'$$
   is $v$-critical in the sense of \eqref{defcrit}.
 \end{prop}
Since $\lf\circ \cR_{v}= (\alpha_{v}^{m}-1)\lf$, and $\alpha_{v}^{m}-1\neq0$ by our assumptions, the proposition still implies Proposition \ref{kill lp}.

\subsection{Decay of intersection multiplicities}
We prove Proposition \ref{is crit corr}. We fix an inert place $v$ of $F$, and denote by $w$ its extension to $E$.

Given our assumption that $\chi_{v}$ is unramified, we consider the action of $\OO_{E,w}^{\ts}$ on CM points; for the set $\Xi(\vpi_{v}^{r})_{a}$ of  Lemma 4.1.3, we have
$$[\Xi(\vpi_{v}^{r})_{a}U_{F, v}^{\circ}]_{U}=  {\rm rec}_{E_{w}}(\OO_{E,w}^{\ts}/
\OO_{F,v}^{\ts}(1+\vpi_{v}^{r+s}\OO_{E,w})) [x(\baar{b}_{a})]_{U},$$
where the Galois action is faithful, and  $\baar{b}_{a}$ 
is any element of
\beq \lb{theset C}
q_{v}^{-1}( 1-a(1+\vpi_{v}^{r}\OO_{F,v})) / (1+\vpi_{v}^{r+s}\OO_{E,w})
\eeq
In fact, let $\sqrt{\ {} }$ be the principal square root defined in a neighbourhood of $1\in \OO_{F, v}$. Then, if $v(a)\geq1$ (or $v(a)\geq 2 $ if $v\vert 2$),  we may and do fix  $\baar{b}_{a}$ to be the class of
$${b}_{a}:=[\sqrt{1-a}].$$

Correspondingly, we define
$$H_{00}$$ to
 be the finite abelian extension of $E$ with norm group $U_{F}^{\circ} U_{T}^{v}\OO_{E,v}^{\ts}$. It is contained in the extension $H_{0}$  defined before Proposition 4.1.4, and it is unramified at $w$. The study of intersection multiplicities of \S~4.3 then needs to take place in $\X$, the base change to $H_{00, \baar{w}}$ of the integral model $\X^{\natural}/\OO_{F_{v}}$ of $X_{U}$  defined by Carayol  (we are renewing the notation: the model $\X$ considered in \S~4 is no longer in use). Note that under our assumption, $H_{00, \baar{w}/F_{v}}$ is unramified, so that $\X$ is still regular. 

Consider Proposition 4.3.3. As noted above, its statement need  to be corrected by replacing (4.3.2)  by 
\beq\lb{int for C} (\baar{z}\cd D) = 
c[\kappa(y):\kappa]  q_{F,v}^{s}+ \rho (V\cdot  D')_{\X_{y}},
\eeq
where $V=V(z)$, $\rho=\rho(z)$.  (The proof goes through verbatim in our renewed setup.)
 
 The following is the new ingredient needed.
\begin{prop} \lb{up-1}
The sequence 
$$(V_{s}, \rho_{s}):=\left((V,\rho)([x(\baar{b}_{a\vpi_{v}^{ms}})]_{U})\right)_{s\in \N}$$
is eventually constant.
\end{prop}
\begin{proof}
We first consider $V_{s}$. By construction, it is the irreducible component of $\X_{\kappa}$ maximising the intersection multiplicity  with the closure of the image $z_{s}\in X_{H_{00, \baar{w}}}$ of $[x(\baar{b}_{a\vpi_{v}^{ms}})]_{U}$ . Here $\kappa $ is the residue field of $H_{00, \baar{w}}$. However, the irreducible components of $\X_{\kappa}$ are already defined over the residue field $\kappa^{\natural}$ of $F_{v}$. Therefore $V_{s}$ is the base-change of the component 
${V}_{s}^{\natural}\subset \X_{\kappa^{\natural}}^{\natural}$
  maximising the intersection multiplicity with the closure of the image $z_{s}^{\natural}\in X_{F_{v}}$ of $z_{s}$.

We explicitly compute $V_{s}^{\natural}$ in terms of the (equivalent) notions of geometric and algebraic \emph{basins} of irreducible components introduced in \cite{equi}. In fact, $V_{s}^{\natural}$ is, essentially by definition, the component through $y$ to whose (geometric) basin the point $z_{s}^{\natural}$ belongs.
First, recall from \cite{carayol, equi} that: 
\begin{itemize}
\item the supersingular points in $\X_{\kappa}$ are parametrised by 
\beq\lb{ss pts}
B(v)^{\ts}\bks \B^{v\infty, \ts}\ts F_{v}^{\ts}/(U^{v}\ts q(U_{v}));\eeq
\item the irreducible components of $\X_{\kappa^{\natural}}$ are parametrised by $(\OO_{F_{v}}/\vpi_{v}^{r}\OO_{F_{v}})$-lines $L\subset (\vpi_{v}^{-r}\OO_{F_{v}}/\OO_{F_{v}})^{2}$;
\item to a CM-by-$E$ point  $z\in X_{F_{v}}$ with sufficiently large conductor is attached an $F_{v}$-isomorphism  $\tau\colon E_{w}\to F_{v}^{2}$, normalised so that 
\beq\lb{n tau}\OO_{F,v}^{2}\subset \tau(\OO_{E,w})\not\supset \vpi_{v}^{-1}\OO_{F, v}^{2},\eeq
and a corresponding line $L(\tau)= [\tau(\OO_{E_{w}})]\subset(\vpi_{v}^{-r}\OO_{F_{v}}/\OO_{F_{v}})^{2}$. 
\end{itemize}
Then in order to show the eventual constancy of $V_{s}^{\natural}$ we need to show that, for $y_{s}$ the reduction of $z_{s}$ and  $\tau_{s}$ the invariant attached to $z_{s}^{\natural}$, we have $y_{s+1}=y_{s}$ and  $L(\tau_{s+1})=L(\tau_{s})$ (for any sufficiently large $s$). 

We have $[x(\baar{b}_{a\vpi_{v}^{m(s+1)}})]_{U}= [x(\baar{b}_{a\vpi_{v}^{ms}}) h)]_{U}$ where, setting $b_{s}:= b_{a\vpi_{v}^{ms}}$,
$$h= (1+{\rm j}  b_{s})^{-1}(1+{\rm j} b_{s+1})= 
{1\over a\vpi_{v}^{ms}}  \twomat {(1-b_{s})(1+b_{s})}     {[b_{s+1}(1-b_{s}) - b_{s}(1-b_{s+1})] {\rm T}} 
{} {(1+b_{s}) (1-b_{s+1})} \quad \in B_{v}^{\ts}=\GL_{2}(F_{v}). 
 $$
The group $B_{v}^{\ts}$ acts on the supersingular points via the map to  the group \eqref{ss pts} induced by the reduced norm $q$. By construction, $q(h)=\vpi_{v}^{m}$ has trivial image there, thus $y_{s+1}=y_{s}$. 

We have $b_{s}=1-2^{-1}a\vpi_{v}^{ms} +O(\vpi_{v}^{ms})$, so that 
$$h \equiv \twomat 1 {{\rm T}/2} {} {0} \pmod{ \vpi_{v}^{m}}.$$ By the construction in \cite[(1.2.1)]{equi}, the group $B_{v}^{\ts}$ acts on the invariant $F_{v}^{\ts}\tau$ via left multiplication by $h^{\rm t}$. Recalling the normalisation \eqref{n tau}, 
we then have 
$$L(\tau_{s+1})= L\left( \twomat c {}{c{\rm T}/2} {0} \tau_{s}\right),$$ where $c\in F_{v}^{\ts}$ is such that the matrix is integral and not divisible by $\vpi_{v}$. But this line is just the one spanned by ${1 \choose 0}$ (note that $\tau_{s}$, as a surjective map to $F_{v}^{2}$, cannot be annihilated by a nonzero matrix over $F_{v}$). Thus $V_{s}$ is constant for $s\geq2$.

We now show the eventual constancy of $\rho_{s}$. In fact, for large enough $s$ we have $\rho_{s} = { (\baar{z}_{s}\cdot \Delta) / (V_{s} \cdot \Delta)}$ for any  divisor $\Delta$ whose support does not contain $V_{s}$.  We take $\Delta={\rm q}^{*}\Delta_{0}$, where $\X_{0}$ is as in the beginning of \S~\ref{43}, ${\rm q}\colon \X \to \X_{0, \OO_{H_{00}, \baar{w}}}$ is the projection, and $\Delta_{0}$ is the  Zariski closure of the canonical lift of $y:=y_{s}=y_{s+1}$. By the projection formula, and  with the notation of  the proof of Lemma \ref{int D}, 
 the intersection $(\baar{z}_{s}\cdot \Delta)$ is a constant multiple of 
\beq\lb{inttt}
(\baar{{\rm q}(z_{s})}\cdot \Delta_{0})_{\X_{0, \OO_{E, \baar{w}}^{\rm un}}, y}=\dim_{k} \OO_{E, \baar{w}}^{\rm un}\llb u\rrb / (\nu_{s}, u).\eeq
Here, by \cite{gross}, the local defining  equation of   the canonical lift $ \Delta_{0}$ is $u=0$, and for the quasicanonical lift $\baar{{\rm q}(z_{s})}$ it is  $\nu_{s}(u)=0$ for an Eisenstein polynomial $\nu_{s}$.
Thus \eqref{inttt} equals~$1$ independently of $s$. This completes the proof.
\end{proof}
The following replaces Corollary \ref{cococo}.

\begin{coro}\lb{cococo corr} If $D\in \Div^{0}(X_{H_{0}})_{L}$ is any degree-zero divisor,  then  for all sufficiently large $s$ and all $a$,
\beq 
\lb{eq up-1}
 m_{\baar{w}}( \wtil{Z}_{a\vpi^{m(s+1)}}(\phi^{\infty}) [1]_{U}, D) - m_{\baar{w}}( \wtil{Z}_{a\vpi^{ms}}(\phi^{\infty}) [1]_{U}, D) 
=O(q_{F,v}^{ms}) 
\eeq
in $L$, where the implied constant can be fixed independently of $a$ and $s$.
\end{coro}
\begin{proof} Let $\widehat{D}$ be a flat extension of $D$ to a divisor on $\X$ (with coefficients in $L$),
and abbreviate $Z_{a,s}:=  \wtil{Z}_{a\vpi^{ms}}(\phi^{\infty}) [1]_{U}$. Then  by the corrected Proposition 4.3.3,
\beq 
\lb{eq up-12}
m_{\baar{w}}( Z_{a,s}, D)= (\baar{Z}_{a,s}\cd \widehat{D}) = A_{s} q_{F,v}^{ms}+ \sum_{i} \lm_{i,s}(V_{i,s} \cdot D'_{s})
\eeq
for some vertical components $V_{i,s}\subset \X $ and some  $A_{s}$, $\lm_{i,s}\in L$; here we have written $\widehat{D}=c_{s}\X_{\kappa}+ D'_{s}$ where $D'_{s} $ is a divisor whose support does not contain $V_{s}$.   By   Proposition \ref{up-1} (transported  by  Hecke correspondences away from $v$), all terms indexed by $s$ are in fact eventually independent of $s$; thus the second term of \eqref{eq up-12} gives vanishing contribution to \eqref{eq up-1}. As remarked in Corollary \ref{cococo}, the constant $A=A_{s}$ is independent of $a$ as well.
\end{proof}

Then the argument of the proof of Proposition \ref{is crit} at the very end of the paper  goes through to prove Proposition \ref{is crit corr}, with the following modifications: we apply the operator $(\cR_{v}^{\rm seq}\star)_{s}:= \star_{m(s+1)}- \star_{ms}$ to  \eqref{longg} and \eqref{pollo} (each viewed as a sequence $\star$ in $s$), and use Corollary \ref{cococo corr} instead of Corollary \ref{cococo}.
 \medskip

 \textbf{A further erratum to \cite{1}}

In Lemma 8.2.1 and in the Proof of Proposition 8.2.2, one should read `$F^\times \mathbf{A}^{S_1 \infty, \times}$' in place of $\mathbf{A}^{S_1 \infty, \times}$. (I am grateful to Yangyu Fan for pointing this out.)

 \medskip
 \emph{{\bf References} for Appendix C}
 
{\small [Ram] \quad {Dinakar Ramakrishnan}, \emph{A theorem on $GL(n)$ \`a la Tchebotarev}, preprint.}

\bigskip

\bigskip
\bigskip
\bigskip
\bigskip
\bigskip

\backmatter
\addtocontents{toc}{\medskip}

\begin{bibdiv}
\begin{biblist}
\bib{1}{article}{
author={Disegni, Daniel}, title={The $p$-adic Gross--Zagier formula on Shimura curves}, 	journal={Compos. Math.}, volume={153}, number={10}, date={2017}, pages={1987--2074}, label={I}
}

\bigskip

\bib{st et}{article}{
   author={Barr{\'e}-Sirieix, Katia},
   author={Diaz, Guy},
   author={Gramain, Fran{\c{c}}ois},
   author={Philibert, Georges},
   title={Une preuve de la conjecture de Mahler-Manin},
   language={French},
   journal={Invent. Math.},
   volume={124},
   date={1996},
   number={1-3},
   pages={1--9},
   issn={0020-9910},
   review={\MR{1369409 (96j:11103)}},
   doi={10.1007/s002220050044},
}

\bib{bdsurvey}{article}{
   author={Bertolini, Massimo},
   author={Darmon, Henri},
   title={The $p$-adic $L$-functions of modular elliptic curves},
   conference={
      title={Mathematics unlimited---2001 and beyond},
   },
   book={
      publisher={Springer, Berlin},
   },
   date={2001},
   pages={109--170},
   review={\MR{1852156 (2002i:11061)}},
}

\bib{bh}{book}{
   author={Bushnell, Colin J.},
   author={Henniart, Guy},
   title={The local Langlands conjecture for $\rm GL(2)$},
   series={Grundlehren der Mathematischen Wissenschaften [Fundamental
   Principles of Mathematical Sciences]},
   volume={335},
   publisher={Springer-Verlag},
   place={Berlin},
   date={2006},
   pages={xii+347},
   isbn={978-3-540-31486-8},
   isbn={3-540-31486-5},
   review={\MR{2234120 (2007m:22013)}},
   doi={10.1007/3-540-31511-X}, label={BH06}
}

\bib{BPS}{article}{author={B\"uy\"ukboduk, Kazim}, author={Pollack, Robert}, author={Sasaki, Shu}, title={$p$-adic Gross-Zagier formula at critical slope and a conjecture of Perrin-Riou}, status={preprint}, label={BPS}}

\bib{CST}{article}{
   author={Cai, Li},
   author={Shu, Jie},
   author={Tian, Ye},
   title={Explicit Gross-Zagier and Waldspurger formulae},
   journal={Algebra Number Theory},
   volume={8},
   date={2014},
   number={10},
   pages={2523--2572},
   issn={1937-0652},
   review={\MR{3298547}},
   doi={10.2140/ant.2014.8.2523},   label={CST14}
}

\bib{carayol}{article}{
   author={Carayol, Henri},
   title={Sur la mauvaise r\'eduction des courbes de Shimura},
   language={French},
   journal={Compositio Math.},
   volume={59},
   date={1986},
   number={2},
   pages={151--230},
   issn={0010-437X},
   review={\MR{860139 (88a:11058)}},
}

\bib{coates}{article}{
   author={Coates, John},
   title={Motivic $p$-adic $L$-functions},
   conference={
      title={$L$-functions and arithmetic},
      address={Durham},
      date={1989},
   },
   book={
      series={London Math. Soc. Lecture Note Ser.},
      volume={153},
      publisher={Cambridge Univ. Press, Cambridge},
   },
   date={1991},
   pages={141--172},
   review={\MR{1110392}},
}

\bib{dd-ant}{article}{
   author={Disegni, Daniel},
   title={$p$-adic heights of Heegner points on Shimura curves},
   journal={Algebra Number Theory},
   volume={9},
   date={2015},
   number={7},
   pages={1571--1646},
   issn={1937-0652},
   review={\MR{3404649}},
   doi={10.2140/ant.2015.9.1571},
}

\bib{exc}{article}{
   author={Disegni, Daniel},
   title={On the $p$-adic Birch and Swinnerton-Dyer conjecture for elliptic
   curves over number fields},
   journal={Kyoto J. Math.},
   volume={60},
   date={2020},
   number={2},
   pages={473--510},
   issn={2156-2261},
   review={\MR{4094741}},
   doi={10.1215/21562261-2018-0012},
}

\bib{equi}{article}{author={Disegni, Daniel}, title={$p$-adic equidistribution of CM points},
status={preprint available at \url{http://www.math.bgu.ac.il/~disegni}}, label={Dis/a}}

\bib{dd-pLf}{article}{author={Disegni, Daniel}, title={$p$-adic $L$-functions for $\GL_{2}\ts {\rm GU}(1)$}, status={preprint available at \url{http://www.math.bgu.ac.il/~disegni}}, label={Dis/b}, }

\bib{univ}{article}{author={Disegni, Daniel}, title={The universal $p$-adic Gross--Zagier formula},
status={preprint available at \url{http://www.math.bgu.ac.il/~disegni}},  label={Dis/c}}

\bib{fontaine}{article}{
   author={Fontaine, Jean-Marc},
   title={Repr\'{e}sentations $l$-adiques potentiellement semi-stables},
   language={French},
   note={P\'{e}riodes $p$-adiques (Bures-sur-Yvette, 1988)},
   journal={Ast\'{e}risque},
   number={223},
   date={1994},
   pages={321--347},
   issn={0303-1179},
   review={\MR{1293977}},
}

\bib{ultrametric}{article}{
author=
{Garc\'{\i}a Barroso, Evelia R.}, author={Gonz\'{a}lez P\'{e}rez, Pedro D.}, author= {              Popescu-Pampu, Patrick},
     title = {Ultrametric spaces of branches on arborescent singularities},
 conference = {title={Singularities, algebraic geometry, commutative algebra, and
              related topics}},
 publisher = {Springer}, place={ Cham},
      date = {2018},     pages = {55--106},
review = {\MR{3839791}},
}

\bib{GS}{article}{
   author={Greenberg, Ralph},
   author={Stevens, Glenn},
   title={$p$-adic $L$-functions and $p$-adic periods of modular forms},
   journal={Invent. Math.},
   volume={111},
   date={1993},
   number={2},
   pages={407--447},
   issn={0020-9910},
   review={\MR{1198816 (93m:11054)}},
   doi={10.1007/BF01231294},
}

\bib{gz}{article}{
   author={Gross, Benedict H.},
   author={Zagier, Don B.},
   title={Heegner points and derivatives of $L$-series},
   journal={Invent. Math.},
   volume={84},
   date={1986},
   number={2},
   pages={225--320},
   issn={0020-9910},
   review={\MR{833192 (87j:11057)}},
   doi={10.1007/BF01388809},  
}

\bib{gross}{article}{
   author={Gross, Benedict H.},
   title={On canonical and quasicanonical liftings},
   journal={Invent. Math.},
   volume={84},
   date={1986},
   number={2},
   pages={321--326},
   issn={0020-9910},
   review={\MR{833193 (87g:14051)}},
   doi={10.1007/BF01388810},
}

\bib{kobayashi}{article}{
   author={Kobayashi, Shinichi},
   title={The $p$-adic Gross-Zagier formula for elliptic curves at
   supersingular primes},
   journal={Invent. Math.},
   volume={191},
   date={2013},
   number={3},
   pages={527--629},
   issn={0020-9910},
   review={\MR{3020170}},
   doi={10.1007/s00222-012-0400-9},
}

\bib{qliu}{book}{
   author={Liu, Qing},
   title={Algebraic Geometry and Arithmetic Curves},
   series={Oxford Graduate Texts in Mathematics},
   volume={6},
   publisher={Oxford University Press},
   place={Oxford},
   date={2002},
   pages={576},
   isbn={0-19-850284-2 },
}

\bib{MTT}{article}{
   author={Mazur, B.},
   author={Tate, J.},
   author={Teitelbaum, J.},
   title={On $p$-adic analogues of the conjectures of Birch and
   Swinnerton-Dyer},
   journal={Invent. Math.},
   volume={84},
   date={1986},
   number={1},
   pages={1--48},
   issn={0020-9910},
   review={\MR{830037 (87e:11076)}},
   doi={10.1007/BF01388731},
}

\bib{mokrane}{article}{
   author={Mokrane, A.},
   title={La suite spectrale des poids en cohomologie de Hyodo-Kato},
   language={French},
   journal={Duke Math. J.},
   volume={72},
   date={1993},
   number={2},
   pages={301--337},
   issn={0012-7094},
   review={\MR{1248675 (95a:14022)}},
   doi={10.1215/S0012-7094-93-07211-0},
}
 
\bib{nekheights}{article}{
   author={Nekov{\'a}{\v{r}}, Jan},
   title={On $p$-adic height pairings},
   conference={
      title={S\'eminaire de Th\'eorie des Nombres, Paris, 1990--91},
   },
   book={
      series={Progr. Math.},
      volume={108},
      publisher={Birkh\"auser Boston},
      place={Boston, MA},
   },
   date={1993},
   pages={127--202},
   review={\MR{1263527 (95j:11050)}},
}

\bib{nekovar}{article}{
   author={Nekov{\'a}{\v{r}}, Jan},
   title={On the $p$-adic height of Heegner cycles},
   journal={Math. Ann.},
   volume={302},
   date={1995},
   number={4},
   pages={609--686},
   issn={0025-5831},
   review={\MR{1343644 (96f:11073)}},
   doi={10.1007/BF01444511},
}

\bib{nek-selmer}{article}{
   author={Nekov{\'a}{\v{r}}, Jan},
   title={Selmer complexes},
   language={English, with English and French summaries},
   journal={Ast\'erisque},
   number={310},
   date={2006},
   pages={viii+559},
   issn={0303-1179},
   isbn={978-2-85629-226-6},
   review={\MR{2333680 (2009c:11176)}},
}

\bib{PR}{article}{
   author={Perrin-Riou, Bernadette},
   title={Points de Heegner et d\'eriv\'ees de fonctions $L$ $p$-adiques},
   language={French},
   journal={Invent. Math.},
   volume={89},
   date={1987},
   number={3},
   pages={455--510},
   issn={0020-9910},
   review={\MR{903381 (89d:11034)}},
   doi={10.1007/BF01388982},
}

\bib{ari}{article}{
   author={Shnidman, Ariel},
   title={$_p$-adic heights of generalized Heegner cycles},
   language={English, with English and French summaries},
   journal={Ann. Inst. Fourier (Grenoble)},
   volume={66},
   date={2016},
   number={3},
   pages={1117--1174},
   issn={0373-0956},
   review={\MR{3494168}},
}
	
\bib{tate-nt}{article}{author= {Tate, John},
     title = {Number theoretic background},
 conference = {title={Automorphic forms, representations and {$L$}-functions}, place= { {C}orvallis, {O}re.,},year={1977},}
       book={
      series={Proc. Sympos. Pure Math.}
      volume={XXXIII},
      publisher={Amer. Math. Soc.},
      place={Providence, R.I.},
   },
   year={1979}
     pages = {3--26},
}

\bib{yzz}{book}{
     title = {The Gross-Zagier Formula on Shimura Curves},  
     subtitle = {},     
     edition = {},       
     author = {Yuan, Xinyi},author = {Zhang, Shou-Wu},author = {Zhang, Wei},
     editor = {},     
     volume = {184},     
     series = {Annals of Mathematics Studies},  
     pages = {272},         
     place={Princeton, NJ},
     date = {2012},      
     publisher = {Princeton University Press},         
     }

\end{biblist}
\end{bibdiv}

\end{document}